\documentclass[11pt]{amsart}

\usepackage{graphicx} 
\usepackage[english]{babel}     
\usepackage[utf8]{inputenc}
\usepackage{amsmath}
\usepackage{amsfonts}
\usepackage{amssymb}
\usepackage{hyperref}
\usepackage{xcolor}
\usepackage{comment}
\usepackage{enumerate}
\usepackage{tikz-cd}
\usepackage{doi}
\usepackage[normalem]{ulem}
\usepackage{graphicx}

\usepackage{todonotes}
\setlength {\marginparwidth}{2cm}

%
%
\usepackage{comment}
\specialcomment{auxproof}
{\mbox{}\newline\textbf{BEGIN: AUX-PROOF}\dotfill\newline}
{\mbox{}\newline\textbf{END: AUX-PROOF}\dotfill\newline}

\textwidth 6.4truein
\textheight 8.5truein
\oddsidemargin 0pt
\evensidemargin 0pt
\pagestyle{myheadings}
\overfullrule=0pt

\usepackage{amsthm}
\newtheorem{theorem}{Theorem}[section]
\newtheorem{theoremx}{Theorem}

\newtheorem{lemma}[theorem]{Lemma}
\newtheorem{proposition}[theorem]{Proposition}
\newtheorem{corollary}[theorem]{Corollary}

\theoremstyle{definition}
\newtheorem{definition}[theorem]{Definition}
\newtheorem{example}[theorem]{Example}
\newtheorem{remark}[theorem]{Remark}

\let\emptyset\varnothing

\newcommand{\sign}{{\rm sign}}

\newcommand{\Tr}{\textrm{Tr}}
\newcommand{\TripleVert}{|\!|\!|}
\newcommand{\frakm}{\mathfrak{m}}
\newcommand{\HMS}{\textrm{HMS}}
\newcommand{\Sphere}{\mathbb{S}}
\newcommand{\NNZero}{\mathbb{N}_{\geq 0}}
\newcommand{\NNOne}{\mathbb{N}_{\geq 1}}

\newcommand{\supp}{{\rm supp}}
\newcommand{\cS}{\mathcal{S}}
\newcommand\numberthis{\addtocounter{equation}{1}\tag{\theequation}}
\title[Best constants of Schur multipliers of higher order divided differences]{On the best constants of Schur multipliers of higher order divided difference functions}
\author{Martijn Caspers}
\author{Jesse Reimann}

\address{TU Delft, EWI/DIAM,
	P.O.Box 5031,
	2600 GA Delft,
	The Netherlands}

\email{M.P.T.Caspers@tudelft.nl}

\email{J.Reimann@tudelft.nl}

\date{\today}

\begin{document}

\begin{abstract}
Let $f \in C^n(\mathbb{R})$ be such that $\Vert f^{(n)} \Vert_\infty < \infty$. Let $f^{[n]} \in C(\mathbb{R}^{n+1})$ be the $n$th order divided difference. A special case of our main result states that for $1 < p < \infty$ we have
\[
\Vert T_{f^{[n]}}: S_{np} \times \ldots \times S_{np} \rightarrow S_{p} \Vert \lesssim p^\ast p^n \Vert f^{(n)} \Vert_\infty,
\]
where $p^\ast = p/(p-1)$ is the H\"older conjugate of $p$ and $T_{f^{[n]}}$ is the multilinear Schur multiplier with symbol $f^{[n]}$.  
In case of the generalized absolute value map $f(\lambda) = \lambda^{n-1} \vert \lambda \vert, \lambda \in \mathbb{R}$, we show that 
\[
p^\ast p^{n}  \lesssim \Vert T_{f^{[n]}}: S_{np} \times \ldots \times S_{np} \rightarrow S_{p} \Vert.
\]
This provides an alternative proof to one of the key theorems in  the solution of Koplienko’s problem on higher order spectral shift [Invent.\ Math.\ 193, No.\ 3, 501--538 (2013)], which is moreover sharp as $p \searrow 1$   and as $p \rightarrow \infty$, for any $n$.
\end{abstract}

\maketitle

\section{Introduction}

The theory of \emph{spectral shift functions}, introduced by Lifschitz~\cite{Lifschitz} and subsequently developed by Krein~\cite{Krein1,Krein2}, concerns functions~$\xi_{H_0,V}$ such that \begin{equation*}
    \mathrm{Tr}(f(H_0+V)-f(H_0))=\int_{\mathbb{R}}f'(t)\xi_{H_0,V}(t)dt
\end{equation*} for suitable functions $f$, self-adjoint operators $H_0$, and perturbations $V$. This theory has found applications in a wide range of areas, such as spectral theory, perturbation theory, mathematical physics, noncommutative geometry, and many more.  See~\cite{Gesztesy, SkripkaSurvey} for a historical overview and~\cite{BirmanH}, \cite[Section 5.5]{SkripkaTomskova} for a survey and applications of spectral shift. In~\cite{kopl_trace}, Koplienko conjectured the existence of \emph{higher order spectral shift functions}, i.e.\ integrable functions~$\eta_{n, H_0,V}$ such that \begin{equation}\label{eqn: koplienko}
    \mathrm{Tr}\left(f(H_0+V)-\sum_{k=0}^{n-1}\frac{1}{k!}\frac{d^k}{dt^k}f(H_0+tV)\Bigg|_{t=0}\right)=\int_{\mathbb{R}}f^{(n)}(t)\eta_{n,H_0,V}(t)dt
\end{equation}
is well-defined for sufficiently nice $f$, $H_0$ self-adjoint, and $V$ in the $n$th Schatten class $S_n$, $n\ge 2$. 
\vspace{0.3cm}

After partial results for $n=2$~\cite{kopl_trace}, and for general $n\in\mathbb{N}$ if $V$ is a Hilbert-Schmidt operator (i.e.\ if $V\in S_2$)~\cite{dykema_skripka_jfa, skripka_indiana, skripka_illinois}, the full conjecture was resolved affirmatively in 2013 by Potapov, Skripka, and Sukochev~\cite{PSS-Inventiones}. Key to their proof is expressing the left-hand side of~\eqref{eqn: koplienko} as a \emph{multiple operator integral}, after which the existence of the spectral shift function follows from a Riesz representation theorem argument. Problems concerning boundedness of multiple operator integrals can be translated to  boundedness problems of closely related {\it multilinear Schur multipliers}. No general theorems concerning boundedness of multilinear Schur multipliers are available and therefore such problems can become notoriously difficult. The key result in the main body of~\cite{PSS-Inventiones} is boundedness of the relevant multilinear Schur multipliers of higher order \emph{divided differences}
\begin{align*}
    f^{[1]}(\lambda_0,\lambda_1)&=\begin{cases}
        \frac{f(\lambda_0)-f(\lambda_1)}{\lambda_0-\lambda_1},&\lambda_0\neq \lambda_1,\\
        f'(\lambda_0),&\lambda_0=\lambda_1,
    \end{cases},\\
    f^{[n]}(\lambda_0,\dotsc,\lambda_n)&=\begin{cases}
        \frac{f^{[n-1]}(\lambda_0,\dotsc,\lambda_{n-1})-f^{[n-1]}(\lambda_1,\dotsc,\lambda_n)}{\lambda_0-\lambda_n},&\lambda_0\neq\lambda_n, \\
        \frac{1}{n!} f^{(n)}(\lambda_0),&\lambda_0=\dotsc=\lambda_n.
    \end{cases}
\end{align*}
{Boundedness of these multilinear Schur multipliers is shown by a deep iterative decomposition into linear Schur multipliers with symbols of a particular type, namely \emph{Toeplitz-form} (i.e. Schur multipliers with symbols of the form $m(\lambda,\mu)=\tilde{m}(\lambda-\mu)$)  and {first order} \emph{polynomial integral momenta}, of which first order divided differences are a special case. The boundedness of these linear operators implies the boundedness of the multilinear Schur multiplier of divided differences and hence their main theorem.
} 

\vspace{0.3cm}

In the first order case, significant interest was shown in the optimal growth rate of the boundedness constants of the linear Schur multiplier of first order divided differences of Lipschitz functions~$f$, acting on Schatten classes $S_p$. Indeed, when boundedness of this Schur multiplier was shown in~\cite{PoSu11} for $1<p<\infty$, it was already known that these operators are unbounded on~$S_1$ and $S_{\infty}$~\cite{Farforovskaya1,Farforovskaya2,Farforovskaya3}, disproving a conjecture by Krein~\cite{KreinConjecture}. This inspired the search for optimal growth rates of the boundedness constant in $p$ as $p\searrow 1$ and $p\to\infty$, which were {eventually}  found in~\cite{CMPS,CJSZ,CPSZ}, including endpoint estimates in~$\mathrm{BMO}$ and $S_{1,\infty}$. In particular, the endpoint estimate in $S_{1, \infty}$ confirmed  the Nazarov-Peller conjecture~\cite{NazarovPeller}.  
\vspace{0.3cm}

In the higher order case it was shown in~\cite{CLPST,PSST17} that multilinear Schur multipliers of divided differences generally do not map boundedly to $S_1$ under the minimal regularity assumptions we use in the current paper, where the counter example is a modification of the generalized absolute value function that we define in Section~\ref{Sect=LowerBounds}. For this specific function the endpoint estimate in $S_{1, \infty}$ was obtained in~\cite{CSZ-Israel}. We also mention that by imposing stricter regularity assumptions on~$f$, such as Besov regularity, bounds on multilinear Schur multipliers were obtained in~\cite{CaspersHuisman} when the recipient space is $S_{1}$ or even a quasi-Banach $S_p$-space $0< p < 1$. The latter result builds on earlier contributions by Peller \cite{Peller} and McDonald-Sukochev \cite{McDonaldSukochev} in the linear case.

\vspace{0.3cm}

Recent progress in the study of Schur multipliers (see Parcet's survey at the {ICM 2026}~\cite{ParcetICM}) has motivated a re-examination of the boundedness proof of the multilinear Schur multiplier of higher order divided differences in~\cite{PSS-Inventiones}, with the goal of obtaining optimal growth rates as~$p\searrow 1$ or $p\to \infty$ in the  recipient space  $S_p$. Recent results such as the H\"ormander-Mikhlin-Schur multiplier theorem~\cite{CGPT} and Marcinkiewicz multiplier theorem~\cite{marcinkiewicz_schur} give new sufficient conditions for the boundedness of a large class of linear Schur multipliers. Moreover, transference methods for multilinear Toeplitz-form Schur multipliers~\cite{CJKM, CKV} allow for the application of results from multilinear harmonic analysis and reduce the number of needed decomposition steps. In~\cite{CaspersReimann}, the bilinear case was studied and the optimal growth rate as~$p\searrow 1$ was found. While the authors were unable to find the optimal growth rate as $p\to\infty$, the use of recent advances in multilinear harmonic analysis yielded a narrow range of possible growth rates, significantly improving the growth rate one obtains by following the proof in~\cite{PSS-Inventiones}.

\vspace{0.3cm}

\noindent {\bf Main result.} In order to state our main result, we first give informal definitions of the central objects appearing in these theorems. For details, see Section~\ref{sect: prelims}.

	 Given a sufficiently smooth function $f$, we can define the $n$-linear \emph{Schur multiplier of its $n$th order divided difference} $T_{f^{[n]}}$, acting on operators in \emph{Schatten classes} $S_p$. These spaces $S_p$ consist of compact operators such that their singular values form an $\ell^p$-sequence, {$1 \leq p<\infty$}. On integral operators with explicit kernels $A_1,\dotsc,A_n$, the Schur multiplier acts by multiplying the kernels of the operator with its \emph{symbol} $f^{[n]}$, yielding an integral operator with kernel \begin{equation*}
T_{f^{[n]}}(A_1, \ldots, A_n)(s_0, s_n) = \int_{\mathbb{R}^{n-1}} f^{[n]}(s_0, \ldots, s_n) A_1(s_0, s_1) \ldots A_n(s_{n-1}, s_n) ds_1 \ldots ds_{n-1}. 
	 \end{equation*} 
     Finally, for $1<p<\infty$ let $p^*:=p/(p-1)$. We are now ready to state our main theorems.
	
	\begin{theoremx}[Upper bound]\label{Thm=TheoremA} 
For every $f \in C^n(\mathbb{R})$ such that $\Vert f^{(n)} \Vert < \infty$ and for every ${1 < p, p_1, \ldots, p_n < \infty}$ with $p^{-1} = \sum_{i=1}^n p_i^{-1}$ we have
\begin{equation*}
 \|T_{f^{[n]}}:S_{p_1}\times\dotsc\times S_{p_n}\to S_p\|   \leq D_n(p,p_1,\dotsc,p_n) \Vert f^{(n)}\Vert_\infty,
\end{equation*}
where $D_n(p,p_1,\dotsc,p_n)<\infty$ is independent of $f$. If $p_1=\dotsc=p_n=np$, then $$D_n(p,np,\dotsc,np)\le C_n p^\ast p^n$$ for some $C_n<\infty$ independent of $p$. 
\end{theoremx}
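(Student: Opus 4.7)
My plan follows the strategy developed for the bilinear case in \cite{CaspersReimann}: combine an integral representation of $f^{[n]}$ with transference of multilinear Toeplitz-form Schur multipliers to multilinear Fourier multipliers, then invoke recent advances in multilinear harmonic analysis, together with sharp linear Schur multiplier theorems.

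The starting point is the Hermite--Genocchi formula
\begin{equation*}
f^{[n]}(\lambda_0,\ldots,\lambda_n) = \int_{\Sigma_n} f^{(n)}\bigl(t_0\lambda_0+\cdots+t_n\lambda_n\bigr)\,dt,
\end{equation*}
where $\Sigma_n$ denotes the standard $n$-simplex. This yields $T_{f^{[n]}} = \int_{\Sigma_n} T_{\phi_t}\,dt$ with $\phi_t(\lambda):=f^{(n)}(\langle t,\lambda\rangle)$, so Minkowski's integral inequality reduces the theorem to bounding $\|T_{\phi_t}\|_{S_{np}\times\cdots\times S_{np}\to S_p}$ uniformly in $t\in\Sigma_n$. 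Rewriting $\langle t,\lambda\rangle=\lambda_n+\sum_{i<n}t_i(\lambda_i-\lambda_n)$ (or the analogous identity relative to any other vertex of $\Sigma_n$), the symbol $\phi_t$ depends on the variables only through the differences $\lambda_i-\lambda_n$ and an additive shift by $\lambda_n$ inside $f^{(n)}$. This is the structural form that makes the multilinear Toeplitz transference machinery applicable.

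For each fixed $t\in\Sigma_n$, I would then apply the transference theorems for multilinear Toeplitz-form Schur multipliers \cite{CJKM,CKV} to bound $\|T_{\phi_t}\|$ by the norm of the associated multilinear Fourier multiplier on $L^{np}(\mathbb{R})\times\cdots\times L^{np}(\mathbb{R})\to L^p(\mathbb{R})$, whose symbol is $f^{(n)}$ composed with a linear combination of the frequency variables. Since $f^{(n)}\in L^\infty$ is not a priori smooth, I would introduce a dyadic Littlewood--Paley decomposition in frequency; each dyadic piece is a smooth, compactly-supported multiplier that can be handled by a multilinear Hörmander--Mikhlin- or Marcinkiewicz-type theorem \cite{CGPT,marcinkiewicz_schur}, and the pieces are recombined via Littlewood--Paley square function estimates. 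Each Hölder/Toeplitz step is engineered to contribute only a factor of $p$, while the single $p^\ast$ enters exactly once, from dualizing the square function on the recipient space $S_p$.

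The principal obstacle is obtaining the sharp $p^\ast p^n$ growth rather than the cruder $(p^\ast)^n p^n$ that a naive iteration would give: one must arrange the decomposition so that each of the $n$ multilinear Toeplitz/Hölder steps picks up only a factor of $p$ and not $p^\ast$, with $p^\ast$ appearing only at the very end. This is the direct analogue of the delicate sharp-growth issue already resolved in the linear case \cite{CMPS,CJSZ,CPSZ} and in the bilinear case of \cite{CaspersReimann}, and generalizing the resolution to arbitrary $n$ is what I expect to constitute the main technical content of the proof.
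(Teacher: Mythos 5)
Your proposal takes the transference-to-Fourier-multipliers route, which is precisely the route the paper deliberately abandons. The introduction explains that in the bilinear case $n=2$, exactly this approach (transference via \cite{CJKM,CKV} plus multilinear harmonic analysis near the coordinate axes) was used in \cite{CaspersReimann}, and it was \emph{not} able to produce the sharp $p^2$ growth as $p\to\infty$ --- only a ``narrow range of possible growth rates.'' The paper's own proof explicitly avoids the transference step, instead decomposing $f^{[n]}$ via the reduction formula of Lemma~\ref{lemma: reduction formula divdiff} into products of Toeplitz-form homogeneous functions and two-variable divided differences $f[\lambda_{F_n^-}^{(\alpha_-)},\lambda_{F_n}^{(\alpha_+)}]$, using a partition of unity indexed by choice sequences $\mathcal{F}_{n,n-1}$, then expanding the homogeneous Toeplitz part via Proposition~\ref{Prop=FourierTypeDecomposition} and applying the linear H\"ormander--Mikhlin--Schur theorem $n$ times with carefully tracked partial H\"older exponents $p_{(i;j)}$. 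The sharpness comes from the observation (end of the proof of Theorem~\ref{Thm=MainTheorem}) that among the $n$ H\"older exponents $p_{(F_l^-;F_l)}$ only the one with $l=n$ can equal $p$ itself, so only that one can contribute $p^\ast$ as $p\searrow 1$.

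There are also two technical problems with the plan as written. First, the symbol $\phi_t(\lambda)=f^{(n)}(\langle t,\lambda\rangle)$ coming from the Hermite--Genocchi formula with $t\in\Sigma_n$ (so $\sum_i t_i=1$) is \emph{not} of Toeplitz form: $\phi_t(\lambda+c\One)=f^{(n)}(\langle t,\lambda\rangle+c)\neq\phi_t(\lambda)$ in general. The transference theorems in \cite{CJKM,CKV} require the Schur symbol to be a function of the coordinate differences alone; they do not apply to a family of shifts of $f^{(n)}$. You observe the shift by $\lambda_n$ inside $f^{(n)}$ and then assert this is exactly the structure that makes transference applicable, but the opposite is true: the shift is what breaks Toeplitz invariance. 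Second, and more fundamentally, you identify the sharpening from $(p^\ast)^n p^n$ to $p^\ast p^n$ as ``the principal obstacle'' and then simply defer it as ``the main technical content of the proof.'' That obstacle \emph{is} the theorem; the paper's whole apparatus of choice sequences, matched partitions of unity, and the careful bookkeeping of which partial H\"older exponent appears in each of the $n$ applications of Theorem~\ref{Thm=HMS} exists precisely to beat it. A Littlewood--Paley decomposition recombined by square function duality does not obviously localize the $p^\ast$ factor to a single step; indeed there is no mechanism in your proposal that prevents each of the $n$ linear stages from contributing its own $p^\ast$.
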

In the linear case this bound has first been obtained in~\cite{CMPS}, after which alternative proofs were found in~\cite{CPSZ,CJSZ,CGPT}, see also \cite{RieszSchur}. For $n\ge2$, Theorem~\ref{Thm=TheoremA} is new.
{The general explicit description of $D_n(p,p_1,\dotsc,p_n)$ is given in the statement of Theorem~\ref{Thm=MainTheorem}, where~$D_n(p,p_1,\dotsc,p_n)$ is a product of $n$ exponents and each individual exponent is a partial H\"older combination of $p_1, \ldots, p_n$.} 
Combined with Theorem~\ref{Thm=TheoremB} below, we see that the asymptotic behaviour of this upper bound is optimal as $p\searrow 1$ and as~$p\to\infty$ for any $n\in\mathbb{N}$.
\vspace{0.3cm}

Similar to~\cite{PSS-Inventiones,CaspersReimann}, at the heart of our proof is the repeated application of the decomposition formula
\begin{equation}\label{eqn: intro_insert_xi}
        f^{[n]}(\lambda_0,\dotsc,\lambda_n) = \frac{\lambda_i-\lambda_k}{\lambda_i-\lambda_j}f^{[n]}(\lambda_0, \dotsc,\lambda_i,\lambda_k,\lambda_k, \dotsc, \lambda_n) 
         + \frac{\lambda_k-\lambda_j}{\lambda_i-\lambda_j}f^{[n]}(\lambda_0, \dotsc,\lambda_k,\lambda_k,{\lambda_j},\dotsc, \lambda_n), 
\end{equation}
where we stress that divided differences are invariant under permutation of variables, and hence this formula holds regardless of the choice of $i\neq j$ and $k$. By repeatedly applying  this formula,  an $n$th order divided difference in $n+1$ variables can be decomposed into $n$th order divided differences with repeating variables, in particular into those only depending on two pairwise distinct variables. This allows us to treat them as the symbol of a \emph{linear} Schur multiplier, which we can estimate using the H\"ormander-Mikhlin-Schur theorem of~\cite{CGPT}.
\vspace{0.3cm}

In contrast to~\cite{CaspersReimann} however, we avoid the use of multilinear transference techniques to bound the Schur multipliers associated with the fractions in~\eqref{eqn: intro_insert_xi}, which allows us to obtain sharper bounds. Instead, we construct a partition of unity on $\mathbb{R}^{n+1}\setminus\{0\}$ such that after repeatedly applying~\eqref{eqn: intro_insert_xi} we ensure that i) {all fractions of differences in the coordinates of $\lambda$ are bounded on the support of a partition of unity function,} ii) the obtained terms act as symbols of a well-defined composition of \emph{linear} Schur multipliers that are properly nested in the sense that we can apply the reduction of \cite[Lemma 3.2 (iii)]{PSS-Inventiones}, and iii) the variables in the numerator of one of the fractions agree with the remaining two distinct variables in the divided difference (see Remark~\ref{Rmk=Matching}). As this construction is highly dependent on the choice of $i,j,k$ each time~\eqref{eqn: intro_insert_xi} is applied, we formalise all allowed choices as a collection of suitable sequences and let our partition of unity to be indexed by these choice sequences.
\vspace{0.3cm}

In order to decompose the multilinear Schur multiplier with symbol given by a product of fractions obtained from~\eqref{eqn: intro_insert_xi}, we prove a multilinear generalisation of~\cite[Lemma 6]{PoSu11}, which states that for $0<\lambda<2\mu$ one has
\begin{equation}\label{Eqn=TheoryOfEverything} 
    \frac{\lambda}{\mu}=\int_{\mathbb{R}}g(s)\lambda^{is}\mu^{-is}ds
\end{equation}
for some suitable Schwartz function $g$. Using similar methods, we show that if a smooth homogeneous function $\phi:\mathbb{R}^{n}\to\mathbb{C}$ is supported away from the coordinate axes $\{\xi \in \mathbb{R}^n \mid \xi_k=0\}$, $k=1,\dotsc,n-1$, and vanishes if $\xi_n=0$, then  we have for $\xi_1, \ldots, \xi_n \not =0$,  
\begin{multline*}
    \phi(\xi_1,\dotsc,\xi_n)
    =
     \sum_{ \epsilon \in \{ 0, 1 \}^n  } \left( \prod_{i=1}^n {\rm sign}( \xi_i )^{\epsilon_i}  \right)
    \int_{\mathbb{R}^{n-1}}
     g_{\epsilon}(s_1, \ldots,  s_{n-1})\\  \times \vert \xi_1 \vert^{ -is_1} \vert \xi_2 \vert^{ i(s_1 - s_2)}  \ldots   \vert \xi_{n-1} \vert^{i(s_{n-2}-s_{n-1})}  \vert \xi_n \vert^{ is_{n-1}}    ds_1 \ldots  ds_{n-1}  
\end{multline*}
for suitable Schwartz functions $g_\epsilon$.  What finally allows us to prove Theorem~\ref{Thm=TheoremA} with only $n$ applications of the H\"ormander-Mikhlin-Schur theorem is the fact that our construction ensures that a term of the form $(\lambda,\mu)\mapsto |\lambda-\mu|^{is}f^{[n]}(\lambda,\dotsc,\lambda,\mu,\dotsc,\mu)$ always appears in our decomposition after applying the formula above.  Moreover, note that the fact that the closed support of $\phi$ may intersect one coordinate axis is what allows us to partition all of $\mathbb{R}^{n+1}\setminus\{0\}$ into subsets on which we can apply this method. In~\cite{CaspersReimann} this was not available, hence a different proof method (yielding less sharp upper bounds) using the theory of multilinear Calder\'on-Zygmund operators was used around the coordinate axes.
\vspace{0.3cm}

{Combining all decompositions of functions above we are able to write our multilinear Schur multiplier of divided differences as a sum of compositions of $n$ different linear Schur multipliers which satisfy the criteria of the H\"ormander-Mikhlin-Schur multiplier theorem~\cite{CGPT}. This then allows us to conclude Theorem~\ref{Thm=TheoremA}.}

\vspace{0.3cm}

 To give a lower bound on the asymptotics of the operator norm of the Schur multiplier of divided differences as $p\searrow 1$ and $p\to\infty$, we focus on the special case of generalised absolute value functions $a_n(s):=s|s|^{n-1}$ and $p_1=\dotsc=p_n=np$, $1<p<\infty$. Whereas the cases $n=1,2$ were already proven in~\cite{CMPS,CaspersReimann}, the following result is new if $n\ge 3$.

\begin{theoremx}[Lower bound]\label{Thm=TheoremB} There exists a constant $C_n>0$ such that for all $1<p<\infty$,  \begin{equation*}
    C_n p^*p^{n}\le \|T_{a_n^{[n]}}:S_{np}\times \dotsc\times S_{np}\to S_p\|.
\end{equation*}
\end{theoremx}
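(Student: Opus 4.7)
The cases $n = 1, 2$ are already known by \cite{CMPS,CaspersReimann}, so the plan focuses on $n \geq 3$. The strategy is to reduce to the $n = 2$ case by a test-operator construction that collapses the last $n - 2$ slots of the multilinear Schur multiplier onto a single spectral value. First I would work on $\mathcal{H} = \mathcal{H}_0 \oplus \mathcal{H}_1$ with both $\mathcal{H}_i$ of dimension $r$, fix a background self-adjoint operator with spectrum $\{\lambda_k\}$ on $\mathcal{H}_0$ and $\{\mu_k\}$ on $\mathcal{H}_1$, and take $A_3 = \dotsc = A_n = P_1$, where $P_1$ is the orthogonal projection onto $\mathcal{H}_1$. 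Choosing $A_1$ to act within $\mathcal{H}_0$ and $A_2$ to map $\mathcal{H}_0 \to \mathcal{H}_1$, the diagonal kernel of $P_1$ forces $i_2 = \dotsc = i_n$, giving
\begin{equation*}
T_{a_n^{[n]}}(A_1, A_2, P_1, \dotsc, P_1)(i_0, i_n) = \sum_{i_1} b_n(\lambda_{i_0}, \lambda_{i_1}, \mu_{i_n}) (A_1)_{i_0 i_1} (A_2)_{i_1 i_n},
\end{equation*}
where $b_n(\lambda_0, \lambda_1, \mu) := a_n^{[n]}(\lambda_0, \lambda_1, \mu, \dotsc, \mu)$ with $\mu$ repeated $n - 1$ times.

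The next step is to prove that the resulting bilinear Schur multiplier $T_{b_n} \colon S_{2p} \times S_{2p} \to S_p$ satisfies $\Vert T_{b_n} \Vert \gtrsim p^\ast p^2$. Using the Hermite--Genocchi formula and $a_n^{(n)} = n!\,\mathrm{sign}$, the symbol admits the representation
\begin{equation*}
b_n(\lambda_0, \lambda_1, \mu) = \frac{n!}{(n-2)!} \int_{\Delta_2} s^{n-2}\,\mathrm{sign}(t_0 \lambda_0 + t_1 \lambda_1 + s \mu)\, dt_0\, dt_1\, ds,
\end{equation*}
which is degree-zero homogeneous and smooth away from the origin, sharing with $a_2^{[2]}$ the sign-type jump at $(0,0,0)$. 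Adapting the test operators from \cite{CaspersReimann} that exploit precisely this sign singularity, I would construct rank-$r$ operators $A_1, A_2$ attaining the $p^\ast p^2$ lower bound for $T_{b_n}$.

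The final step is Schatten bookkeeping. For $A_1, A_2$ with roughly uniform singular-value distributions (for example partial isometries of rank $r$) and $P_1$ a rank-$r$ projection, we have
\begin{equation*}
\Vert A_1 \Vert_{S_{np}} \Vert A_2 \Vert_{S_{np}} \Vert P_1 \Vert_{S_{np}}^{n-2} = r^{1/p} = \Vert A_1 \Vert_{S_{2p}} \Vert A_2 \Vert_{S_{2p}},
\end{equation*}
so the bilinear $p^\ast p^2$ lower bound for $T_{b_n}$ transfers directly to a $p^\ast p^2$ lower bound for $\Vert T_{a_n^{[n]}} \colon S_{np} \times \dotsc \times S_{np} \to S_p \Vert$. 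The main obstacle will be the analysis of $b_n$: the weight $s^{n-2}$ distinguishes $b_n$ from $a_2^{[2]}$, and one must verify that the sign singularity at the origin survives this averaging strongly enough to produce the full $p^\ast p^2$ blow-up both as $p \searrow 1$ and as $p \to \infty$.
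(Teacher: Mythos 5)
Your reduction identity is correct: with disjoint spectra on $\mathcal{H}_0$ and $\mathcal{H}_1$, inserting the projection $P_1$ in slots $3,\dotsc,n$ does collapse the sum to $s_2=\dotsc=s_n$, and the rank-$r$ partial-isometry bookkeeping $r^{2/(np)}\cdot r^{(n-2)/(np)}=r^{1/p}=r^{2/(2p)}$ is also correct. This is a genuinely different route from the paper, which never reduces to a bilinear multiplier: it works directly with the $n$-linear multiplier, evaluates $a_n^{[n]}$ on geometric lattices $(q^{ki_0},-q^{ki_1},q^{ki_2},q^{li_3},\dotsc,q^{li_n})$ (and $(q^{ki_0},q^{li_1},\dotsc,q^{li_{n-1}},-q^{ki_n})$ for the $p^\ast$ part), uses a multilinear de Leeuw--type transference to dominate the discrete multiplier by the continuous one, and recovers in the limit the maps $x\mapsto T^-(x_1)T^+(x_2)x_3\cdots x_n$ and $x\mapsto (T^+-T^-)(x_1\cdots x_n)$, to which the Gohberg--Krein bounds apply.

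However, as written your proposal has two genuine gaps. First, the entire analytic content of the theorem is deferred to the unproven claim that $\Vert T_{b_n}:S_{2p}\times S_{2p}\to S_p\Vert\gtrsim p^\ast p^2$ for the averaged symbol $b_n(\lambda_0,\lambda_1,\mu)=a_n^{[n]}(\lambda_0,\lambda_1,\mu^{(n-1)})$; you flag this yourself as ``the main obstacle''. A generic appeal to the sign singularity at the origin does not suffice: the $p^2$ bound in the bilinear case is obtained by computing explicit limits of the symbol along lattices $(q^{ki_0},-q^{ki_1},q^{ki_2})$ and identifying the limit as the symbol of a \emph{composition} $T^-\circ T^+$ of two triangular truncations, and these limit computations would have to be redone for $b_n$ (they are not the same as the paper's Lemma~\ref{Lem=QLimit}, whose second configuration sends the last $n-2$ variables to $0$ while keeping $\lambda_2$ alive, rather than repeating $\mu$ as in $b_n$). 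Second, and more structurally, your transfer back to the $n$-linear norm requires the bilinear lower bound to be witnessed by operators with flat singular values, so that $\Vert A_i\Vert_{S_{np}}$ and $\Vert A_i\Vert_{S_{2p}}$ differ exactly by the right power of $r$; for non-flat $A_i$ the inequality degrades by a factor $r^{(n-2)/(np)}$. The natural extremizers for triangular truncations (and for the composed map $T^-(x^\ast)T^+(x)$, whose $S_{np/2}$ norm one must realize) are not partial isometries, and you give no argument that flat-spectrum witnesses achieve the $p^\ast p^2$ rate. The paper sidesteps this precisely by \emph{not} freezing slots $3,\dotsc,n$: keeping them free and using $\sup_{\Vert x_i\Vert_{np}=1}\Vert y\,x_3\cdots x_n\Vert_p=\Vert y\Vert_{np/2}$ absorbs the non-flat singular value distribution of $y=T^-(x^\ast)T^+(x)$. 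Until both points are resolved, the argument for $n\ge 3$ is incomplete.
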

As in~\cite{CaspersReimann}, our proof relies on constructions concerning~\emph{triangular truncations}~$T^{\pm}$, i.e.\ linear Schur multipliers with symbol $(s_0,s_1)\mapsto \chi_{>0}(\pm(s_1-s_0))$, where $\chi_{>0}$ denotes the indicator function of the positive real line. It is well-known that their operator norm on Schatten classes~$S_p$, $1<p<\infty$, is bounded from below by $p^*p$, which follows from Krein's analysis of the singular values of the Volterra operator~\cite{GohbergKrein}.

\vspace{0.3cm}

Similar to~\cite[Lemma 10]{Davies}, we exploit the discontinuity of the $n$th derivative of the generalised absolute value function $a_n$ around zero in order to approximate linear combinations and compositions of triangular truncations by Schur multipliers with the symbol $a_n^{[n]}$ restricted to certain lattices in $[-1,1]^{n+1}$. In particular, it is easy to see that $a_n^{[n]}$ is constant on the positive and negative cones of $\mathbb{R}^{n+1}$, making this function suitable for the explicit constructions needed in this proof. Compared to \cite{CaspersReimann}, the limiting case $p \rightarrow \infty$ requires an extra application of Cotlar's identity.

\vspace{0.3cm}

Combining Theorems~\ref{Thm=TheoremA} and~\ref{Thm=TheoremB} yields optimal asymptotics of order $p^*=\frac{p}{p-1}$ as $p\searrow 1$,   and of order $p^n$ as $p\to\infty$.

\vspace{0.3cm}

\noindent {\bf Structure of the paper.} After setting up notation and discussing preliminaries in Section~\ref{sect: prelims}, we turn towards the proof of Theorem~\ref{Thm=TheoremA}. In Section~\ref{Sect=ReductionFormula}, we give an explicit decomposition of higher order divided differences into Toeplitz-form fractions and divided differences in fewer variables, as is required for our proof strategy. In Section~\ref{Sect=Homogeneous}, we prove a generalisation of~\cite[Lemma~6]{PoSu11} to multivariable homogeneous functions. In Section~\ref{Sect=DivDiffDecomposition}, we construct a partition of unity on $\mathbb{R}^n$ that is adapted to both the decomposition of $f^{[n]}$ in Section~\ref{Sect=ReductionFormula} and the boundedness theorem in Section~\ref{Sect=Homogeneous}. In Section~\ref{sect: upper_bound_main_result} we prove Theorem~\ref{Thm=TheoremA}, and in Section~\ref{Sect=LowerBounds} the proof of Theorem~\ref{Thm=TheoremB} is given.

\vspace{0.3cm}

\noindent {\bf Acknowledgements.} We thank Anna Skripka for providing us with further background on Koplienko's problem. We thank Mark Veraar for useful discussions and references. An earlier version of this paper contained the weaker lower bound $p^\ast p^2$ in Theorem B for any $n \geq 2$. We are grateful to Google Gemini for suggesting us to use Cotlar's identity to improve on this bound for $n=3$ which eventually led to Theorem B.

\section{Preliminaries}\label{sect: prelims}

  For the general theory of noncommutative $L^p$-spaces we refer to~\cite{GoldsteinLabuschagne}. For multilinear operator integrals we refer to~\cite{SkripkaTomskova}, which also has a treatment of divided differences and spectral shift.

\subsection{General notation}\label{subsect: notation}
We use the following notational  conventions in this paper.
\begin{itemize}
\item $\vert I \vert$  is the cardinality of a set $I$. 
    \item $\NNZero$ are the natural numbers starting from 0. 
     \item $\mathbb{N}_{\geq 1}$ are the natural numbers starting from 1. 
     \item  $\mathbb{N}_{\geq 0}^{I}$ and $\mathbb{R}^I$ are the vectors with values in $\mathbb{N}_{\ge0}$ (resp.\ $\mathbb{R}$) of length $|I|$ for some (finite) set~$I$, understood to be indexed by the elements of $I$.
    \item A hat above a variable indicates that the  variable is omitted. 
    \item For $p \in (1, \infty)$ we write $p^\ast = \frac{p}{p-1}$ for the conjugate exponent and $p^\sharp = \max(p, p^\ast)$.
    \item   $\{ \lambda = \mu \}$ denotes the set  $\{ (\lambda, \mu) \in \mathbb{R}^2 \mid \lambda   = \mu\}$.
    \item $C_b(\mathbb{R})$ denotes the continuous bounded real valued functions on $\mathbb{R}$.
    \item  $C^{n}(\mathbb{R})$ denotes the $n$ times continuously differentiable complex valued functions.  
    \item $f^{(n)}$ is the $n$th order derivative of $f \in C^n(\mathbb{R})$.
\end{itemize}

 We write $A \lesssim B$ for an inequality that holds up to a constant that is independent of $B$. 
 In Section~\ref{Sect=LowerBounds}, we will also say that $A$ is \emph{of order} $\mathcal{O}(B)$ to denote $A\lesssim B$, and write $A\approx B$ if $A\lesssim B$ and $B\lesssim A$ both hold.

\subsection{Homogeneous sets}  
A set $A \subseteq \mathbb{R}^{n}$ is called   homogeneous  if for every $\mu > 0$ and $\lambda \in A$ we have $\mu \lambda \in A$. A function $f: \mathbb{R}^{n} \rightarrow \mathbb{C}$ is homogeneous if for all $\lambda \in \mathbb{R}^n$, $\mu>0$ we have~${f(\mu \lambda) = f(\lambda)}$. A function $f$ is   even  if $f(-\lambda) = f(\lambda)$ and odd  if $f(-\lambda) = -f(\lambda)$ for all~${\lambda \in \mathbb{R}^{n} \backslash \{ 0 \}}$.

\subsection{Divided differences}

\begin{definition}\label{Dfn=DivDiff} 
Let $f \in C^n(\mathbb{R})$ and $0 \leq k \leq n$. We define the $k$th order divided difference of $f$, denoted by~$f^{[k]}: \mathbb{R}^{k+1} \rightarrow \mathbb{R}$,  inductively by setting for $\lambda_0, \lambda_1 \in \mathbb{R}$, $\lambda \in \mathbb{R}^{k-1}$, 
\[
f^{[k]}(\lambda_0, \lambda_1,  \lambda) = \left\{
\begin{array}{ll}
\frac{f^{[k-1]}(\lambda_0, \lambda) - f^{[k-1]}(\lambda_1, \lambda)  }{\lambda_0 - \lambda_1} & \lambda_0 \not = \lambda_1, \\
 \frac{d}{d\mu} f^{[k-1]}(\mu, \lambda)  & \lambda_0   = \lambda_1. \\
\end{array}
\right.
\]
\end{definition} 
Note that in Section \ref{Sect=LowerBounds} we will extend the definition of $f^{[n]}$ to the function $f(s) = \vert s \vert s^{n-1}$ by setting $f^{[n]}(\lambda, \ldots, \lambda) = 0$, $\lambda \in \mathbb{R}$, and  defining $f^{[n]}$ as in Definition \ref{Dfn=DivDiff} in any other point.

Divided differences are invariant under permutation of the variables, meaning that for any permutation $\sigma \in S_{n+1}$ we have 
\[
f^{[n]}(\lambda_0, \ldots, \lambda_n) = f^{[n]}(\lambda_{\sigma(0)}, \ldots, \lambda_{\sigma(n)}).  
\]
Further, we have the following reduction formula, which can be found in \cite[Lemma 3.1]{CaspersReimann}. Very similar statements appeared already in  \cite[Eqn.\ (4.3) or (4.5)]{PSS-Inventiones}. 
    For all $\xi, \lambda_i, \lambda_j \in\mathbb{R}$ such that $\lambda_i\neq\lambda_j$, we have  
\begin{equation}\label{eqn: insert_one_xi}
\begin{split}
        f^{[n]}(\lambda_0,\dotsc,\lambda_n) = &\frac{\lambda_i-\xi}{\lambda_i-\lambda_j}f^{[n]}(\lambda_0, \dotsc,\lambda_i,\dotsc, \widehat{\lambda_j},\dotsc,\lambda_n, \xi) \\
        & + \frac{\xi-\lambda_j}{\lambda_i-\lambda_j}f^{[n]}(\lambda_0, \dotsc,\widehat{\lambda_i},\dotsc,{\lambda_j},\dotsc, \lambda_n, \xi). 
\end{split}
\end{equation}
This formula for $\xi =0$ for the function~\eqref{Eqn=AbsValue} was also obtained in \cite{CSZ-Israel}.

In our formulas, most divided differences will have repeated variables as input. We therefore introduce the following notation. Let $\alpha=(\alpha_0,\dotsc,\alpha_\kappa)\in \NNZero^{\kappa+1}$ of length 
\[
|\alpha|=\alpha_0+\dotsc+\alpha_\kappa= n+1,
\]
and let $(\lambda_0,\dotsc,\lambda_\kappa)\in\mathbb{R}^{\kappa+1}$. Then divided differences with repeated variables are denoted by \begin{equation*}
    f^{[n]}(\lambda_0^{(\alpha_0)},\dotsc,\lambda_\kappa^{(\alpha_\kappa)}):=f^{[n]}(\underbrace{\lambda_0,\dotsc,\lambda_0}_{\alpha_0\text{-times}},\dotsc,\underbrace{\lambda_\kappa,\dotsc,\lambda_\kappa}_{\alpha_\kappa\text{-times}}).
\end{equation*}
As the order of the divided difference can be inferred from the length of the multi-index $\alpha$, we will suppress it from the notation and write as shorthand 
\begin{equation}\label{Eqn=fBrackets}
f[\lambda^{(\alpha)}] := f[\lambda_0^{(\alpha_0)}, \ldots, \lambda_\kappa^{(\alpha_\kappa)}] := f^{[n]}(\lambda_0^{(\alpha_0)},\dotsc,\lambda_\kappa^{(\alpha_\kappa)}).
\end{equation}
By mild abuse of notation we shall regularly view the expression $f[\lambda^{(\alpha)}]$ as a function of $\lambda \in \mathbb{R}^{\kappa+1}$ given by $\lambda \mapsto f[\lambda^{(\alpha)}]$.

\subsection{Multilinear maps} Let $X_1, \ldots, X_n, X$ be normed vector spaces. Let $T: X_1 \times \ldots \times X_n \rightarrow X$ be a multilinear map, i.e. linear in each of the coordinates. The norm of $T$ is defined as 
\[
\Vert T: X_1 \times \ldots \times X_n \rightarrow X \Vert  := \sup_{x_i \in X_i, \Vert x_i \Vert_{X_i} = 1}  \Vert T(x_1, \ldots, x_n) \Vert_X.
\]

\subsection{Schatten classes}
For a Hilbert space $H$ we let $B(H)$ denote the bounded operators on $H$ with trace $\Tr$. For $p \in [1, \infty)$ we set the \emph{Schatten class} to be
\[
S_p := S_p(H) := \{ x \in B(H) \mid \Vert x \Vert_p := \Tr(\vert x \vert^p)^{\frac{1}{p}} < \infty \}. 
\]
Let $S_\infty$ be the space of compact operators with the operator norm. The spaces $(S_p, \Vert \: \cdot \: \Vert_p)$ are Banach spaces for all $p\in[1,\infty]$. 
The H\"older inequality states that the product 
\begin{equation}\label{Eqn=Mult}
{\rm Mult}: S_{p_1} \times \ldots \times S_{p_n} \rightarrow S_p: 
(x_1, \ldots, x_n) \mapsto x_1 \ldots x_n,
\end{equation}
is a contractive multilinear map for  $1 \leq p, p_1, \ldots, p_n \leq \infty$ with $p = ( \sum_{i=1}^n \frac{1}{p_i} )^{-1}$.

\subsection{Schur multipliers}
We may identify  $L^{2 }(\mathbb{R} \times \mathbb{R})$ with $ S_2(L^2(\mathbb{R}))$ by letting $A \in L^{2}(\mathbb{R} \times \mathbb{R})$ correspond to the operator $x_A$ determined by  
\[
\langle {x_A} \xi, \eta \rangle = \int_{\mathbb{R} \times \mathbb{R}} A(t, s) \xi(s) \overline{\eta}(t) dt ds, \quad \xi, \eta \in L^2(\mathbb{R}).
\]
This correspondence is linear and isometric and we call $A$
the {\it kernel} of $x_A$. 
 By mild abuse of notation, we will use $A$ to refer to both the integral operator $x_A$ and its kernel in the following definition.

\begin{definition}
Let $\phi: \mathbb{R}^{n+1} \rightarrow \mathbb{C}$ be a bounded Borel function. By \cite[Proposition 5]{CLS-AIF}, as a repeated application of the Cauchy-Schwartz inequality,  there exists a unique bounded map
\[
T_\phi: S_{2} \times \ldots \times S_2 \rightarrow S_2: (A_1, \ldots, A_n) \mapsto T_\phi(A_1, \ldots, A_n),
\]
 with norm $\Vert \phi \Vert_\infty$, such that the kernel of the operator $T_\phi(A_1, \ldots, A_n)$ is given by 
\[
T_\phi(A_1, \ldots, A_n)(s_0, s_n) = \int_{\mathbb{R}^{n-1}} \phi(s_0, \ldots, s_n) A_1(s_0, s_1) \ldots A_n(s_{n-1}, s_n) ds_1 \ldots ds_{n-1}, \quad s_0, s_n \in \mathbb{R},
\]
where we integrate over the kernels of $A_i$.  For $1 < p, p_1, \ldots, p_n < \infty$ with $p = ( \sum_{i=1}^n \frac{1}{p_i} )^{-1}$  we use the shorthand notation 
 \[
 \Vert \phi \Vert_{\frakm_{p_1, \ldots, p_n}} = \Vert T_\phi: S_{p_1} \times \ldots \times S_{p_n} \rightarrow S_p \Vert. 
 \]
\end{definition}

\begin{remark}\label{Rmk=ZeroSet}
By definition, multilinear Schur multipliers  only depend on the $L^{\infty}$-equivalence class of the symbol. More precisely, let $\phi, \psi: \mathbb{R}^{n+1} \rightarrow \mathbb{C}$ be bounded Borel functions such that $\{ \lambda \in \mathbb{R}^{n+1} \mid \phi(\lambda) \not = \psi(\lambda) \}$ has Lebesgue measure zero. Then $T_\phi = T_\psi$.  
\end{remark}

\subsection{The H\"ormander-Mikhlin-Schur multiplier theorem} The following theorem was proved by  Conde-Alonso, Gonz\'alez-P\'erez, Parcet and Tablate in \cite[Theorem A]{CGPT}; we state their theorem only for $d=1$. 

\begin{theorem}\label{Thm=HMS}
Let $\phi \in C^{ 1 }(\mathbb{R}^{2} \backslash \{ \lambda = \mu \} )$, $p \in (1, \infty)$. Then
\[
\Vert \phi \Vert_{\frakm_p} \lesssim  p p^\ast \TripleVert \phi \TripleVert_{ {\rm HMS}},
\]
where
\[
\TripleVert \phi \TripleVert_{ {\rm HMS}} = \max(  \Vert \phi \Vert_\infty ,  \Vert (\lambda, \mu) \mapsto     \vert \lambda - \mu \vert  ( \vert   \partial_\lambda \phi(\lambda, \mu) \vert + \vert   \partial_\mu \phi(\lambda, \mu) \vert    ) \Vert_\infty ).
\]
\end{theorem}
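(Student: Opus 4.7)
The plan is to deduce this from a noncommutative Calder\'on-Zygmund argument tailored to Schur multipliers. By complex interpolation with the trivial $S_2\to S_2$ bound $\|T_\phi\|_{\frakm_2}\le\|\phi\|_\infty$ and the duality $\|T_\phi\|_{\frakm_p}=\|T_{\phi^t}\|_{\frakm_{p^*}}$ arising from the transpose symbol $\phi^t(\lambda,\mu):=\phi(\mu,\lambda)$, it suffices to establish a single endpoint estimate, e.g.\ a weak-type bound $\|T_\phi:S_1\to S_{1,\infty}\|\lesssim \TripleVert\phi\TripleVert_{\rm HMS}$ (or dually a $B(H)\to \mathrm{BMO}$ bound into a suitable noncommutative BMO space). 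The linear $pp^*$ dependence in the desired inequality then emerges via Marcinkiewicz-type interpolation on the Schatten scale.

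The natural starting point is a dyadic decomposition of $\phi$ adapted to the diagonal singularity $\{\lambda=\mu\}$. Fix a smooth nonnegative $\psi$ supported in $\{\tfrac{1}{2}\le |t|\le 2\}$ with $\sum_{j\in\mathbb{Z}}\psi(2^j t)=1$ for $t\ne 0$, and set
$$
\phi_j(\lambda,\mu):=\phi(\lambda,\mu)\,\psi\bigl(2^j(\lambda-\mu)\bigr),
$$
so that $\phi=\sum_j \phi_j$, the support of $\phi_j$ sits in the dyadic strip $\{|\lambda-\mu|\asymp 2^{-j}\}$, and the HMS hypothesis gives $\|\phi_j\|_\infty\lesssim 1$ together with $|\partial_\lambda\phi_j|+|\partial_\mu\phi_j|\lesssim 2^j$ on its support. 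This scale invariance of the pieces is precisely the structure exploited by Calder\'on-Zygmund methods.

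One then views each $T_{\phi_j}$ as a singular integral operator whose integral kernel satisfies an appropriate noncommutative H\"ormander-type off-diagonal estimate. Applying a noncommutative Calder\'on-Zygmund decomposition (in the sense of Parcet and collaborators) to an arbitrary $x\in S_1$, one splits $x=g+b$ into a good part $g$ controlled in $S_2$ and a bad part $b$ supported on a ``small'' projection with vanishing mean in the appropriate sense. The contribution of $\sum_j T_{\phi_j}$ on $g$ is handled by the $S_2$-estimate, while the bad part is controlled by combining the off-diagonal condition with the cancellation of $b$, summing geometrically over the dyadic scales $j$.

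The principal obstacle, and what makes this genuinely harder than its classical Fourier-multiplier counterpart, is verifying the noncommutative H\"ormander off-diagonal condition: in the Fourier setting one exploits full translation invariance, whereas Schur multipliers only admit conjugation by diagonal unitaries, a much weaker symmetry group. Overcoming this requires either a transference step embedding $T_\phi$ into a Fourier multiplier on a larger group (where the classical H\"ormander-Mikhlin theorem applies) or, more intrinsically, the construction of a direct Calder\'on-Zygmund framework for Schur multipliers in which Euclidean cubes are replaced by pairs of spectral projections. Either route demands a delicate interplay between the dyadic scales $2^{-j}$ and the operator-theoretic geometry of $S_p$, and is precisely the step where the linear $pp^*$ dependence must be tracked without losing additional logarithmic factors.
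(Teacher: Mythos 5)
The paper does not prove this statement at all: it is quoted verbatim from \cite[Theorem A]{CGPT} (Conde-Alonso, Gonz\'alez-P\'erez, Parcet, Tablate), so there is no internal proof to compare against. Judged on its own terms, your proposal is an outline rather than a proof, and the gap is exactly where you locate it yourself: the entire analytic content of the theorem is the construction of a Calder\'on--Zygmund framework for Schur multipliers (or a transference substitute) in which the off-diagonal/H\"ormander condition can be verified despite the absence of translation invariance. Deferring this to ``either a transference step \dots or the construction of a direct Calder\'on--Zygmund framework'' is deferring the theorem itself; nothing in the proposal supplies the decomposition into spectral projections, the good/bad splitting for operators, or the summation over scales in a way that could be checked.

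There is also a more specific problem with the chosen endpoint. A weak-type bound $\Vert T_\phi : S_1 \rightarrow S_{1,\infty} \Vert \lesssim \TripleVert \phi \TripleVert_{\rm HMS}$ under only the HMS hypothesis is not available and is not what \cite{CGPT} prove; their argument runs through an $S_\infty \rightarrow \mathrm{BMO}$ type estimate for a suitably constructed operator-valued BMO space, followed by interpolation with $S_2$ and duality, which is how the factor $p p^\ast$ arises. The known weak-type $(1,1)$ results for Schur multipliers in this circle of ideas (e.g.\ the resolution of the Nazarov--Peller conjecture in \cite{CPSZ}) concern very specific symbols such as $f^{[1]}$ and are themselves deep theorems, not consequences of a general CZ machine one can invoke. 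So the interpolation scheme would deliver the right growth $p p^\ast$ \emph{if} the endpoint held, but the endpoint you propose is unproved (and quite possibly false at this level of generality), and the dyadic decomposition $\phi = \sum_j \phi_j$, while consistent with the HMS hypotheses, does not by itself advance the argument.
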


\begin{remark}\label{Rmk=Leibniz}
Let $\phi, \psi \in  C^{ 1 }(\mathbb{R}^{2} \backslash \{ \lambda = \mu \} )$. By the Leibniz rule we find 
\[
\TripleVert \phi \psi \TripleVert_{ {\rm HMS}} \leq   \Vert \phi \Vert_\infty  \TripleVert \psi  \TripleVert_{ {\rm HMS}} +   \Vert \psi \Vert_\infty \TripleVert \phi  \TripleVert_{ {\rm HMS}}. 
\]
\end{remark}

For the following theorem we use the notation introduced in \eqref{Eqn=fBrackets} and its subsequent discussion.  

\begin{theorem}\label{Thm=HMSConditions}
Let $\xi(\lambda) = \lambda_1 - \lambda_2$, where  $\lambda = (\lambda_1, \lambda_2)  \in \mathbb{R}^2$.  
We have for every $s \in \mathbb{R}\backslash \{ 0 \}$ and every $\alpha \in \NNOne^2$ with $\vert \alpha \vert = n+1$ that
\[
\begin{split}
\Vert \vert \xi \vert^{is} \Vert_{\HMS} \lesssim &  \vert s \vert, \\
\Vert {\rm sign}(\xi) \vert \xi \vert^{is} \Vert_{\HMS} \lesssim &  \vert s \vert, \\
\Vert   f[ \lambda^{(\alpha)} ]   \Vert_{\HMS} \lesssim &  \Vert f^{(n)} \Vert_\infty, \\
\Vert  \vert \xi \vert^{is} f[ \lambda^{(\alpha)} ]   \Vert_{\HMS} \lesssim & \vert s \vert \Vert f^{(n)} \Vert_\infty, \\
\Vert {\rm sign}(\xi)  \vert \xi \vert^{is} f[ \lambda^{(\alpha)} ]   \Vert_{\HMS} \lesssim & \vert s \vert \Vert f^{(n)} \Vert_\infty. \\
\end{split}
\]
\end{theorem}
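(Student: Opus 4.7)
The strategy is to verify each of the first three HMS bounds by directly computing the two pieces of $\TripleVert \cdot \TripleVert_{\HMS}$, and then to deduce the last two from Remark~\ref{Rmk=Leibniz}. Throughout I write $(\lambda_1, \lambda_2) \in \mathbb{R}^2$, so that $\xi = \lambda_1 - \lambda_2$ plays the role of $\lambda - \mu$ in the definition of $\TripleVert \cdot \TripleVert_{\HMS}$.

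For the first two symbols, both $\vert \xi \vert^{is}$ and $\mathrm{sign}(\xi) \vert \xi \vert^{is}$ have modulus $1$ off the diagonal. Writing $\mathrm{sign}(\xi) \vert \xi \vert^{is} = \xi \vert \xi \vert^{is-1}$, a direct differentiation (together with the product rule for the signed variant) gives $\partial_{\lambda_j} \vert \xi \vert^{is} = (-1)^{j+1} i s\, \mathrm{sign}(\xi) \vert \xi \vert^{is-1}$ and $\partial_{\lambda_j} (\mathrm{sign}(\xi) \vert \xi \vert^{is}) = (-1)^{j+1} i s \vert \xi \vert^{is-1}$, so in both cases $\vert \xi \vert \cdot \vert \partial_{\lambda_j} \phi \vert = \vert s \vert$ for $j = 1, 2$, which yields the claimed bounds.

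For the third bound, the $L^\infty$ estimate follows at once from the Hermite-Genocchi formula
\[
f^{[n]}(\mu_0, \ldots, \mu_n) = \int_{\Delta_n} f^{(n)}\Bigl(\sum_{i=0}^n t_i \mu_i \Bigr) dt,
\]
which gives $\vert f[\lambda^{(\alpha)}] \vert \leq \Vert f^{(n)} \Vert_\infty / n!$. For the derivative piece the key identity, obtained by differentiating under the integral (or inductively from Definition~\ref{Dfn=DivDiff}), is
\[
\partial_{\lambda_1} f[\lambda_1^{(\alpha_1)}, \lambda_2^{(\alpha_2)}] = \alpha_1 \, f^{[n+1]}\bigl(\lambda_1^{(\alpha_1+1)}, \lambda_2^{(\alpha_2)}\bigr).
\]
Since $\alpha_2 \geq 1$, one step of the recursive definition applied to one copy each of $\lambda_1$ and $\lambda_2$ in this $(n+1)$th order divided difference (using permutation invariance) yields
\[
\xi \cdot \partial_{\lambda_1} f[\lambda^{(\alpha)}] = \alpha_1 \Bigl( f[\lambda_1^{(\alpha_1+1)}, \lambda_2^{(\alpha_2 - 1)}] - f[\lambda_1^{(\alpha_1)}, \lambda_2^{(\alpha_2)}] \Bigr),
\]
with the first term interpreted as $f^{(n)}(\lambda_1)/n!$ when $\alpha_2 = 1$. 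Both right-hand summands are $n$th-order divided differences on $n+1$ points, hence each is $\lesssim \Vert f^{(n)} \Vert_\infty$ by Hermite-Genocchi. The derivative in $\lambda_2$ is handled symmetrically (using $\alpha_1 \geq 1$).

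The last two bounds follow immediately from Remark~\ref{Rmk=Leibniz} combined with the previous three estimates: for instance,
\[
\TripleVert \vert \xi \vert^{is} f[\lambda^{(\alpha)}] \TripleVert_{\HMS} \leq \Vert \vert \xi \vert^{is} \Vert_\infty \TripleVert f[\lambda^{(\alpha)}] \TripleVert_{\HMS} + \Vert f[\lambda^{(\alpha)}] \Vert_\infty \TripleVert \vert \xi \vert^{is} \TripleVert_{\HMS} \lesssim \vert s \vert \Vert f^{(n)} \Vert_\infty,
\]
and the $\mathrm{sign}(\xi)\vert \xi \vert^{is}$ variant is identical. There is no serious obstacle in this proof; the only bookkeeping one has to be careful about is the edge cases $\alpha_1 = 1$ or $\alpha_2 = 1$ in the third bound, where the recursion step produces a divided difference with all points coinciding and must be interpreted via the standard convention $f[\lambda_1^{(n+1)}] = f^{(n)}(\lambda_1)/n!$.
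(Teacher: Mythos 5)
Your proof is correct and follows essentially the same route as the paper: direct differentiation for the two oscillatory symbols, the Hermite--Genocchi integral representation plus the derivative identity $\partial_{\lambda_1} f[\lambda^{(\alpha)}] = \alpha_1 f^{[n+1]}(\lambda_1^{(\alpha_1+1)},\lambda_2^{(\alpha_2)})$ and one recursion step for the divided-difference bound, and the Leibniz rule of Remark~\ref{Rmk=Leibniz} for the products. The only difference is that the paper delegates the third estimate to \cite[Lemma 4.3]{CaspersReimann}, whereas you spell out that computation explicitly (correctly handling the edge case $\alpha_2=1$).
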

\begin{proof}
The first and second inequality can be verified directly. The third inequality  holds by the same proof as \cite[Lemma 4.3]{CaspersReimann}. Note in particular that also $\Vert   f^{[n]}[ \lambda^{(\alpha)} ]   \Vert_{\infty} \lesssim   \Vert f^{(n)} \Vert_\infty$.  The fourth and fifth inequality then follows from the Leibniz rule of Remark \ref{Rmk=Leibniz}.  
\end{proof}

\section{A reduction formula for divided differences}\label{Sect=ReductionFormula}

The  formula \eqref{eqn: insert_one_xi} allows for a reduction on the number of input variables of a higher order divided difference. In this section we show that this formula can be extended to reduce the number of input variables even further. This leads to Corollary \ref{Cor=AlgebraicDecomposition}.

The following lemma is in the same spirit as~\cite[Lemma~5.8]{PSS-Inventiones}, however we are able to give an algebraic proof by specialising to divided differences {and at the same time relax the conditions imposed on $\xi$}.
\begin{lemma}\label{lemma: reduction formula divdiff} Let $f\in C^{n}(\mathbb{R})$, $n\ge 2$, and let $\xi\in\mathbb{R}$. Then for all $2\le \kappa\le n$, all multi-indices $\alpha=(\alpha_0,\dotsc,\alpha_\kappa)\in\NNZero^{\kappa+1}$ of length $|\alpha|=\alpha_0+\dotsc+\alpha_\kappa\le n+1$, and all $(\lambda_0,\dotsc,\lambda_\kappa)\in\mathbb{R}^{\kappa+1}$ such that $\lambda_i\neq \lambda_j$ and $\alpha_i,\alpha_j>0$ for some $i\neq j$,
    \begin{align*}
        f[\lambda_i^{(\alpha_i)}, \lambda_j^{(\alpha_j)},\widetilde{\lambda}^{(\widetilde{\alpha})}]= &\sum_{l=0}^{\alpha_i-1}\binom{\alpha_j+l-1}{l}\left(\frac{\lambda_i-\xi}{\lambda_i-\lambda_j}\right)^{\alpha_j}\left(\frac{\xi-\lambda_j}{\lambda_i-\lambda_j}\right)^{l}f[\lambda_i^{(\alpha_i-l)}, \xi^{(\alpha_j+l)},\widetilde{\lambda}^{(\widetilde{\alpha})}] \\ +  &\sum_{l=0}^{\alpha_j-1}\binom{\alpha_i+l-1}{l}\left(\frac{\lambda_i-\xi}{\lambda_i-\lambda_j}\right)^{l}\left(\frac{\xi-\lambda_j}{\lambda_i-\lambda_j}\right)^{\alpha_i}f[\xi^{(\alpha_i+l)},\lambda_j^{(\alpha_j-l)},\widetilde{\lambda}^{(\widetilde{\alpha})}],\numberthis\label{eqn: general_decomp_divdiff}
    \end{align*}
    where $\widetilde{\lambda}^{(\widetilde{\alpha})}:=(\lambda_0^{(\alpha_0)},\dotsc,\widehat{\lambda_i^{(\alpha_i)}},\dotsc,\widehat{\lambda_j^{(\alpha_j)}},\dotsc,{\lambda_\kappa^{(\alpha_\kappa)}})$ denotes the remaining variables with multiplicities. Here, we use the hat-notation introduced in Section~\ref{subsect: notation}.
\end{lemma}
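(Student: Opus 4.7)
The plan is to iterate the single-insertion formula \eqref{eqn: insert_one_xi} until the multiplicity of either $\lambda_i$ or $\lambda_j$ is driven to zero, and to recognise the combinatorial weights of the resulting terminal expressions as the binomial coefficients appearing in \eqref{eqn: general_decomp_divdiff}. One application of \eqref{eqn: insert_one_xi} to the pair $(\lambda_i,\lambda_j)$ and the inserted point $\xi$ yields
\[
f[\lambda_i^{(a)},\lambda_j^{(b)},\widetilde{\lambda}^{(\widetilde{\alpha})}]
= \frac{\lambda_i-\xi}{\lambda_i-\lambda_j}\,f[\lambda_i^{(a)},\lambda_j^{(b-1)},\xi^{(1)},\widetilde{\lambda}^{(\widetilde{\alpha})}]
+ \frac{\xi-\lambda_j}{\lambda_i-\lambda_j}\,f[\lambda_i^{(a-1)},\lambda_j^{(b)},\xi^{(1)},\widetilde{\lambda}^{(\widetilde{\alpha})}],
\]
valid whenever $a,b\ge 1$; here the $\xi$ appears as a new coordinate which may be absorbed into $\widetilde{\lambda}$. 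This gives a binary branching process on triples $(a,b,c)$ encoding the multiplicities of $\lambda_i$, $\lambda_j$, and of the cumulated copies of $\xi$, which terminates as soon as $a=0$ or $b=0$.

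The first step is to observe that a terminal leaf of the form $f[\lambda_i^{(\alpha_i-l)},\xi^{(\alpha_j+l)},\widetilde{\lambda}^{(\widetilde{\alpha})}]$ with $0\le l\le \alpha_i-1$ is reached from the root by a sequence of exactly $\alpha_j$ ``reduce-$\lambda_j$'' moves and $l$ ``reduce-$\lambda_i$'' moves whose very last move must be of the ``reduce-$\lambda_j$'' type (otherwise the process would have already terminated at an earlier node with $\lambda_j$-multiplicity zero). The remaining $\alpha_j+l-1$ moves may be interleaved freely, giving exactly $\binom{\alpha_j+l-1}{l}$ such paths, and each contributes the common weight $\bigl(\tfrac{\lambda_i-\xi}{\lambda_i-\lambda_j}\bigr)^{\alpha_j}\bigl(\tfrac{\xi-\lambda_j}{\lambda_i-\lambda_j}\bigr)^{l}$. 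The symmetric analysis handles the leaves $f[\xi^{(\alpha_i+l)},\lambda_j^{(\alpha_j-l)},\widetilde{\lambda}^{(\widetilde{\alpha})}]$ and produces the second sum in \eqref{eqn: general_decomp_divdiff}.

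To turn this heuristic into a rigorous proof I would induct on $\alpha_i+\alpha_j$, with the base case $\alpha_i=\alpha_j=1$ being immediate from \eqref{eqn: insert_one_xi}. In the inductive step one applies \eqref{eqn: insert_one_xi} once to obtain two summands; when $\alpha_j\ge 2$ and $\alpha_i\ge 2$ the induction hypothesis applies directly to each summand (with $\xi$ absorbed into the role of $\widetilde{\lambda}$ and the indices shifted accordingly), and the two resulting first sums are combined via Pascal's identity $\binom{\alpha_j-2+l}{l}+\binom{\alpha_j-2+l}{l-1}=\binom{\alpha_j-1+l}{l}$ (with the convention $\binom{m}{-1}=0$ handling the $l=0$ endpoint), and similarly for the second sums.

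The delicate point, and therefore the place where I would be most careful, is the boundary of the induction, namely the cases $\alpha_i=1$ or $\alpha_j=1$, where one of the two terms produced by \eqref{eqn: insert_one_xi} is already in terminal form and cannot be further expanded by the inductive hypothesis (which requires both multiplicities to be at least one). These terms must be recognised directly as the $l=\alpha_i-1$ endpoint of the second sum (respectively the $l=\alpha_j-1$ endpoint of the first sum) in \eqref{eqn: general_decomp_divdiff}, matching the binomial coefficients $\binom{\alpha_i+l-1}{l}$ at $l=\alpha_j-1$ (and symmetrically); once this endpoint bookkeeping is carried out, the Pascal combination closes cleanly and the induction goes through.
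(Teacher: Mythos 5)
Your proposal is correct and follows essentially the same route as the paper: a single application of \eqref{eqn: insert_one_xi}, the inductive hypothesis applied to the resulting summands (with $\xi$ absorbed into $\widetilde{\lambda}$), Pascal's identity to merge the binomial coefficients, and separate treatment of the boundary cases $\alpha_i=1$ or $\alpha_j=1$ where one summand is already terminal — the paper organises this as a sub-induction on $\alpha_0$ with $\alpha_1=1$ first, but the content is identical. The lattice-path interpretation of the coefficients $\binom{\alpha_j+l-1}{l}$ is a nice heuristic the paper does not spell out, and your verification via the induction is sound.
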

By choosing $\xi=\lambda_{k}$ for some $k\in\{0,\dotsc,\kappa\}$ and a given tuple $(\lambda_0,\dotsc,\lambda_\kappa)\in\mathbb{R}^{\kappa+1}$,  we obtain the following reduction formula for divided differences of repeated variables. It allows us to express a divided difference in $\kappa+1$ variables as a linear combination of divided differences of the same order in $\kappa$ variables.  We let the polynomials in~\eqref{eqn: general_decomp_divdiff} be denoted by
\begin{equation}\label{eqn: decomp_polynomials}
p_{\alpha,l}(\xi) := \binom{\alpha + l -1}{ l }   \xi^{\alpha}(1-\xi)^l, \qquad \alpha, l \in \mathbb{N}_{\geq 0}.
\end{equation}

\begin{corollary}\label{Cor=AlgebraicDecomposition}
    In the setting of Lemma~\ref{lemma: reduction formula divdiff}, we have
    \begin{align*}
        f[\lambda_i^{(\alpha_i)}, \lambda_{k}^{(\alpha_{k})},\lambda_j^{(\alpha_j)},\widetilde{\lambda}^{(\widetilde{\alpha})}]= &\sum_{l=0}^{\alpha_i-1}
        p_{\alpha_j, l}\left( \frac{\lambda_i-\lambda_{k}}{\lambda_i-\lambda_j}    \right) f[\lambda_i^{(\alpha_i-l)}, \lambda_{k}^{(\alpha_{k}+\alpha_j+l)},\widetilde{\lambda}^{(\widetilde{\alpha})}] \\ +  &\sum_{l=0}^{\alpha_j-1}  p_{\alpha_i, l}\left( \frac{\lambda_k-\lambda_{j}}{\lambda_i-\lambda_j}    \right) f[\lambda_{k}^{(\alpha_{k}+\alpha_i+l)},\lambda_j^{(\alpha_j-l)},\widetilde{\lambda}^{(\widetilde{\alpha})}],\numberthis \\
        \widetilde{\lambda}^{(\widetilde{\alpha})}=&(\lambda_0^{(\alpha_0)},\dotsc,\widehat{\lambda_i^{(\alpha_i)}},\dotsc,\widehat{\lambda_j^{(\alpha_j)}},\dotsc,\widehat{\lambda_k^{(\alpha_k)}},\dotsc{\lambda_\kappa^{(\alpha_\kappa)}}),
    \end{align*}
    for any $k\in\{0,\dotsc,\kappa\}$.

\end{corollary}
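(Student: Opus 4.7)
The plan is to obtain Corollary~\ref{Cor=AlgebraicDecomposition} as an immediate specialization of Lemma~\ref{lemma: reduction formula divdiff} by taking $\xi := \lambda_k$, rewriting the rational coefficients in terms of the polynomials $p_{\alpha, l}$ from~\eqref{eqn: decomp_polynomials}, and then invoking permutation invariance of divided differences to merge the new duplicated $\lambda_k$ slot with the pre-existing one. All hypotheses are in place: the lemma requires only $\lambda_i \neq \lambda_j$ and $\alpha_i, \alpha_j \geq 1$, with no restriction on the auxiliary variable $\xi$, so the substitution $\xi = \lambda_k$ is always legal.

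Concretely, I would proceed in three steps. First, I use permutation invariance to identify the left-hand side of the corollary with $f[\lambda_i^{(\alpha_i)}, \lambda_j^{(\alpha_j)}, \widetilde{\lambda}^{(\widetilde{\alpha}_\ast)}]$, where $\widetilde{\lambda}^{(\widetilde{\alpha}_\ast)}$ is the lemma's reduced tuple — which still contains the entry $\lambda_k^{(\alpha_k)}$, under the tacit assumption $k \notin \{i, j\}$. The degenerate cases $k \in \{i, j\}$ reduce to a tautology: by $p_{\alpha_j, l}(0) = 0$ for $\alpha_j \geq 1$ and $p_{\alpha_i, l}(1) = \delta_{l, 0}$, the formula collapses to the identity $f[\lambda_i^{(\alpha_i + \alpha_k)}, \lambda_j^{(\alpha_j)}, \ldots] = f[\lambda_i^{(\alpha_i + \alpha_k)}, \lambda_j^{(\alpha_j)}, \ldots]$, and symmetrically for $k = j$.

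Second, I apply Lemma~\ref{lemma: reduction formula divdiff} with $\xi = \lambda_k$ and observe the elementary identity
\[
\frac{\lambda_i - \lambda_k}{\lambda_i - \lambda_j} + \frac{\lambda_k - \lambda_j}{\lambda_i - \lambda_j} = 1,
\]
so that the coefficient appearing in the first sum of the lemma's output equals $p_{\alpha_j, l}\!\bigl(\tfrac{\lambda_i - \lambda_k}{\lambda_i - \lambda_j}\bigr)$ and that in the second sum equals $p_{\alpha_i, l}\!\bigl(\tfrac{\lambda_k - \lambda_j}{\lambda_i - \lambda_j}\bigr)$, matching the statement verbatim. Third, I apply permutation invariance once more to absorb the newly appearing $\xi^{(\alpha_j+l)} = \lambda_k^{(\alpha_j + l)}$ into the pre-existing $\lambda_k^{(\alpha_k)}$ slot inside $\widetilde{\lambda}^{(\widetilde{\alpha}_\ast)}$, producing a single entry $\lambda_k^{(\alpha_k + \alpha_j + l)}$ and collapsing $\widetilde{\lambda}^{(\widetilde{\alpha}_\ast)}$ to the tuple $\widetilde{\lambda}^{(\widetilde{\alpha})}$ of the corollary that omits $\lambda_k$ entirely. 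The identical merging handles the second sum.

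The main obstacle is essentially nil: the derivation is pure bookkeeping, and the whole content of the corollary relative to the lemma is the recognition that $t^{\alpha}(1-t)^{l}$ times the appropriate binomial is exactly $p_{\alpha, l}(t)$ from~\eqref{eqn: decomp_polynomials}. The only point worth attention is to ensure that the hat notation is kept consistent: in the lemma $\widetilde{\alpha}$ still carries the $k$-th component, while in the corollary it does not, and this discrepancy is absorbed precisely by the permutation-invariance merging in the third step.
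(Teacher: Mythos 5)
Your proposal is correct and coincides with the paper's own derivation: the corollary is obtained precisely by setting $\xi=\lambda_k$ in Lemma~\ref{lemma: reduction formula divdiff}, recognising the coefficients as $p_{\alpha,l}$ via $1-\tfrac{\lambda_i-\lambda_k}{\lambda_i-\lambda_j}=\tfrac{\lambda_k-\lambda_j}{\lambda_i-\lambda_j}$, and merging the new $\lambda_k$ occurrences with the existing $\lambda_k^{(\alpha_k)}$ slot by permutation invariance. The paper offers no further argument, so your bookkeeping (including the remark on the degenerate cases $k\in\{i,j\}$, which the paper leaves implicit) is, if anything, slightly more complete.
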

\begin{proof}[Proof of Lemma~\ref{lemma: reduction formula divdiff}]
    As divided differences are invariant under permutation of variables, we may without loss of generality assume that $i=0$ and $j=1$. We prove the lemma by induction on $\alpha_0$ and $\alpha_1$.

    If $\alpha_0=\alpha_1=1$, we immediately obtain from~\eqref{eqn: insert_one_xi} that \begin{equation*}
        f[\lambda_0^{(1)},\lambda_1^{(1)},\widetilde{\lambda}^{(\widetilde{\alpha})}] = \frac{\lambda_0-\xi}{\lambda_0-\lambda_1}f[\lambda_0^{(1)}, \xi^{(1)},\widetilde{\lambda}^{(\widetilde{\alpha})}]+\frac{\xi-\lambda_1}{\lambda_0-\lambda_1}f[\xi^{(1)},\lambda_1^{(1)},\widetilde{\lambda}^{(\widetilde{\alpha})}].
    \end{equation*}

    Now assume that~\eqref{eqn: general_decomp_divdiff} holds for $\alpha_1=1$ and some   
  { $1<\alpha_0< n - \vert \widetilde{\alpha} \vert$.  }
    Applying~\eqref{eqn: insert_one_xi} and the induction hypothesis yields \begin{align*}
        f[\lambda_0^{(\alpha_0+1)}, \lambda_1^{(1)},\widetilde{\lambda}^{(\widetilde{\alpha})}] &= \frac{\lambda_0-\xi}{\lambda_0-\lambda_1}f[\lambda_0^{(\alpha_0+1)}, \xi^{(1)},\widetilde{\lambda}^{(\widetilde{\alpha})}]+\frac{\xi-\lambda_1}{\lambda_0-\lambda_1}f[\lambda_0^{(\alpha_0)},\xi^{(1)},\lambda_1^{(1)},\widetilde{\lambda}^{(\widetilde{\alpha})}] \\
        &= \frac{\lambda_0-\xi}{\lambda_0-\lambda_1}f[\lambda_0^{(\alpha_0+1)}, \xi^{(1)},\widetilde{\lambda}^{(\widetilde{\alpha})}]  \\&\quad+\frac{\xi-\lambda_1}{\lambda_0-\lambda_1}\bigg(\sum_{l=0}^{\alpha_0-1}\underbrace{\binom{1+l-1}{l}}_{=1\text{ for all }l}\frac{\lambda_0-\xi}{\lambda_0-\lambda_1}\left(\frac{\xi-\lambda_1}{\lambda_0-\lambda_1}\right)^{l}f[\lambda_0^{(\alpha_0-l)}, \xi^{(l+2)},\widetilde{\lambda}^{(\widetilde{\alpha})}]\big. \\   &\qquad\qquad\qquad \quad\big.+\underbrace{\binom{\alpha_0+0-1}{0}}_{=1}\left(\frac{\xi-\lambda_1}{\lambda_0-\lambda_1}\right)^{\alpha_0}f[\xi^{({\alpha_0+1})},\lambda_1^{(1)},\widetilde{\lambda}^{(\widetilde{\alpha})}]\bigg).
    \end{align*}
    Noting that all binomial coefficients are trivial for $\alpha_1=1$, we obtain our formula by rewriting the previous line as \begin{align*}&  \frac{\lambda_0-\xi}{\lambda_0-\lambda_1}f[\lambda_0^{(\alpha_0+1)}, \xi^{(1)},\widetilde{\lambda}^{(\widetilde{\alpha})}]  +\sum_{l=0}^{\alpha_0-1}\frac{\lambda_0-\xi}{\lambda_0-\lambda_1}\left(\frac{\xi-\lambda_1}{\lambda_0-\lambda_1}\right)^{l+1}f[\lambda_0^{(\alpha_0-l)}, \xi^{(  l+2 )},\widetilde{\lambda}^{(\widetilde{\alpha})}]\\
    & \qquad+ \left(\frac{\xi-\lambda_1}{\lambda_0-\lambda_1}\right)^{\alpha_0+1}f[\xi^{( \alpha_0 +1 )},\lambda_1^{(1)},\widetilde{\lambda}^{(\widetilde{\alpha})}] \\
    & = \sum_{l=0}^{\alpha_0}\frac{\lambda_0-\xi}{\lambda_0-\lambda_1}\left(\frac{\xi-\lambda_1}{\lambda_0-\lambda_1}\right)^{l}f[\lambda_0^{(\alpha_0+1-l)}, \xi^{( l+1)},\widetilde{\lambda}^{(\widetilde{\alpha})}] + \left(\frac{\xi-\lambda_1}{\lambda_0-\lambda_1}\right)^{\alpha_0+1}f[\xi^{( \alpha_0 +1)},\lambda_1^{(1)},\widetilde{\lambda}^{(\widetilde{\alpha})}],
    \end{align*}
    which finishes the proof in the case $\alpha_1=1$.

Now let $\alpha_1>1$. If $\alpha_0=1$, the statement follows from the previous case by invariance of the divided differences under permutation of variables. For the induction step, let $\alpha_1,\alpha_0>1$. As in the previous case, we first apply~\eqref{eqn: insert_one_xi} and obtain
\begin{align*}
    f[\lambda_0^{(\alpha_0)}, \lambda_1^{(\alpha_1)},\widetilde{\lambda}^{(\widetilde{\alpha})}]= \frac{\lambda_0-\xi}{\lambda_0-\lambda_1}f[\lambda_0^{(\alpha_0)}, \xi^{(1)},\lambda_1^{(\alpha_1-1)},\widetilde{\lambda}^{(\widetilde{\alpha})}]+\frac{\xi-\lambda_1}{\lambda_0-\lambda_1}f[\lambda_0^{(\alpha_0-1)},\xi^{(1)},\lambda_1^{(\alpha_1)},\widetilde{\lambda}^{(\widetilde{\alpha})}].
\end{align*}
We can now use the induction hypothesis to rewrite both divided differences as \begin{align*}
    &\quad\frac{\lambda_0-\xi}{\lambda_0-\lambda_1}\left(\sum_{l=0}^{\alpha_0-1}\binom{(\alpha_1-1)+l-1}{l}\left(\frac{\lambda_0-\xi}{\lambda_0-\lambda_1}\right)^{\alpha_1-1}\left(\frac{\xi-\lambda_1}{\lambda_0-\lambda_1}\right)^{l}f[\lambda_0^{(\alpha_0-l)}, \xi^{((\alpha_1-1)+l)},\widetilde{\lambda}^{(\widetilde{\alpha})}] \right.\\   &\qquad\qquad\quad+\left.\sum_{l=0}^{\alpha_1-2}\binom{\alpha_0+l-1}{l}\left(\frac{\lambda_0-\xi}{\lambda_0-\lambda_1}\right)^{l}\left(\frac{\xi-\lambda_1}{\lambda_0-\lambda_1}\right)^{\alpha_0}f[\xi^{(\alpha_0+l)},\lambda_1^{((\alpha_1-1)-l)},\widetilde{\lambda}^{(\widetilde{\alpha})}]\right) \\
    &+\frac{\xi-\lambda_1}{\lambda_0-\lambda_1}\left(\sum_{l=0}^{\alpha_0-2}\binom{\alpha_1+l-1}{l}\left(\frac{\lambda_0-\xi}{\lambda_0-\lambda_1}\right)^{\alpha_1}\left(\frac{\xi-\lambda_1}{\lambda_0-\lambda_1}\right)^{l}f[\lambda_0^{((\alpha_0-1)-l)}, \xi^{(\alpha_1+l)},\widetilde{\lambda}^{(\widetilde{\alpha})}] \right.\\   &\qquad\qquad\quad+\left.\sum_{l=0}^{\alpha_1-1}\binom{(\alpha_0-1)+l-1}{l}\left(\frac{\lambda_0-\xi}{\lambda_0-\lambda_1}\right)^{l}\left(\frac{\xi-\lambda_1}{\lambda_0-\lambda_1}\right)^{\alpha_0-1}f[\xi^{((\alpha_0-1)+l)},\lambda_1^{(\alpha_1-l)},\widetilde{\lambda}^{(\widetilde{\alpha})}]\right). \\
\end{align*}
By pulling the outermost fractions into the sum and renaming the summation index in the second and third sum, we may rewrite this expression as \begin{align*}
    &\sum_{l=0}^{\alpha_0-1}\binom{(\alpha_1-1)+l-1}{l}\left(\frac{\lambda_0-\xi}{\lambda_0-\lambda_1}\right)^{\alpha_1}\left(\frac{\xi-\lambda_1}{\lambda_0-\lambda_1}\right)^{l}f[\lambda_0^{(\alpha_0-l)}, \xi^{((\alpha_1-1)+l)},\widetilde{\lambda}^{(\widetilde{\alpha})}] \\   &\quad+\sum_{l=1}^{\alpha_1-1}\binom{(\alpha_0-1)+l-1}{l-1}\left(\frac{\lambda_0-\xi}{\lambda_0-\lambda_1}\right)^{l}\left(\frac{\xi-\lambda_1}{\lambda_0-\lambda_1}\right)^{\alpha_0}f[\xi^{((\alpha_0-1)+l)},\lambda_1^{(\alpha_1-l)},\widetilde{\lambda}^{(\widetilde{\alpha})}] \\
    &+\sum_{l=1}^{\alpha_0-1}\binom{(\alpha_1-1)+l-1}{l-1}\left(\frac{\lambda_0-\xi}{\lambda_0-\lambda_1}\right)^{\alpha_1}\left(\frac{\xi-\lambda_1}{\lambda_0-\lambda_1}\right)^{l}f[\lambda_0^{(\alpha_0-l)}, \xi^{((\alpha_1-1)+l)},\widetilde{\lambda}^{(\widetilde{\alpha})}]\\   &\quad+\sum_{l=0}^{\alpha_1-1}\binom{(\alpha_0-1)+l-1}{l}\left(\frac{\lambda_0-\xi}{\lambda_0-\lambda_1}\right)^{l}\left(\frac{\xi-\lambda_1}{\lambda_0-\lambda_1}\right)^{\alpha_0}f[\xi^{((\alpha_0-1)+l)},\lambda_1^{(\alpha_1-l)},\widetilde{\lambda}^{(\widetilde{\alpha})}]. \\
\end{align*}
We collect the summands with the same divided difference expression and obtain the following expression.
\begin{align*}
    &\left(\frac{\lambda_0-\xi}{\lambda_0-\lambda_1}\right)^{\alpha_1}f[\lambda_0^{(\alpha_0)}, \xi^{(\alpha_1-1)},\widetilde{\lambda}^{(\widetilde{\alpha})}]\\
    &+\sum_{l=1}^{\alpha_0-1}\left(\binom{(\alpha_1-1)+l-1}{l}+\binom{(\alpha_1-1)+l-1}{l-1}\right)\left(\frac{\lambda_0-\xi}{\lambda_0-\lambda_1}\right)^{\alpha_1} \\
    & \qquad \times \qquad \left(\frac{\xi-\lambda_1}{\lambda_0-\lambda_1}\right)^{l}f[\lambda_0^{(\alpha_0-l)}, \xi^{((\alpha_1-1)+l)},\widetilde{\lambda}^{(\widetilde{\alpha})}] \\ 
    &+ \sum_{l=1}^{\alpha_1-1}\left(\binom{(\alpha_0-1)+l-1}{l}+\binom{(\alpha_0-1)+l-1}{l-1}\right)\\
    & \qquad \times \qquad \left(\frac{\lambda_0-\xi}{\lambda_0-\lambda_1}\right)^{l}\left(\frac{\xi-\lambda_1}{\lambda_0-\lambda_1}\right)^{\alpha_0}f[\xi^{((\alpha_0-1)+l)},\lambda_1^{(\alpha_1-l)},\widetilde{\lambda}^{(\widetilde{\alpha})}] 
    \\
    &+\left(\frac{\xi-\lambda_1}{\lambda_0-\lambda_1}\right)^{\alpha_0}f[\xi^{(\alpha_0-1)},\lambda_1^{(\alpha_1)},\widetilde{\lambda}^{(\widetilde{\alpha})}]. \\
\end{align*}
From Pascal's identity, it follows for $k=0,1$ that \begin{equation*}
    \binom{(\alpha_k-1)+l-1}{l}+\binom{(\alpha_k-1)+l-1}{l-1} = \binom{(\alpha_k-1)+l}{l} = \binom{\alpha_k+l-1}{l},
\end{equation*} 
finishing the proof.
\end{proof}
\begin{remark}
    Note that the statement above is indeed a special case of~\cite[Lemma~5.8]{PSS-Inventiones}. There, a polynomial integral momentum is defined as \begin{align*}
        \phi_{\kappa,h,p}(\lambda_0,\dotsc,\lambda_{\kappa})&:=\int_{R_{\kappa}}p(s_0,\dotsc,s_{\kappa-1}, 1-\sum_{j=0}^{\kappa-1}s_j)h(s_0\lambda_0+\dotsc+s_{\kappa-1}\lambda_{\kappa-1}+(1-\sum_{j=0}^{\kappa-1}s_j)\lambda_{\kappa})ds,\\
        R_{\kappa}&:=\{(s_0,\dotsc,s_{\kappa-1})\in\mathbb{R}^{\kappa}\mid\sum_{j=0}^{\kappa-1}s_j\le 1,\;s_j\ge0,\;0\le j\le {\kappa -1} \},
    \end{align*}
    where $\kappa\in\NNOne$, $h\in C_b(\mathbb{R})$, and $p$ is a polynomial in $\kappa+1$ variables. Consider the special case where $h=f^{(n)}$ for some $f\in C^{n}(\mathbb{R})$ with bounded derivatives, and $p(s_0,\dotsc,s_{\kappa})=s_0^{\alpha_0}\cdots s_{\kappa}^{\alpha_{\kappa}}$ is a monomial with $|\alpha|=\alpha_0+\dotsc+\alpha_{\kappa}= n-\kappa$. 
    Now \begin{align*}
        &\phi_{\kappa,f^{(n)},p}(\lambda_0,\dotsc,\lambda_{\kappa}) \\&=\int_{  R_{\kappa}}s_0^{\alpha_0}\cdots s_{\kappa-1}^{\alpha_{\kappa}-1}(1-\sum_{j=0}^{\kappa-1}s_j)^{\alpha_{\kappa}}f^{(n)}(s_0\lambda_0+\dotsc+ s_{\kappa-1}\lambda_{\kappa-1}+(1-\sum_{j=0}^{\kappa-1}s_j)\lambda_{\kappa}) ds \\
        &= \partial_{\lambda_0}^{\alpha_0}\cdots \partial_{\lambda_{\kappa}}^{\alpha_{\kappa}}\int_{  R_{\kappa}}f^{(n-|\alpha|)}(s_0\lambda_0+\dotsc+ s_{\kappa-1}\lambda_{\kappa-1}+(1-\sum_{j=0}^{\kappa-1}s_j)\lambda_{\kappa})ds \\
        &=\partial_{\lambda_0}^{\alpha_0}\cdots \partial_{\lambda_{\kappa}}^{\alpha_{\kappa}} \phi_{\kappa,f^{(n-|\alpha|)},1}(\lambda_0,\dotsc,\lambda_{\kappa}).
    \end{align*}  
    In~\cite[Lemma~5.1]{PSS-Inventiones}, it was shown that $\phi_{\kappa,f^{(\kappa)},1}=f^{[\kappa]}$. So by noting that $n-|\alpha|=\kappa$ and using~\cite[Lemma~4.2]{CaspersReimann} to calculate the derivatives of the divided difference, we have \begin{align*}
        \partial_{\lambda_0}^{\alpha_0}\cdots \partial_{\lambda_{\kappa}}^{\alpha_{\kappa}} \phi_{\kappa,f^{(n-|\alpha|)},1}(\lambda_0,\dotsc,\lambda_{\kappa}) &= \partial_{\lambda_0}^{\alpha_0}\cdots \partial_{\lambda_{\kappa}}^{\alpha_{\kappa}} f^{[n-|\alpha|]}(\lambda_0,\dotsc,\lambda_{\kappa}) \\
        &=\alpha_0!\cdots\alpha_{\kappa}!f^{[n]}(\lambda_0^{(\alpha_0+1)},\dotsc,\lambda_{\kappa}^{(\alpha_{\kappa}+1)}).
    \end{align*}
    More generally, for $|\alpha|\le n-\kappa$, we obtain \begin{equation*}
        \phi_{\kappa,f^{(n)},p}(\lambda_0,\dotsc,\lambda_{\kappa})=\alpha_0!\cdots\alpha_{\kappa}!(f^{(n-|\alpha|-\kappa)})^{[|\alpha|+\kappa]}(\lambda_0^{(\alpha_0+1)},\dotsc,\lambda_{\kappa}^{(\alpha_{\kappa}+1)}).
    \end{equation*}
\end{remark}

\section{Expansions for smooth homogeneous symbols}\label{Sect=Homogeneous} 
The main theorem of this section is a multivariable result of a decomposition that occurs in the proof of \cite[Theorem 5.1]{CaspersReimann} and whose origin lies in \cite[Lemma 6]{PoSu11}. An important technicality that we overcome is that the support of our function can intersect with the hyperplane given by  $\{ \xi \in \mathbb{R}^n \mid \xi_n = 0\}$; the function only needs to vanish on this hyperplane and its support should not intersect any hyperplane given by $\{ \xi \in \mathbb{R}^n \mid \xi_k = 0\}$, $k = 1, \ldots, n-1$.  Already in the $n=2$ case our result is therefore more general than \cite[Theorem 5.1]{CaspersReimann}. This also leads directly to an improvement of the asymptotic constants in our main estimate compared to \cite{PSS-Inventiones} as we avoid the use of triangular truncations, see \cite[p.\ 533, lines 4 to 6]{PSS-Inventiones}.

\begin{remark}
The support of a function $\phi: \mathbb{R}^n \backslash \{ 0 \} \rightarrow \mathbb{C}$ is denoted as $\supp(\phi)$ and is defined to be the closure of the set 
\[
\{ \xi \in \mathbb{R}^{n} \backslash \{ 0 \} \mid \phi(\xi) \not = 0 \}
\]
in $\mathbb{R}^n \backslash \{ 0 \}$ and in particular does not include the point $0\in\mathbb{R}^n$. If $\phi$ is homogeneous then the support is a homogeneous set. 
\end{remark}

\begin{proposition}\label{prop=exists_rho}
Let $n \geq 2$ and let $\phi: \mathbb{R}^n \backslash \{ 0 \}  \rightarrow \mathbb{C}$ be a continuous, even homogeneous function. Assume further that for every $1 \leq k \leq n-1$ we have  
\begin{equation}\label{Eqn=SupportAssumption}
\supp(\phi) \cap \{ \xi \in \mathbb{R}^n \backslash \{ 0 \}  \mid  \xi_k = 0  \} = \emptyset.
\end{equation}
Then the function on $\mathbb{R}^{n-1}$ given by 
\begin{equation}\label{Eqn=PsiConstruct}
\psi(s_1, \ldots, s_{n-1}) := \phi(1,  s_1, s_1 s_2, \ldots    , s_1 s_2 \ldots s_{n-2}, s_1 s_2 \ldots s_{n-1}),
\end{equation}
is continuous, compactly supported and  for $\xi \in \mathbb{R}^{n}$ with  $\xi_k \not = 0$ for all $1 \leq k \leq n-1$ we have
\begin{equation}\label{Eqn=HomogeneousFunctionPhiPsi}
\phi(\xi_1, \ldots, \xi_n) = \psi(\frac{\xi_2}{\xi_1}, \frac{\xi_3}{\xi_2}, \ldots, \frac{\xi_n}{\xi_{n-1}}).
\end{equation}
\end{proposition}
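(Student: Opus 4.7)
The plan is to verify, in order, the identity~\eqref{Eqn=HomogeneousFunctionPhiPsi}, continuity of $\psi$, and compactness of $\supp(\psi)$; the first two are immediate from the definitions, while the last step is where the hypothesis~\eqref{Eqn=SupportAssumption} really gets used.

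For the identity, I would fix $\xi \in \mathbb{R}^n$ with $\xi_1, \ldots, \xi_{n-1}$ nonzero. Since $\xi_1 \neq 0$, applying evenness of $\phi$ (if $\xi_1 < 0$) followed by homogeneity (scaling by $1/|\xi_1| > 0$) gives
\[
\phi(\xi_1, \ldots, \xi_n) = \phi\!\left(1, \tfrac{\xi_2}{\xi_1}, \tfrac{\xi_3}{\xi_1}, \ldots, \tfrac{\xi_n}{\xi_1}\right).
\]
Setting $s_j := \xi_{j+1}/\xi_j$ and using the telescoping identity $\xi_{k+1}/\xi_1 = s_1 s_2 \cdots s_k$, the right-hand side equals $\psi(s_1, \ldots, s_{n-1})$ by~\eqref{Eqn=PsiConstruct}. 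Continuity of $\psi$ is then immediate: the map $(s_1, \ldots, s_{n-1}) \mapsto (1, s_1, s_1 s_2, \ldots, s_1 s_2 \cdots s_{n-1})$ is continuous from $\mathbb{R}^{n-1}$ into $\mathbb{R}^n \setminus \{0\}$ (its first coordinate is always $1$), and $\phi$ is continuous there.

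For compactness of $\supp(\psi)$, the main obstacle, the key is to extract a uniform positive lower bound on the first $n-1$ coordinates across all of $\supp(\phi)$. The set $K := \supp(\phi) \cap S^{n-1}$ is compact (as a closed subset of $\mathbb{R}^n \setminus \{0\}$ intersected with the unit sphere) and, by~\eqref{Eqn=SupportAssumption}, is disjoint from each hyperplane $\{\xi_k = 0\}$ for $k=1,\ldots,n-1$. Hence the continuous functions $\xi \mapsto |\xi_k|$ attain a positive minimum $\delta > 0$ on $K$. Since $\supp(\phi)$ is a homogeneous set (noted just before the proposition), every $\xi \in \supp(\phi)$ satisfies $\xi/|\xi| \in K$, and therefore $|\xi_k| \geq \delta |\xi|$ for all $k \leq n-1$.

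Applying this to $\eta := (1, s_1, s_1 s_2, \ldots, s_1 \cdots s_{n-1})$, which lies in $\supp(\phi)$ whenever $\psi(s) \neq 0$, the case $k=1$ gives $|\eta| \leq \delta^{-1}$, hence $|\eta_k| \leq \delta^{-1}$ for all $k \leq n$, while for $k \leq n-1$ we also have $|\eta_k| \geq \delta$. Consequently
\[
|s_j| = \bigl|\eta_{j+1}/\eta_j\bigr| \leq \delta^{-2}, \qquad j=1,\ldots,n-1,
\]
so $\{s \in \mathbb{R}^{n-1} : \psi(s) \neq 0\}$ is bounded and its closure $\supp(\psi)$ is compact. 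The only subtlety is noting that the homogeneity of $\supp(\phi)$ automatically controls the last coordinate $\eta_n$ via the bound $|\eta| \leq \delta^{-1}$, even though~\eqref{Eqn=SupportAssumption} imposes no direct constraint at $\xi_n = 0$; this is precisely the asymmetry that makes this proposition more general than its bilinear predecessor in~\cite{CaspersReimann}.
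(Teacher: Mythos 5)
Your proposal is correct and follows essentially the same route as the paper: the identity and continuity are handled identically, and the compact-support step in both arguments rests on the same key point, namely that homogeneity of $\supp(\phi)$ plus the disjointness hypothesis~\eqref{Eqn=SupportAssumption} yield, via compactness on the unit sphere, a uniform bound of the form $\vert \xi_k \vert \gtrsim \vert \xi_{k+1}\vert$ on the support for $k \leq n-1$, which forces the quotients $s_j = \eta_{j+1}/\eta_j$ to be bounded. Your explicit normalization $\vert\eta\vert \leq \delta^{-1}$ via $\eta_1 = 1$ is a minor cosmetic variant of the paper's direct estimate $\vert\xi_k\vert \geq \varepsilon_k \vert\xi_{k+1}\vert$, not a different method.
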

\begin{proof}
The equality \eqref{Eqn=HomogeneousFunctionPhiPsi} follows as $\phi$ is even homogeneous, as indeed
\[
\psi(\frac{\xi_2}{\xi_1}, \frac{\xi_3}{\xi_2}, \ldots, \frac{\xi_n}{\xi_{n-1}}) = \phi(1, \frac{\xi_2}{\xi_1}, \ldots, \frac{\xi_n}{\xi_1}) = \phi(\xi_1, \ldots, \xi_n). 
\]
Clearly $\psi$ is continuous by \eqref{Eqn=PsiConstruct}.
It remains to justify that $\psi$ has compact support. Since~$\phi$ satisfies~\eqref{Eqn=SupportAssumption} and is homogeneous, we have that for every $1 \leq k \leq n-1$ there exists some~$\varepsilon_k>0$ such that
    $\phi(\xi_1, \ldots, \xi_n) = 0$ if $|\xi_{k} |<\varepsilon_k \vert \xi_{k+1} \vert$. Let $\varepsilon = \min(\varepsilon_1, \ldots, \varepsilon_{n-1})$. For all~${(s_1,\dotsc,s_{n-1})\in\mathbb{R}^{n-1}}$ such that $|s_k|>\tfrac{1}{\varepsilon}$ for some $k=1,\dotsc,n-1$, we see by definition of~$\psi$  that 
    \begin{align*}
        \psi(s_1,\dotsc,s_{n-1})&= \phi(1,  s_1, s_1 s_2, \ldots    , s_1 s_2 \ldots s_{n-2}, s_1 s_2 \ldots s_{n-1}) = 0, 
    \end{align*}
   as indeed $\xi_{k} := s_1 \ldots s_{k-1}$ and $\xi_{k+1} := s_1 \ldots s_{k}$ have  the property that $\vert \xi_{k} \vert < \varepsilon \vert \xi_{k+1}\vert$. Hence in particular~${\vert \xi_{k} \vert < \varepsilon_k \vert \xi_{k+1}\vert}$, and thus     
 $\psi$ has compact support.

\end{proof}

\begin{definition}\label{Dfn=Hn}
Let $\mathcal{H}_n$, $n \geq 2$ be the set of smooth homogeneous functions~$\phi: \mathbb{R}^n \backslash \{ 0\} \rightarrow  \mathbb{C}$   such that  for all $\xi_1, \ldots, \xi_{n-1} \in \mathbb{R}$ we have  
\begin{equation}\label{Eqn=HnSuppConiditonA}
    \phi(\xi_1, \ldots, \xi_{n-1},  0) = 0,
\end{equation} 
and for all $1 \leq k \leq n-1$ we have 
\begin{equation}\label{Eqn=HnSuppConiditon}  
\supp(\phi) \cap \{ \xi \in \mathbb{R}^{n} \backslash \{ 0 \} \mid \xi_k = 0 \} = \emptyset. 
\end{equation}
\end{definition}

\begin{proposition}\label{Prop=FourierTypeDecomposition}
Let $n \geq 2$  and let $\phi \in \mathcal{H}_n$.  Then  exist Schwartz functions $g_{\epsilon}: \mathbb{R}^{n-1} \rightarrow \mathbb{C}$, where~${\epsilon \in \{ 0, 1\}^n}$, such that for every $\xi_1, \ldots, \xi_{n}  \in \mathbb{R}  \backslash \{ 0 \}$   we have
 \[
\begin{split}
\phi(\xi_1, \ldots,  \xi_n) =  & \!\!\!\!\!\!  \sum_{ \epsilon  \in \{ 0, 1 \}^n } \left( \prod_{i=1}^n {\rm sign}( \xi_i )^{\epsilon_i}  \right)   \int_{\mathbb{R}} g_{\epsilon}(s_1, \ldots,  s_{n-1}) \\
& \qquad \times \qquad  \vert \xi_1 \vert^{ -is_1} \vert \xi_2 \vert^{ i(s_1 - s_2)}  \ldots   \vert \xi_{n-1} \vert^{i(s_{n-2}-s_{n-1})}  \vert \xi_n \vert^{ is_{n-1}}    ds_1 \ldots  ds_{n-1}.  
\end{split}
\]
\end{proposition}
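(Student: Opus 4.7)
The plan is to separate $\phi$ into sign components, convert each component to logarithmic variables adapted to the ratios $\eta_k := |\xi_{k+1}|/|\xi_k|$ ($k=1,\dotsc,n-1$), apply Fourier inversion, and then recover the claimed formula by unwinding the substitutions. First I would write the sign decomposition
\[
\phi(\xi) = \sum_{\epsilon \in \{0,1\}^n} \Big(\prod_{i=1}^n {\rm sign}(\xi_i)^{\epsilon_i}\Big)\, \phi_\epsilon(|\xi_1|, \ldots, |\xi_n|),
\]
where
\[
\phi_\epsilon(r_1, \ldots, r_n) := 2^{-n} \sum_{\sigma \in \{-1,+1\}^n} \Big(\prod_{i=1}^n \sigma_i^{\epsilon_i}\Big)\, \phi(\sigma_1 r_1, \ldots, \sigma_n r_n).
\]
This identity follows by expanding $\prod_i(1+\sigma_i\,{\rm sign}(\xi_i))$ as a sum over $\epsilon \in \{0,1\}^n$. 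Each $\phi_\epsilon$ is smooth and homogeneous of degree~$0$, inherits the support condition \eqref{Eqn=HnSuppConiditon} and hence vanishes in a neighborhood of $\{r_k=0\}$ for $k<n$, and vanishes on $\{r_n=0\}$ by \eqref{Eqn=HnSuppConiditonA}. By degree-$0$ homogeneity I set $\psi_\epsilon(\eta_1, \ldots, \eta_{n-1}) := \phi_\epsilon(1, \eta_1, \eta_1\eta_2, \ldots, \eta_1\cdots\eta_{n-1})$, so that $\phi_\epsilon(r) = \psi_\epsilon(r_2/r_1, \ldots, r_n/r_{n-1})$ whenever $r_1,\dotsc,r_{n-1}>0$.

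Next I would change to logarithmic variables $t_k := \log \eta_k$ and set $\widetilde{\psi_\epsilon}(t) := \psi_\epsilon(e^{t_1}, \ldots, e^{t_{n-1}})$. The crucial claim is that $\widetilde{\psi_\epsilon}$ is a Schwartz function on $\mathbb{R}^{n-1}$. For $k \leq n-2$, the argument of Proposition \ref{prop=exists_rho} applied to $\phi_\epsilon$ (which uses only \eqref{Eqn=HnSuppConiditon}) shows that $\psi_\epsilon$ is supported in $\eta_k \in [\delta, 1/\delta]$ for some $\delta>0$, hence $\widetilde{\psi_\epsilon}$ has compact support in $t_k$. In the direction $t_{n-1}$ the support is bounded on the right by the same argument but may extend to $-\infty$ as $\eta_{n-1}\to 0^+$; here \eqref{Eqn=HnSuppConiditonA} combined with smoothness of $\phi$ gives via Taylor expansion $\psi_\epsilon(\eta) = \eta_{n-1}\,h(\eta)$ for some $h$ that is smooth near $\eta_{n-1}=0$. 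Writing $\partial_{t_{n-1}}^m = (\eta_{n-1}\partial_{\eta_{n-1}})^m$ as a finite linear combination of operators $\eta_{n-1}^j \partial_{\eta_{n-1}}^j$ with $j\ge 1$, one verifies that every partial derivative of $\widetilde{\psi_\epsilon}$ is $O(e^{t_{n-1}})$ as $t_{n-1}\to-\infty$, uniformly in the other variables. Combined with compact support in the remaining directions, this yields the Schwartz property.

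Finally, Fourier inversion produces a Schwartz function $g_\epsilon$ on $\mathbb{R}^{n-1}$ such that $\widetilde{\psi_\epsilon}(t) = \int_{\mathbb{R}^{n-1}} g_\epsilon(s)\, e^{i s \cdot t}\, ds$. Reverting via $e^{is_k t_k} = |\eta_k|^{is_k} = |\xi_{k+1}|^{is_k}|\xi_k|^{-is_k}$ and telescoping the exponents over the shared factors $|\xi_k|$ gives
\[
\prod_{k=1}^{n-1} |\eta_k|^{is_k} = |\xi_1|^{-is_1}|\xi_2|^{i(s_1-s_2)}\cdots|\xi_{n-1}|^{i(s_{n-2}-s_{n-1})}|\xi_n|^{is_{n-1}},
\]
so that inserting back into the sign decomposition yields the claimed formula. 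The main obstacle is establishing the Schwartz property of $\widetilde{\psi_\epsilon}$ in the direction $t_{n-1}$: the vanishing hypothesis \eqref{Eqn=HnSuppConiditonA} is significantly weaker than the open-neighborhood vanishing available for the first $n-2$ ratios, so the required rapid decay must be extracted via the Taylor analysis above together with the operator identity for $\partial_{t_{n-1}}^m$.
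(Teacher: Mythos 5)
Your proposal is correct and follows essentially the same route as the paper: the same sign decomposition into components $\phi_\epsilon$, the same reduction to a function $\psi_\epsilon$ of the ratios via Proposition~\ref{prop=exists_rho}, the same logarithmic substitution, and Fourier inversion. The only (welcome) difference is in verifying the Schwartz property of $t\mapsto\psi_\epsilon(e^{t_1},\dotsc,e^{t_{n-1}})$: the paper bounds the function by $Ce^{t_1+\cdots+t_{n-1}}$ on $(-\infty,K]^{n-1}$ via one application of the fundamental theorem of calculus in $\xi_n$, whereas you isolate the single non-compact direction $t_{n-1}$ and control all derivatives there through the identity $\partial_{t_{n-1}}^m=(\eta_{n-1}\partial_{\eta_{n-1}})^m$ together with the factorization $\psi_\epsilon=\eta_{n-1}h$, which is slightly more explicit about the derivative decay needed to conclude that the inverse Fourier transform is Schwartz.
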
 
\begin{proof}
We start by writing the symbol $\phi$ as a sum of symbols that are even or odd homogeneous with respect to each of its coordinates. To this extent let $\epsilon = (\epsilon_1, \ldots, \epsilon_n) \in \{ 0, 1 \}^n$ and set
\[
\phi_{\epsilon_1, \ldots, \epsilon_n}(\xi_1, \ldots, \xi_n) 
= \frac{1}{2^n} \sum_{\sigma_1, \ldots, \sigma_n \in \{ +1, -1 \} }  \left( \prod_{i=1}^n   \sigma_i^{\epsilon_i}  \right)    \phi( \sigma_1 \xi_1, \ldots, \sigma_n \xi_n ). 
\]
It then follows that 
\begin{equation}\label{Eqn=Average}
\begin{split}
\phi(\xi_1, \ldots,  \xi_n) =  & \!\!\!\!\!\!  \sum_{ \epsilon_1, \ldots, \epsilon_n \in \{ 0, 1 \}  }     
\phi_{\epsilon_1, \ldots, \epsilon_n}(\xi_1, \ldots, \xi_n).
\end{split}
\end{equation}
Note that $\phi_{\epsilon_1, \ldots, \epsilon_n}(\xi_1, \ldots, \xi_n)$ is even (respectively odd) homogeneous in the variable $\xi_k$ in case~${\epsilon_k = 0}$ (respectively $\epsilon_k = 1$). In particular, for $\xi_1, \ldots, \xi_n \in \mathbb{R} \backslash \{ 0 \}$, 
\begin{equation}\label{Eqn=Homogenity}
\phi_{\epsilon_1, \ldots, \epsilon_n}(\xi_1, \ldots, \xi_n) = \left( \prod_{i=1}^n {\rm sign}( \xi_i )^{\epsilon_i}  \right)  \phi_{\epsilon_1, \ldots, \epsilon_n}(\vert \xi_1 \vert, \ldots, \vert \xi_n \vert).
\end{equation}
In turn, we may apply Proposition~\ref{prop=exists_rho}  to the function 
\[
\xi \mapsto \phi_{\epsilon_1, \ldots, \epsilon_n}(\vert \xi_1 \vert, \ldots, \vert \xi_n \vert),
\]
which gives a continuous, compactly supported function  $\psi_{\epsilon_1, \ldots, \epsilon_n}: \mathbb{R}^{n-1} \rightarrow \mathbb{C}$ such that for all~$\xi_1, \ldots, \xi_n \in \mathbb{R} \backslash \{ 0 \}$ we have 
\[
\phi_{\epsilon_1, \ldots, \epsilon_n}(\xi_1, \ldots, \xi_n) =  \left( \prod_{k=1}^n {\rm sign}( \xi_k )^{\epsilon_k}  \right)   \psi_{\epsilon_1, \ldots, \epsilon_n}(\frac{ \vert \xi_2 \vert}{\vert \xi_1 \vert}, \frac{ \vert \xi_3 \vert}{ \vert \xi_2 \vert}, \ldots, \frac{ \vert\xi_n \vert}{ \vert\xi_{n-1} \vert}).
\] 
Consider the function
\[
\widehat{g}_{\epsilon_1, \ldots, \epsilon_n}: \mathbb{R}^{n-1} \rightarrow \mathbb{C}:  (t_1, \ldots, t_{n-1}) \mapsto  \psi_{\epsilon_1, \ldots, \epsilon_n}(e^{t_1}, \ldots, e^{t_{n-1}}).
\]
\noindent {\bf Claim:}    $\widehat{g}_{\epsilon_1, \ldots, \epsilon_n}$ is a Schwartz function.

\vspace{0.3cm}

\noindent {\it Proof of the claim:} 
As $\psi_{\epsilon_1, \ldots, \epsilon_n}$ has compact support, see Proposition \ref{prop=exists_rho},  the support of $\widehat{g}_{\epsilon_1, \ldots, \epsilon_n}$ is contained in $(-\infty, K]^{n-1}$ for some $K> 0$.

Recall that 
\[
\psi_{\epsilon_1, \ldots, \epsilon_n}(e^{t_1}, \ldots, e^{t_{n-1}}) = 
\phi_{\epsilon_1, \ldots, \epsilon_n}(1, e^{t_1}, e^{t_1+t_2},  \ldots, e^{t_1 + \ldots + t_{n-1}})
\]
by Proposition~\ref{prop=exists_rho}. As $\phi_{\epsilon_1, \ldots, \epsilon_n}$ is smooth on $\mathbb{R}^{n} \backslash \{ 0 \}$, we find that  $\widehat{g}_{\epsilon_1, \ldots, \epsilon_n}$ is smooth. 
Further, recall that $\phi_{\epsilon_1, \ldots, \epsilon_n}(\xi_1, \ldots, \xi_{n-1}, 0)=0$ by assumption. By the fundamental theorem of calculus and smoothness of $\phi_{\epsilon_1, \ldots, \epsilon_n}$, we may write for $\xi_i \geq 0$,
\begin{equation*}
    \phi_{\epsilon_1, \ldots, \epsilon_n}(\xi_1, \ldots, \xi_{n-1}, \xi_n) = \xi_n \int_0^1  (\partial_{\xi_n} \phi_{\epsilon_1, \ldots, \epsilon_n})( \xi_1 , \xi_2 \ldots, t \xi_n) dt.
\end{equation*}
As $\phi$ is smooth   we see that on any compact neighbourhood $D$ of $(1,0,\dotsc,0)\in\mathbb{R}^n$ not containing~0,  
the latter integral has absolute value bounded by some constant $C$. We may apply this to any such $D$ that contains the image of the set $(-\infty, K]^{n-1}$ under the map
\[
t \mapsto (1, e^{t_1}, e^{t_1+t_2},  \ldots, e^{t_1 + \ldots + t_{n-1}}). 
\]
Hence altogether,  we see that whenever $t_1, \ldots, t_{n-1} \leq K$ then we have 
\begin{equation*}
   | \psi_{\epsilon_1, \ldots, \epsilon_n}(e^{t_1}, \ldots, e^{t_{n-1}}) \vert  \leq  e^{t_1 + \ldots + t_{n-1}} C.
\end{equation*}
The right hand side function in the latter equation decays faster than any polynomial on $(-\infty, K]^{n-1}$.

\vspace{0.3cm}

\noindent {\it Remainder of the proof:}  It follows that the inverse Fourier transform $g_\epsilon := g_{\epsilon_1, \ldots, \epsilon_n}$ of $\widehat{g}_{\epsilon_1, \ldots, \epsilon_n}$  is Schwartz and we have
\[
 \psi_{\epsilon_1, \ldots, \epsilon_n}(e^{t_1}, \ldots, e^{t_{n-1}}) = \int_{\mathbb{R}^{n-1}}  g_{\epsilon_1, \ldots, \epsilon_n}(s_1, \ldots, s_{n-1}) e^{i (s_1 t_1 + \ldots + s_{n-1}t_{n-1})} ds_1 \ldots ds_{n-1}. 
\] 
We evaluate this formula for $t_1 = \log(\frac{\vert \xi_2 \vert}{\vert \xi_1 \vert} ), \ldots, t_{n-1}  = \log(\frac{\vert \xi_n \vert}{\vert \xi_{n-1} \vert} )$,  $\xi_1, \ldots, \xi_n \in \mathbb{R} \backslash \{ 0\}$, which gives
\begin{equation}\label{Eqn=FourierExpansion}
\begin{split}
\phi_{\epsilon_1, \ldots, \epsilon_n}(\vert \xi_1 \vert, \ldots, \vert \xi_n \vert)  = & \psi_{\epsilon_1, \ldots, \epsilon_n}(\frac{\vert \xi_2 \vert}{\vert \xi_1 \vert}, \ldots, \frac{\vert \xi_{n} \vert}{\vert \xi_{n-1} \vert})  \\
= &   \int_{\mathbb{R}^{n}} g_{\epsilon_1, \ldots, \epsilon_n}(s_1, \ldots,  s_{n-1}) \vert \xi_1 \vert^{-i s_1} \vert \xi_2 \vert^{i(s_1 - s_2)}  \ldots  \\
& \qquad \times \qquad \vert \xi_{n-1} \vert^{i (s_{n-2} - s_{n-1})}   \vert \xi_{n} \vert^{i s_{n-1} }   ds_1 \ldots ds_{n-1}.  
\end{split}
\end{equation}
The proposition now follows from \eqref{Eqn=Average}, \eqref{Eqn=Homogenity} and \eqref{Eqn=FourierExpansion}.  
\end{proof}

\section{Decomposition of divided differences}\label{Sect=DivDiffDecomposition}

{In this section we decompose divided differences as a linear combination of products of: (i) a smooth homogeneous function depending only on the coordinate differences and satisfying the conditions of Theorem \ref{Prop=FourierTypeDecomposition}, and (ii) the same divided difference but with some of the input variables repeating.  We do this inductively and reduce the number of input variables in   (ii) until only two variables remain. } 

As our main result is proved in the linear case we focus here on the multilinear case only. Therefore:

 \vspace{0.3cm}
 
 \noindent {\bf Convention.} Throughout this section fix $n \in \mathbb{N}$, $n \geq 2$. 

\subsection{Outline of the proof}
In spirit, our proof follows the structure as in~\cite{CaspersReimann}, and to a smaller degree also \cite{PSS-Inventiones}. However, both the fact that we treat the $n$-linear case and our aim for optimal constants require a significant number of extra technical and conceptual obstacles to overcome. This also leads to a fairly technical and notation-heavy construction, hence we give a formal overview of the argument here.
 
 \vspace{0.3cm}
 
We will repeatedly apply the reduction formula~\eqref{eqn: general_decomp_divdiff}. However, the fractions that appear as a byproduct of this reduction formula are unbounded as functions in ${(\lambda_0,\dotsc,\lambda_n)\in\mathbb{R}^{n+1}\setminus\{0\}}$, which we need to control by only considering them on certain domains.  Therefore, similar to~\cite[Section~3]{CaspersReimann}, we construct a smooth partition of unity of Toeplitz form to control the fractions appearing in~\eqref{eqn: general_decomp_divdiff}.  

  \vspace{0.3cm}
  
More precisely, note that each time we apply Lemma~\ref{lemma: reduction formula divdiff} to a divided difference 
\[
f^{[n]}[\lambda_{i_0}^{(\alpha_{i_0})},\dotsc,\lambda_{i_k}^{(\alpha_{i_k})}],
\]
we need to make a choice of indices $i,j\in\{i_0,\dotsc,i_k\}$, as well as of the $\xi\in\mathbb{R}$ which we insert into the divided difference. Here, we will always choose $i=i_{l-1}$, $j=i_l$ and $\xi=\lambda_{i_{l+1}}$,  with cyclic numbering of the indices, which reduces our choices to choosing some $l\in\{0,\dotsc,k\}$.  Moreover, note that each application of Lemma~\ref{lemma: reduction formula divdiff} to a divided difference in variables $I:=\{\lambda_{i_0},\dotsc,\lambda_{i_k}\}$ results in a collection of divided differences which take as input the variables in either one of the two sets $I_+:=I\setminus\{\lambda_j\}$ and $I_{-}:=I\setminus\{\lambda_i\}$. We repeatedly apply this procedure of selecting one of the indices $i_l$ and then one of the branches $\pm$ on which we continue our decomposition. 

 \vspace{0.3cm}
 
We encode each possible string of choices in a sequence $F=(i_1,\sigma_1,i_2,\sigma_2,\dots,i_{n-1},\sigma_{n-1})$, where each $i_l$ denotes our chosen index, and $\sigma_l=\pm$ encodes whether in the next step we will treat the obtained divided differences on the variables $I_+$ or $I_-$. Note that it is possible to arrive at the same set of input variables through different choices of indices in previous applications of the lemma. The advantage of our construction is that at each step $l$, the sequence $F$ tells us not only the set of input variables we are considering, but also the previous decomposition choices, which allows us to keep track of the fractions that accompany the divided differences.
 
\vspace{0.3cm}
 
Let $\mathcal{F}_{n,n-1}$ denote the collection of all such choice sequences $F$ as in the previous paragraph. In Section~\ref{subsec: partition_of_unity}, we construct a partition of unity $(U_F)_{F\in\mathcal{F}_{n,n-1}}$ adapted to the choice sequences in the following way.  Suppose   some $F\in\mathcal{F}_{n,n-1}$ is given.  If a product of a divided difference with two distinct variables with Toeplitz-form fractions is such that it can be obtained by repeatedly applying Lemma~\ref{lemma: reduction formula divdiff} to $f^{[n]}$, with the choices of the index $i_l$ at each step prescribed by $F$, then all fractions in this product will be bounded on $U_F$.
 
\vspace{0.3cm}

After establishing some notation in Section~\ref{subsec: sec_5_aux_function}, we prove in Section~\ref{Sect=Reduction}   that one can indeed decompose $f^{[n]}$ restricted to $U_F$ into  
a linear combination of products of homogeneous multivariable functions depending on coordinate differences (i.e.\ of Toeplitz form) and  two-variable $n$th order divided differences. We can do this in such a way that the homogeneous part satisfies the conditions of Proposition~\ref{prop=exists_rho} and in such a way that the variables of the homogeneous function match the variables of the divided difference (see  Remark \ref{Rmk=Matching}). Both these facts are crucial towards our proof of the sharp boundedness of the Schur multiplier with symbol $f^{[n]}$ in Section~\ref{sect: upper_bound_main_result}.

\subsection{Partitions of unity}\label{subsec: partition_of_unity}   For $2 \leq  k \leq n$ we consider  $\Sphere^{k-1}$ to be the $k-1$-sphere 
  \[
  \Sphere^{k-1} = \{ \xi \in \mathbb{R}^k \mid \Vert \xi \Vert_2 = 1\}.
  \]
We start by constructing  coverings of $\Sphere^{k-1}$ and  $\mathbb{R}^{k} \backslash \{ 0 \}$ as follows. Let  $\varepsilon > 0$ and $1 \leq l \leq k$.  Set  
\[
\begin{split}
\widetilde{U}_{k,l, \varepsilon} := & \{ \xi \in \Sphere^{k-1} \backslash \{ 0 \}  \mid  \forall \: 1 \leq  b \leq k:  \vert    \xi_{b} \vert < (1+ \varepsilon)  \vert    \xi_{l} \vert   \}, \\
\mathbb{R}_{> 0} \cdot  \widetilde{U}_{k,l, \varepsilon} := &   \{ \xi \in \mathbb{R}^{k} \backslash \{ 0 \}  \mid   \forall \: 1 \leq  b \leq k:  \vert    \xi_{b} \vert < (1+ \varepsilon)  \vert    \xi_{l} \vert      \}.
\end{split}
\] 
Let  $I \subseteq \{ 0, \ldots, n\}$ be a set with at least 3 elements. 
  We define the space of vectors in $\mathbb{R}^{n+1}$ whose coordinates indexed by $I$ are all the same as 
\[
\Delta_{I,n}  := \left\{ (\lambda_0, \ldots, \lambda_n) \in \mathbb{R}^{n+1} \mid \forall i,j \in I: \lambda_i = \lambda_j  \right\}.
\]
Now let $\vert I \vert=k+1$ and write   
\begin{equation}\label{Eqn=LabelsofI}
I = \{ i_0 < i_1 < \ldots < i_{k} \}.
\end{equation} 
For $i_l   \in I \backslash \{ i_0 \}$ and   $\varepsilon > 0$ we set 
\[
\begin{split}
  U_{I, i_l, \varepsilon} := & 
\left\{
(\lambda_0, \ldots, \lambda_n) \in  \mathbb{R}^{n+1} \backslash \Delta_{I,n}     \mid \forall \:  i_a \in I \backslash \{ i_0 \}:  \vert  \lambda_{i_a} -  \lambda_{i_{a-1}}  \vert < (1+ \varepsilon)     \vert \lambda_{i_l} - \lambda_{i_{l-1}} \vert
\right\}. 
\end{split}
\]
 Note that these sets are open subsets of $\mathbb{R}^{n+1} \backslash \Delta_{I,n}$. The map 
\begin{equation}\label{Eqn=QMapNew}
Q_I: \mathbb{R}^{n+1} \backslash \Delta_{I,n} \rightarrow \mathbb{R}^k \backslash \{ 0 \}: (\lambda_0, \ldots, \lambda_n) \mapsto (\lambda_{i_1} - \lambda_{i_0}, \ldots, \lambda_{i_k} - \lambda_{i_{k-1}}),
\end{equation}
is a surjection mapping $U_{I, i_l, \varepsilon}$ to  $\mathbb{R}_{>0} \cdot \widetilde{U}_{k,l, \varepsilon}$. In the case $\varepsilon = 1$, set 
\[
\widetilde{U}_{I,l} := \widetilde{U}_{I,l, 1}, \quad \mathbb{R}_{> 0} \cdot \widetilde{U}_{I,l} :=  \mathbb{R}_{> 0} \cdot \widetilde{U}_{I,l, 1}, \quad U_{I,i_l} :=  U_{I,i_l, 1}. 
\]
The proofs below work for every $\varepsilon > 0$ but after the next lemma we simply work with  $\varepsilon = 1$. This convention is comparable to the fact that \eqref{Eqn=TheoryOfEverything} is proved for $0<\lambda < (1+\varepsilon) \mu$ with $\varepsilon = 1$.

\begin{lemma}\label{Lem=CoversItSphere}
The following holds true. 
\begin{enumerate}
    \item \label{Item=Cover1} The sets $(\widetilde{U}_{k, l, \varepsilon})_{ 1 \leq l \leq k }$ cover $\Sphere^{k-1}$.
    \item \label{Item=Cover1b} The sets $(\mathbb{R}_{>0} \cdot \widetilde{U}_{k, l, \varepsilon})_{ 1 \leq l \leq k }$ cover $\mathbb{R}^{k} \backslash \{ 0 \}$. 
        \item \label{Item=Cover2} The sets $(U_{I, i_l, \varepsilon})_{ 1\leq l \leq k }$ cover $\mathbb{R}^{n+1} \backslash \Delta_{I,n}$.
\end{enumerate} 
\end{lemma}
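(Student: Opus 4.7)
The plan is to handle (1) first by a direct maximum-coordinate argument, then deduce (2) by homogeneity, and finally obtain (3) by pulling back along the surjection $Q_I$ of \eqref{Eqn=QMapNew}.

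For (1), given $\xi\in\Sphere^{k-1}$, choose $l\in\{1,\ldots,k\}$ with $|\xi_l|=\max_{1\le b\le k}|\xi_b|$. Since $\|\xi\|_2=1$, in particular $|\xi_l|>0$, so
\[
|\xi_b|\le |\xi_l|<(1+\varepsilon)|\xi_l| \qquad\text{for all }1\le b\le k,
\]
which means exactly $\xi\in\widetilde{U}_{k,l,\varepsilon}$.

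For (2), let $\xi\in\mathbb{R}^k\backslash\{0\}$ and apply the same index choice: choose $l$ with $|\xi_l|=\max_b|\xi_b|>0$. The defining inequalities of $\mathbb{R}_{>0}\cdot\widetilde{U}_{k,l,\varepsilon}$ are invariant under positive scaling, and the same estimate as above gives $\xi\in\mathbb{R}_{>0}\cdot\widetilde{U}_{k,l,\varepsilon}$. (Equivalently, $\xi/\|\xi\|_2\in\widetilde{U}_{k,l,\varepsilon}$ by (1), and then $\xi=\|\xi\|_2\cdot(\xi/\|\xi\|_2)$ lies in the positive cone over $\widetilde{U}_{k,l,\varepsilon}$.)

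For (3), take $\lambda=(\lambda_0,\ldots,\lambda_n)\in\mathbb{R}^{n+1}\backslash\Delta_{I,n}$ and use the labeling \eqref{Eqn=LabelsofI}. Since $\lambda\notin\Delta_{I,n}$, not all coordinates indexed by $I$ coincide, hence there is some $a\in\{1,\ldots,k\}$ with $\lambda_{i_a}\neq\lambda_{i_{a-1}}$; equivalently, $Q_I(\lambda)\in\mathbb{R}^k\backslash\{0\}$. By (2) applied to $Q_I(\lambda)$, there is some $1\le l\le k$ such that $Q_I(\lambda)\in\mathbb{R}_{>0}\cdot\widetilde{U}_{k,l,\varepsilon}$, i.e.\ $|\lambda_{i_a}-\lambda_{i_{a-1}}|<(1+\varepsilon)|\lambda_{i_l}-\lambda_{i_{l-1}}|$ for every $a$. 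This is precisely the defining condition of $U_{I,i_l,\varepsilon}$, so $\lambda\in U_{I,i_l,\varepsilon}$. There is no real obstacle here: the whole content is the max-coordinate choice, which transfers from $\Sphere^{k-1}$ to its positive cone and then back to $\mathbb{R}^{n+1}\backslash\Delta_{I,n}$ via $Q_I$.
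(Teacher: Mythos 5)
Your proof is correct and follows essentially the same route as the paper: choose the index of the largest coordinate for (1), extend to the cone by scale-invariance for (2), and pull back along the surjection $Q_I$ for (3). The only extra content you supply, correctly, is the explicit check that $\lambda\notin\Delta_{I,n}$ forces $Q_I(\lambda)\neq 0$ and that the defining inequalities match under $Q_I$.
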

\begin{proof}
\eqref{Item=Cover1} For $\xi \in \Sphere^{k-1}$ we let $1 \leq l \leq k$ be such that $\vert \xi_l \vert$ is largest. Then $\xi \in \widetilde{U}_{k, l, \varepsilon}$. The proof of \eqref{Item=Cover1b} is similar. \eqref{Item=Cover2}  follows from  \eqref{Item=Cover1b} and the fact that $Q_I$ given in  \eqref{Eqn=QMapNew} is a continuous surjection mapping $U_{I,i_l, \varepsilon}$ onto  $\mathbb{R}_{>0} \cdot \widetilde{U}_{k,l, \varepsilon}$. 
\end{proof}

The following figure gives a useful visualization of some of these sets.

\begin{figure}[ht]
    \centering
    \includegraphics[scale=0.18]{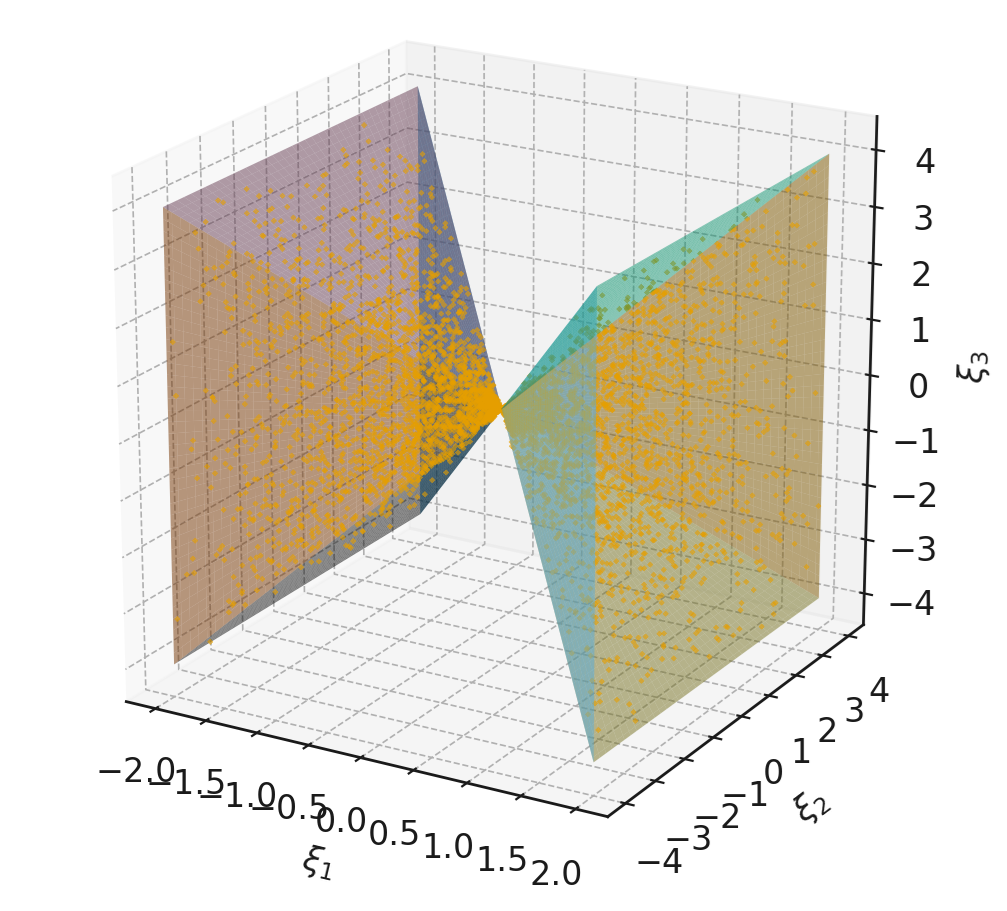}    \includegraphics[scale=0.13]{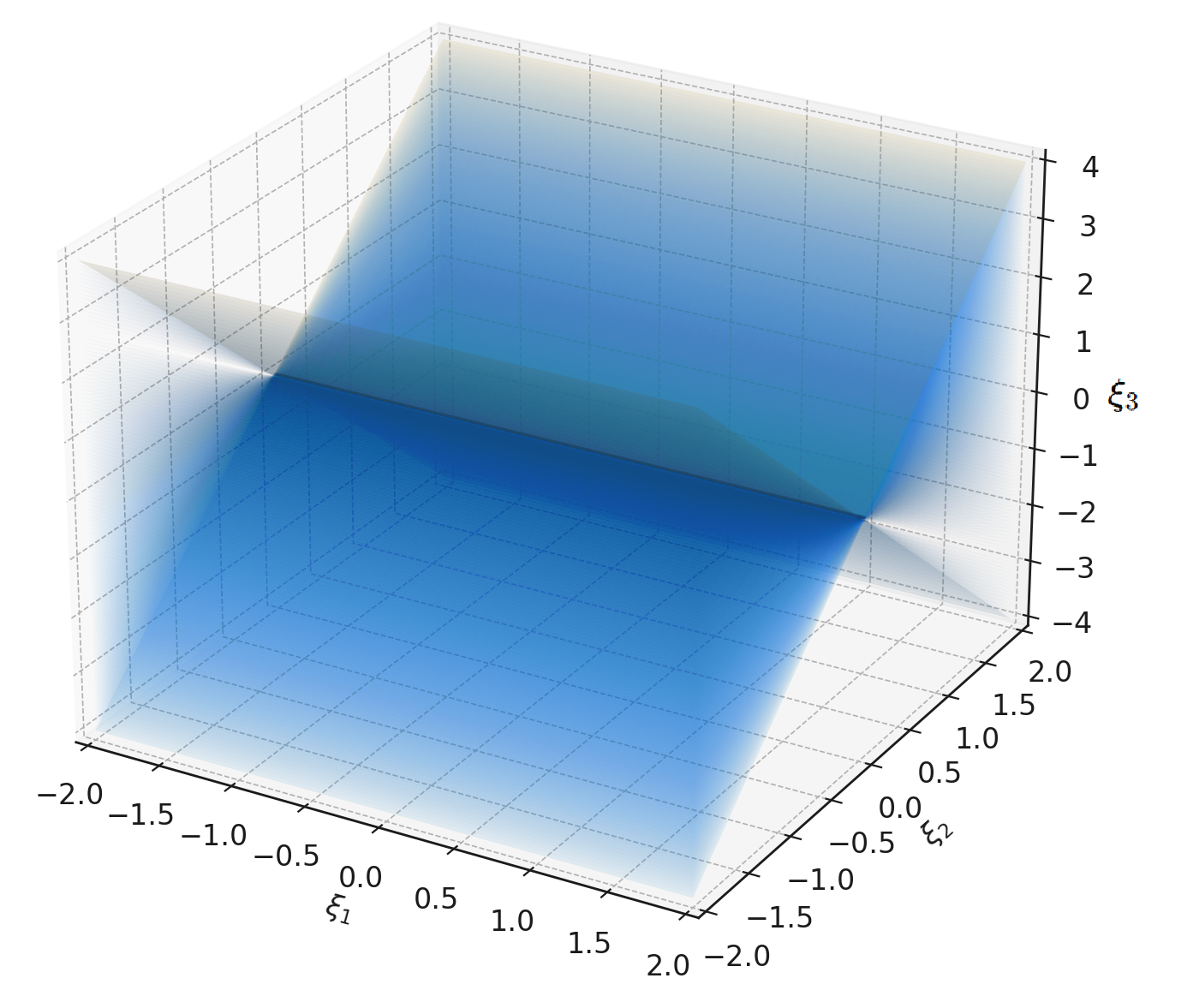} \includegraphics[scale=0.13]{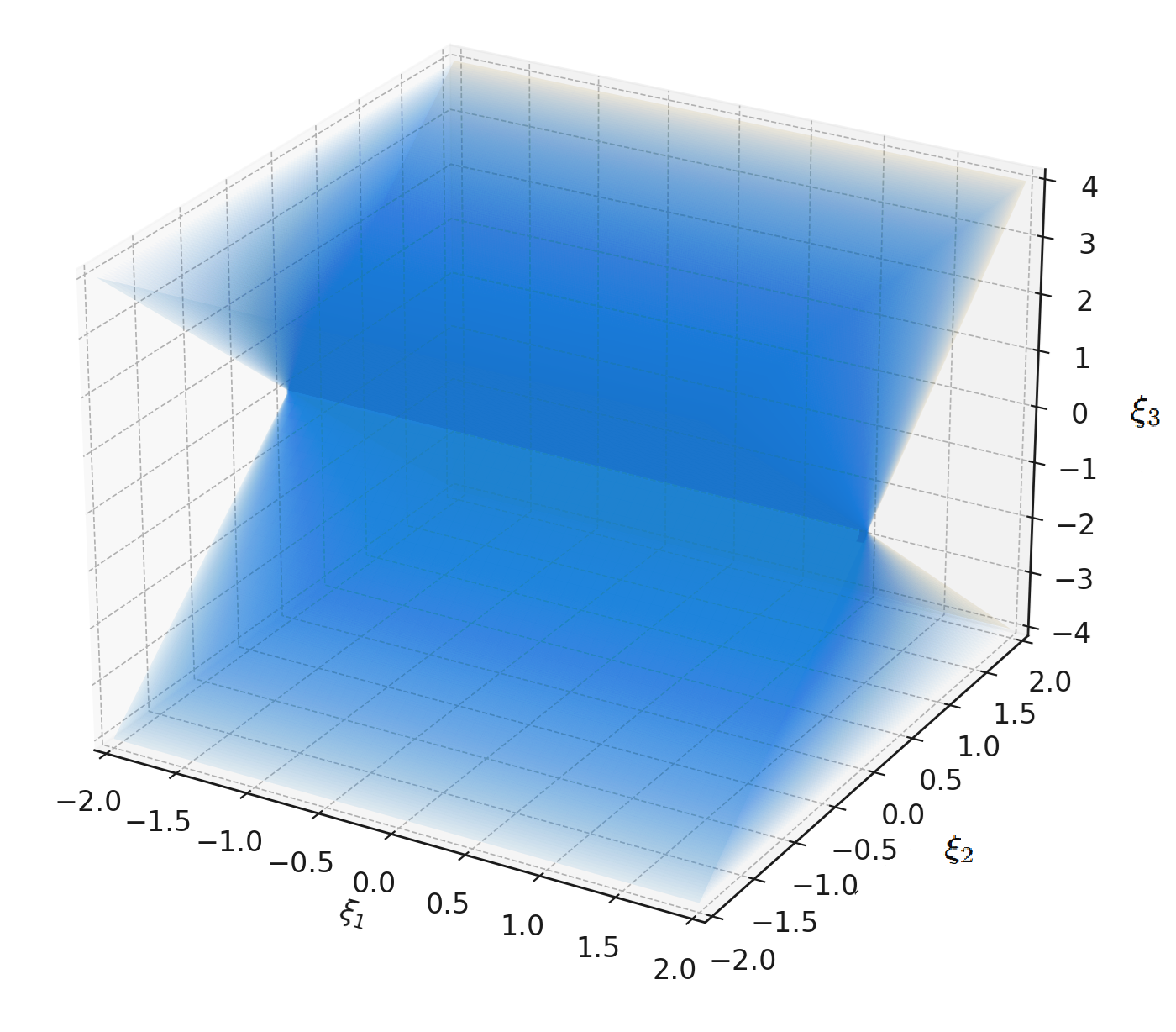}
    \caption{ {From left to right, pictures of the sets:
    \[
    \begin{split}
        V_1 & = \{ (\xi_1, \xi_2, \xi_3) \in \mathbb{R}^3 \mid \vert \xi_2 \vert < 2 \vert \xi_1 \vert,  \vert \xi_3 \vert < 2 \vert \xi_1 \vert \}, \\
        V_2 & =\{ (\xi_1, \xi_2, \xi_3) \in \mathbb{R}^3 \mid \vert \xi_3 \vert < 2 \vert \xi_2 \vert  \}, \\
        V_3 &  =\{ (\xi_1, \xi_2, \xi_3)  \in \mathbb{R}^3 \mid \vert \xi_2 \vert < 2 \vert \xi_3 \vert  \}.
        \end{split}
    \]
    We have 
    \[
    \begin{split}
    U_{\{0,1,2,3\}, 1} = & \{ \lambda \in \mathbb{R}^3 \mid (\lambda_1 - \lambda_0, \lambda_2 - \lambda_1, \lambda_3 - \lambda_2) \in V_1 \},\\
        U_{\{1,2,3\}, 2} = & \{ \lambda \in \mathbb{R}^3 \mid (\lambda_1 - \lambda_0, \lambda_2 - \lambda_1, \lambda_3 - \lambda_2) \in V_2 \},\\
                U_{\{1,2,3\}, 3} = & \{ \lambda \in \mathbb{R}^3 \mid (\lambda_1 - \lambda_0, \lambda_2 - \lambda_1, \lambda_3 - \lambda_2) \in V_3 \}.\\
    \end{split}
    \] 
    }
   }

\end{figure}

\begin{definition}[Partitions of unity for divided differences]\label{Dfn=POI}
Let $(\widetilde{\theta}_{k,l})_{1 \leq l \leq k}$ be a partition of unity of smooth functions $\Sphere^{k-1} \rightarrow [0,1]$ subordinate to the cover $(\widetilde{U}_{k,l})_{ 1 \leq l \leq k}$. For  $\xi \in \mathbb{R}^k \backslash \{ 0 \} $ set 
\[
\widetilde{\theta}_{k,l, \mathbb{R}_{>0}}(\xi) :=   \widetilde{\theta}_{k,l}  ( \xi /   \Vert \xi \Vert_2  ).
\]
For  $I$  as in \eqref{Eqn=LabelsofI} and for  $\lambda \in \mathbb{R}^{n+1} \backslash \Delta_{I,n}$, we set 
\[
\begin{split} 
\theta_{I,i_{l}}(\lambda) :=    \widetilde{\theta}_{k,l, \mathbb{R}_{>0}}(Q_I(\lambda)) = \widetilde{\theta}_{k,l, \mathbb{R}_{>0}}(\lambda_{i_1} - \lambda_{i_0}, \ldots, \lambda_{i_k} - \lambda_{i_{k-1}}).
\end{split}
\]  
\end{definition}

\begin{lemma}
The functions  $(\theta_{I,i_l})_{ 1 \leq l \leq k}$ form a partition of unity of functions subordinate to the cover $(U_{I,i_l})_{1 \leq l \leq k}$ of $\mathbb{R}^{n+1} \backslash \Delta_{I,n}$ that are moreover smooth on their domain $\mathbb{R}^{n+1} \backslash \Delta_{I,n}$. 
\end{lemma}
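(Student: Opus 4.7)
The plan is to unpack the definitions and verify the three required properties of a partition of unity subordinate to the cover: the sum equals $1$, each $\theta_{I,i_l}$ is supported in $U_{I,i_l}$, and each $\theta_{I,i_l}$ is smooth on $\mathbb{R}^{n+1}\setminus\Delta_{I,n}$. The key observation that unifies all three steps is the factorisation
\[
\theta_{I,i_l}(\lambda) = \widetilde{\theta}_{k,l,\mathbb{R}_{>0}}(Q_I(\lambda)),
\]
where $Q_I:\mathbb{R}^{n+1}\setminus\Delta_{I,n}\to\mathbb{R}^k\setminus\{0\}$ is the surjection from \eqref{Eqn=QMapNew}, together with the fact that $Q_I$ is linear and maps into $\mathbb{R}^k\setminus\{0\}$ by construction.

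For smoothness, I would first argue that $\widetilde{\theta}_{k,l,\mathbb{R}_{>0}}$ is smooth on $\mathbb{R}^k\setminus\{0\}$, being the composition of the smooth normalisation $\xi\mapsto \xi/\Vert\xi\Vert_2$ with the smooth function $\widetilde{\theta}_{k,l}$ on $\mathbb{S}^{k-1}$. Since $Q_I$ is linear (hence smooth) and takes values in $\mathbb{R}^k\setminus\{0\}$ on all of $\mathbb{R}^{n+1}\setminus\Delta_{I,n}$, the composition $\theta_{I,i_l}=\widetilde{\theta}_{k,l,\mathbb{R}_{>0}}\circ Q_I$ is smooth on $\mathbb{R}^{n+1}\setminus\Delta_{I,n}$.

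For the partition-of-unity property, for each $\lambda\in\mathbb{R}^{n+1}\setminus\Delta_{I,n}$ we have $Q_I(\lambda)/\Vert Q_I(\lambda)\Vert_2\in\mathbb{S}^{k-1}$, so using that $(\widetilde{\theta}_{k,l})_{1\le l\le k}$ is a partition of unity on $\mathbb{S}^{k-1}$,
\[
\sum_{l=1}^k \theta_{I,i_l}(\lambda) = \sum_{l=1}^k \widetilde{\theta}_{k,l}\!\left(\frac{Q_I(\lambda)}{\Vert Q_I(\lambda)\Vert_2}\right) = 1.
\]

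For subordination, the main point is the identity
\[
Q_I^{-1}(\mathbb{R}_{>0}\cdot\widetilde{U}_{k,l}) = U_{I,i_l}
\]
inside $\mathbb{R}^{n+1}\setminus\Delta_{I,n}$, which follows by comparing the defining inequalities: both sets are characterised by $|\lambda_{i_a}-\lambda_{i_{a-1}}|<2|\lambda_{i_l}-\lambda_{i_{l-1}}|$ for every $a=1,\dotsc,k$. Since $\widetilde{\theta}_{k,l}$ is subordinate to $\widetilde{U}_{k,l}$, homogeneity yields $\operatorname{supp}(\widetilde{\theta}_{k,l,\mathbb{R}_{>0}})\subseteq\mathbb{R}_{>0}\cdot\widetilde{U}_{k,l}$, and pulling back through the continuous map $Q_I$ gives $\operatorname{supp}(\theta_{I,i_l})\subseteq U_{I,i_l}$.

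The proof is essentially notational, and there is no real obstacle: every property of $\theta_{I,i_l}$ is inherited from the corresponding property of $\widetilde{\theta}_{k,l}$ via the smooth surjection $Q_I$. The only step that requires a moment of verification is the equality $Q_I^{-1}(\mathbb{R}_{>0}\cdot\widetilde{U}_{k,l})=U_{I,i_l}$, and this is immediate from the construction of the two families of sets.
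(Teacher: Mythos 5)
Your proposal is correct and follows essentially the same route as the paper: both arguments reduce everything to the partition of unity $(\widetilde{\theta}_{k,l})_l$ on $\Sphere^{k-1}$ via the factorisation $\theta_{I,i_l}=\widetilde{\theta}_{k,l,\mathbb{R}_{>0}}\circ Q_I$ together with the fact that $Q_I$ maps $U_{I,i_l}$ onto $\mathbb{R}_{>0}\cdot\widetilde{U}_{k,l}$. Your write-up is somewhat more explicit about the verification of the three properties, but there is no substantive difference.
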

\begin{proof}
The cover  $(\widetilde{\theta}_{k,l, \mathbb{R}_{>0}})_{ 1 \leq l \leq k}$ is a partition of unity subordinate to $( \mathbb{R}_{>0} \cdot \widetilde{U}_{k,l})_{1 \leq l \leq k}$ as it is a homogeneous extension of the partition of unity $(\widetilde{\theta}_{k,l})_{1 \leq l \leq k }$ of $(\widetilde{U}_{k,l})_{ 1 \leq l \leq k}$. Then as $Q_I$ in \eqref{Eqn=QMapNew} is a continuous surjection mapping $U_{I,i_l}$ onto $\mathbb{R}_{>0} \cdot \widetilde{U}_{k,l}$ it follows that  $(\theta_{I,i_l} = \widetilde{\theta}_{k,l, \mathbb{R}_{>0}} \circ Q_I)_{ 1 \leq l \leq k}$ is a partition of unity subordinate to  $(U_{I,i_l})_{1 \leq l \leq k}$.  Finally, $\theta_{I,i_l}$ is smooth as both $Q_I$ and $\widetilde{\theta}_{k,l, \mathbb{R}_{>0}}$ are smooth. 
\end{proof}

\subsection{Auxiliary functions}\label{subsec: sec_5_aux_function}

\begin{definition}\label{Dfn=FSets}
Let $0 \leq k \leq n-1$. We define $\mathcal{F}_{n,k}$ as the set of all sequences
\[
F = (i_1, \sigma_1,  i_2, \sigma_2,  \ldots, i_k,  \sigma_{k})
\]
where $\sigma_1, \ldots, \sigma_k \in \{ +, - \}$ and where $i_1, \ldots, i_k \in \{0, \ldots, n\}$ satisfy the following inductive properties:
\begin{enumerate}
    \item we define $I_{F,1} := \{ 0, \ldots, n\}$ and let  $i_1$ be an   element of  $I_{F,1}$ not equal to 0,   
    \item for $1 \leq l \leq k$ we define inductively 
    \[ 
I_{F,l+1} := \left\{
\begin{array}{ll}
I_{F,l} \backslash  j_s  & \textrm{if } \sigma_{l} = +, \\
I_{F,l} \backslash   j_{s-1} & \textrm{if } \sigma_{l} = -, \\
\end{array}
\right. \qquad
\]
where  $I_{F,l} = \{  j_0 < \ldots < j_{n+1-l}  \}$ and $j_s = i_l$,
\item  let $i_{l+1}$ be an element of $I_{F,l+1}$ not equal to the smallest element of  $I_{F,l+1}$. 
\end{enumerate}
 We will write 
\[
F_l := j_s =i_l, \quad F_{l}^- := j_{s-1}, \quad F_{l}^+ := j_{s+1}.
\] 
In other words, $F_l^-$  is the largest index in $I_{F,l}$ that is smaller than $i_l$ and $F_l^+$ is the smallest index in $I_{F,l}$ that is larger than $i_l$. If $s = n+1-l$, i.e.\ if $F_l$ is the largest element of $I_{F,l}$, we use the cyclic definition  $F_{l}^+ = j_{0}$. In case $k=l = n-1$, the set $I_{F, n}$ consists of only two elements and we write $I_{F, n} = \{ F_{n}^-, F_{n} \}$. The symbol $F_n^+$ shall never occur in our expressions.  
\end{definition}
  
\begin{example}
Consider $n = 4$, $k=3$, and let $F= (1, +, 2,-, 4,-)$. Then we have 
\[
\begin{array}{ll}
I_{F,1} =  \{0, 1, 2, 3, 4\}, & F_1 = 1,\; F_1^- = 0,\; F_1^+ =2, \\
I_{F,2} =  \{0,  2, 3, 4\},   & F_2 = 2,\; F_2^- = 0,\; F_2^+ =3, \\
I_{F,3} =  \{  2, 3, 4\},     & F_3 = 4,\; F_3^- = 3,\; F_3^+ =2, \\
I_{F,4} =  \{   2, 4\},       & F_4 = 4,\; F_4^- = 2.  \\
\end{array}
\]
Intuitively, $F$ describes a procedure in which every $F_l$ is selected from the set $I_{F,l}$ such that it is not its smallest element, after which either $F_l$ (if $\sigma_l = +$) or $F_l^-$ (if $\sigma_l = -$) is deleted from  $I_{F,l}$ to obtain the set $I_{F, l + 1}$.  
\end{example}
 
 \begin{definition}[Coordinate differences and their fractions]
For $0 \leq  a, b   \leq n$ and $a \not = b$ we set 
\[
\xi_{a,b}(\lambda ) := \lambda_a - \lambda_b, \quad   \lambda \in \mathbb{R}^{n+1}. 
\] 
Let $F \in\mathcal{F}_{n,k}$ with $1 \leq k \leq n-1$. For $1 \leq l \leq k+1$ and  $\lambda \in \mathbb{R}^{n+1}$ we set
\begin{equation}\label{Eqn=XiFRhoLNew}
\xi_{F,  l}(\lambda)  := \xi_{ F_l,  F_l^-} = \lambda_{F_l}   - \lambda_{ F_l^- }.
\end{equation} 
 Moreover, for $1 \leq l \leq k$ and  $\lambda \in \mathbb{R}^{n+1}$ with $\lambda_{F_l}   \not =  \lambda_{ F_l^- }$ we set
\begin{equation}\label{Eqn=ZetaFRhoL}
\zeta_{F,l}(\lambda) := 
\left\{
\begin{array}{ll}
\frac{\xi_{F_l^+, F_l^-}}{\xi_{F_l, F_l^-}}  =  \frac{\lambda_{F_l^+} - \lambda_{F_l^-}  }{ \lambda_{F_l} -  \lambda_{F_l^-} }, & \textrm{ if } \sigma_{l}= +, \\
\frac{\xi_{F_l, F_l^+}}{\xi_{F_l, F_l^-}}  =  \frac{\lambda_{F_l} - \lambda_{F_l^+}  }{ \lambda_{F_l} -  \lambda_{F_l^- } },  & \textrm{ if } \sigma_{l} = -. \\
\end{array}
\right.
\end{equation}
Note that for $k = n-1$ we obtain $\zeta_{F, n-1} = \frac{\xi_{F,n}}{\xi_{F, n-1}}$. 
\end{definition}

\begin{definition}[{Divided differences with repeating variables}]\label{def: truncated_int_polyn_momenta}
 Given a set ${I \subseteq \{0, \ldots, n\}}$ and a multi-index $\alpha \in \NNOne^{I}$ we use the notation 
 \[
 \lambda_I^{ (\alpha) } := (\lambda_{i}^{(\alpha_i)} ; i \in I).
 \]
 In particular, let $F \in \mathcal{F}_{n,k}$ with $1 \leq k \leq n-1$ and let $\alpha \in \NNOne^{I_{F, k}}$ be such that $\vert \alpha \vert   = n+1$. Then 
 \[
 f[ \lambda_{ I_{F,k}}^{(\alpha)}   ] := f[\lambda_i^{(\alpha_i)} ; i \in I_{F,k} ]
 \]
 as a function of $\lambda \in \mathbb{R}^{n+1}$. 
\end{definition}

The following lemma shows that for every $F \in \mathcal{F}_{n,n-1}$ and $1 \leq k \leq   n$, the collection of difference functions $\{ \xi_{F,l} \mid k \leq l \leq n \}$ forms a basis of all difference functions on $\mathbb{R}^{I_{F,k}}$ as a real vector space. 

\begin{lemma}\label{Lem=SufficientDifferences}
Let $F \in \mathcal{F}_{n,n-1}$ and take $1 \leq k \leq n$. Let  $a,b \in I_{F,k}$, $a \not = b$. The function $\xi_{a,b}$ lies in the {(real)} linear span  of the functions $\xi_{F, l}$, $k \leq l \leq n$. 
\end{lemma}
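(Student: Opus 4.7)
The plan is to prove this by downward induction on $k$, starting from $k = n$ and going down to $k = 1$. The key observation is that the cardinality of $I_{F,k}$ equals $n+2-k$, so the space of coordinate differences on $I_{F,k}$ has dimension $n+1-k$, which matches exactly the number of functions $\xi_{F,k}, \xi_{F,k+1}, \ldots, \xi_{F,n}$. Therefore it will suffice to show spanning, and the argument is essentially a telescoping trick.

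For the base case $k = n$, the set $I_{F,n} = \{F_n^-, F_n\}$ has only two elements, so every nonzero coordinate difference $\xi_{a,b}$ on $I_{F,n}$ is either $\xi_{F,n}$ or $-\xi_{F,n}$, and the claim is immediate.

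For the inductive step, assume the claim for index $k+1$ where $1 \leq k \leq n-1$, and consider $I_{F,k}$. By Definition~\ref{Dfn=FSets}, $I_{F,k+1}$ is obtained from $I_{F,k}$ by deleting exactly one element: namely $F_k$ if $\sigma_k = +$, and $F_k^-$ if $\sigma_k = -$. Call this deleted element $x$, and let $z$ be the other endpoint of $\xi_{F,k}$; that is, $z = F_k^-$ if $\sigma_k = +$, and $z = F_k$ if $\sigma_k = -$. In either case $z \in I_{F,k+1}$ and
\[
\xi_{F,k} = \lambda_{F_k} - \lambda_{F_k^-} = \pm(\lambda_x - \lambda_z),
\]
with the sign being $+$ if $\sigma_k = +$ and $-$ if $\sigma_k = -$. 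Now take any $a,b \in I_{F,k}$ with $a \neq b$. If both $a$ and $b$ lie in $I_{F,k+1}$, then $\xi_{a,b}$ is already in the span of $\{\xi_{F,l} \mid k+1 \leq l \leq n\}$ by the inductive hypothesis. Otherwise, exactly one of them equals $x$; assume without loss of generality that $a = x$ and $b \in I_{F,k+1}$. Then we decompose
\[
\xi_{a,b}(\lambda) = (\lambda_x - \lambda_z) + (\lambda_z - \lambda_b) = \pm \xi_{F,k}(\lambda) + \xi_{z,b}(\lambda),
\]
and the second summand is a difference on $I_{F,k+1}$, hence by the inductive hypothesis belongs to the linear span of $\xi_{F,k+1}, \ldots, \xi_{F,n}$. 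The case $b = x$ is symmetric, giving an extra overall sign. This completes the inductive step.

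The argument is entirely elementary; the only thing one must take care of is distinguishing the two cases $\sigma_k = \pm$ to correctly identify which element of $I_{F,k}$ is deleted when passing to $I_{F,k+1}$, and to confirm that the companion endpoint $z$ of $\xi_{F,k}$ always remains in $I_{F,k+1}$. Since this follows directly from Definition~\ref{Dfn=FSets}, there is no substantive obstacle: the lemma is really a bookkeeping statement about the combinatorial structure of the choice sequences $F$.
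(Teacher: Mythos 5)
Your proof is correct and follows essentially the same route as the paper: a downward induction from $k=n$ to $k=1$, using that $I_{F,k+1}$ is obtained from $I_{F,k}$ by deleting one endpoint of $\xi_{F,k}$ while the other endpoint survives, and then telescoping $\xi_{a,b}=\pm\xi_{F,k}+\xi_{z,b}$. The case analysis on $\sigma_k$ and the identification of the deleted element match Definition~\ref{Dfn=FSets} exactly, so there is no gap.
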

\begin{proof}
 First notice that for  any set $J \subseteq \{0,1, \ldots, n\}$ and for any $a,b, c, d \in J$, $a \not = b$, $c \not = d$ we have that $\xi_{a,b}$ lies in the linear span of $\xi_{c, d}$ and $\xi_{r,s}$, $r,s \in J \backslash \{ c\}$. 
Indeed, if $\{ a,b\} = \{ c,d \}$ this statement is obvious. If $a,b \not = c$ then $a, b \in J \backslash \{ c\}$ and the statement is also obvious. Now assume that $a = c$, the case that $b = c$ being similar. The case $b = d$ was covered already and so we assume $b \not = d$.  Then   $\xi_{a,b} = \xi_{c,d} + \xi_{d,b}$ and we are done. 

\vspace{0.3cm}
  
\noindent {\bf Claim:} For $a,b \in I_{F,k}$ with $a\not = b$,  $\xi_{a,b}$  lies in the linear span of the functions 
\[
\xi_{F, l}, \quad l = k, \ldots, n.
\]

\vspace{0.3cm}

We prove this inductively from $k= n$ to $k=1$.  If $k = n$ then $I_{F,n}$ consists of the two points~$F_{n}$ and $F_{n}^-$. As $\xi_{F, n} = \lambda_{F_{n}} - \lambda_{F_{n}^-} = \xi_{F_n, F_n^-}$, the statement follows. 

Now assume the statement is proved for $k+1$; we shall prove it for $k$. Let $J = I_{F,k}$. Let~$\sigma_k$ be the $k$-th $\pm$ sign of $F$. If $\sigma_{k} = +$ then let $c = F_k$ and $d = F_k^-$. If $\sigma_{k} = -$ then let~$c = F_k^-$ and~$d = F_k$. Note that $I_{F,k+1} = J \backslash \{ c\}$.  
By the induction hypothesis $\{\xi_{F, l}\mid l = k+1, \ldots, n\}$ spans same space as $\{ \xi_{r,s} \mid r,s \in I_{F,k+1}, r\not = s\}$. Then the claim follows by the first paragraph. 
\end{proof}
 
  \begin{definition}
Let $F \in \mathcal{F}_{n,k}, 1 \leq k \leq n-1$. For $1 \leq l \leq k$ we use the shorthand notation  
\[
U_{F,l} := U_{I_{F,l}, F_l}, \quad \theta_{F, l} := \theta_{I_{F,l}, F_l}.
\]
\end{definition}

\begin{remark}
    The equation of functions \eqref{Eqn=LambdaZetaToXi} below is {\it a priori} only defined on the domain of~${\lambda \in \mathbb{R}^{n+1}}$ for which $\xi_{F,l}(\lambda) \not = 0$, as otherwise $\zeta_{F,l}$ is not defined. However, by using the partition of unity function $\theta_{F,l}$ we may interpret the value of the left hand side of \eqref{Eqn=LambdaZetaToXi} as 0 for such $\lambda$. Therefore, we  view  \eqref{Eqn=LambdaZetaToXi} as an equality of functions on $\mathbb{R}^{n+1} \backslash \{ 0 \}$. 
\end{remark}

\begin{lemma}\label{Lem=ThetaQIsH}
Let $F \in \mathcal{F}_{n, n-1}$. Let $Q$ be a polynomial in $(n-1)$ variables vanishing on~${\mathbb{R}^{n-2} \times \{ 0 \}}$. 
Then there exists $H \in \mathcal{H}_n$ (see Definition \ref{Dfn=Hn})   such that 
\begin{equation}\label{Eqn=LambdaZetaToXi}
 \left( \prod_{l=1}^{n-1}  \theta_{F, l }(\lambda)  \right)     Q(\zeta_{F,1}, \ldots, \zeta_{F,n-1}) =
H( \xi_{F, 1}, \ldots, \xi_{F, n}). 
\end{equation} 
\end{lemma}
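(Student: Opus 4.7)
My plan is to define $H$ by substituting formal variables $\eta_1, \ldots, \eta_n$ for $\xi_{F,1}, \ldots, \xi_{F,n}$ in the left-hand side of~\eqref{Eqn=LambdaZetaToXi}, then verify the three defining properties of $\mathcal{H}_n$.

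First I would observe, via Lemma~\ref{Lem=SufficientDifferences}, that every difference $\lambda_a - \lambda_b$ entering either $\theta_{F,l}$ (through the consecutive differences in $I_{F,l}$ computed by $Q_{I_{F,l}}$) or $\zeta_{F,l}$ lies in the real linear span of $\xi_{F,l}(\lambda), \ldots, \xi_{F,n}(\lambda)$.  A dimension count shows this span is a basis, so the map sending $(\xi_{F,l}, \ldots, \xi_{F,n})$ to the vector of consecutive differences in $\mathbb{R}^{I_{F,l}}$ is a linear isomorphism. Hence there exist a degree-zero homogeneous smooth function $\hat\theta_l$ and a rational function $\hat\zeta_l$ (both in the variables $\eta_l, \ldots, \eta_n$ only, with denominator of $\hat\zeta_l$ equal to $\eta_l$) such that $\theta_{F,l}(\lambda) = \hat\theta_l(\xi_{F,l}(\lambda), \ldots, \xi_{F,n}(\lambda))$ and $\zeta_{F,l}(\lambda) = \hat\zeta_l(\xi_{F,l}(\lambda), \ldots, \xi_{F,n}(\lambda))$. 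Setting
\[
H(\eta) := \prod_{l=1}^{n-1} \hat\theta_l(\eta_l, \ldots, \eta_n)\, Q(\hat\zeta_1, \ldots, \hat\zeta_{n-1})
\]
(extended by zero wherever the right-hand side is a priori undefined) makes identity~\eqref{Eqn=LambdaZetaToXi} immediate by construction. Homogeneity of $H$ is also clear, since each $\hat\theta_l$ is degree-zero homogeneous by construction of $\widetilde\theta_{k,l,\mathbb{R}_{>0}}$ in Definition~\ref{Dfn=POI}, each $\hat\zeta_l$ is a ratio of linear forms and hence degree-zero homogeneous, and $Q$ is polynomial, yielding a sum of degree-zero homogeneous terms.

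Next I would verify the support property~\eqref{Eqn=HnSuppConiditon}. The key point is that the cutoff $\widetilde\theta_{k,l,\mathbb{R}_{>0}}$ is supported inside the cone where $|\xi_l|$ dominates all the other coordinates up to a factor of $2$; since the origin is excluded, the closure of its support in $\mathbb{R}^k \setminus \{0\}$ avoids the hyperplane $\{\xi_l = 0\}$ entirely. Pulled back through the linear isomorphism described above, this means that for each $1 \le l \le n-1$ the function $\hat\theta_l$ vanishes on an \emph{open} neighbourhood of $\{\eta_l = 0\}$ in $\mathbb{R}^n \setminus \{0\}$, which yields~\eqref{Eqn=HnSuppConiditon}. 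As a byproduct, this same open-neighbourhood vanishing kills the poles that $\hat\zeta_l$ (and hence $Q(\hat\zeta_1, \ldots, \hat\zeta_{n-1})$) has along $\{\eta_l = 0\}$, so the product $H$ extends smoothly across those loci by the value $0$, giving smoothness on all of $\mathbb{R}^n \setminus \{0\}$.

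Finally, for property~\eqref{Eqn=HnSuppConiditonA}, since the polynomial $Q$ vanishes on $\mathbb{R}^{n-2} \times \{0\}$ it factors as $Q(z_1, \ldots, z_{n-1}) = z_{n-1}\,\tilde Q(z_1, \ldots, z_{n-1})$ for some polynomial $\tilde Q$; combined with the identity $\hat\zeta_{n-1}(\eta_{n-1}, \eta_n) = \pm\, \eta_n/\eta_{n-1}$ noted just after~\eqref{Eqn=ZetaFRhoL}, and using that the factor $\hat\theta_{n-1}$ in front absorbs the $1/\eta_{n-1}$ near $\eta_{n-1} = 0$, the whole expression carries an overall factor of $\eta_n$ and hence vanishes when $\eta_n = 0$. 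I expect the main obstacle to be checking smoothness across the loci $\{\eta_l = 0\}$ carefully, which is precisely why one needs the sharper statement that the cutoffs $\hat\theta_l$ vanish on an open neighbourhood of the offending hyperplane rather than merely pointwise.
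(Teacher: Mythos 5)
Your proposal is correct and follows essentially the same route as the paper: you use Lemma~\ref{Lem=SufficientDifferences} to change basis to $\xi_{F,1},\ldots,\xi_{F,n}$ (the paper's matrices $T^{(F,k)}$ and $R^{(F)}$, with the crucial normalisation that the dominating coordinate of the cutoff maps exactly to $\eta_k$, cf.~\eqref{Eqn=OneTopLeft}), and then verify homogeneity, the support condition~\eqref{Eqn=HnSuppConiditon} from the conical support of the cutoffs, smoothness by the open-neighbourhood vanishing, and~\eqref{Eqn=HnSuppConiditonA} from $\zeta_{F,n-1}=\xi_{F,n}/\xi_{F,n-1}$ together with the hypothesis on $Q$. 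The only cosmetic difference is your explicit factorisation $Q=z_{n-1}\tilde Q$, where the paper simply evaluates $\widetilde Q$ at $\xi_n=0$ on the support of the cutoffs.
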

\begin{proof}
        We will first express each of the functions 
        \begin{equation} \label{Eqn=ReExpress}
           Q(\zeta_{F,1}, \ldots, \zeta_{F,n-1}) \quad \textrm{ and } \quad    \theta_{F, k }, \quad  k=1, \ldots, n-1, 
        \end{equation}
        in terms of $\xi_{F, 1}, \ldots, \xi_{F,  n}$. 
For $1 \leq k \leq n$, let  
\[
I_{F, k} = \{ i_0 < \ldots < i_{n+1-k} \}. 
\]
Define $l(k)$ by 
\[
F_k = i_{l(k)}.
\]
By Lemma \ref{Lem=SufficientDifferences}, the linear span of the functions on $\mathbb{R}^{n+1}$ given by   $\xi_{i_{1},i_0}, \ldots, \xi_{i_{n+1-k}, i_{n-k}}$ and by~$\xi_{F, k}, \ldots, \xi_{F,n}$ is the same. Therefore, there exists an invertible matrix
\[
T^{(F,k)} = (T^{(F,k)}_{l,j})_{\substack{1\leq l \leq n+1-k, \\ k \leq j \leq n \qquad }} \in M_{n+1-k}(\mathbb{R}),
\]
such that
\[
\xi_{i_l, i_{l-1}} = \sum_{j = k}^n  T^{(F,k)}_{l, j  } \xi_{F, j}. 
\]
Note that  since $\xi_{i_{l(k)},i_{l(k)-1}}=\xi_{F_k,F_k^-}=\xi_{F,k}$, we have 
\begin{equation}\label{Eqn=OneTopLeft}
T^{(F,k)}_{l(k), k} = 1 \quad \textrm{ and } \quad T^{(F,k)}_{l(k), j} = 0,\; j \not = k.
\end{equation}
Set 
\[
 \widetilde{ \widetilde{\theta}}_{k,l, \mathbb{R}_{>0}}(\xi_1, \ldots, \xi_n)  =    \widetilde{\theta}_{ n+1-k,l, \mathbb{R}_{>0} } \circ T^{(F,k)}(\xi_k, \ldots, \xi_n).
\]
Then by the construction of $ \widetilde{\theta}_{k,l , \mathbb{R}_{>0} }$  in Definition~\ref{Dfn=POI} we have
\begin{equation}\label{Eqn=ThetaXiExpression}
\theta_{F,k}(\lambda)  =  \widetilde{\theta}_{n+1-k,l(k) , \mathbb{R}_{>0}} (\xi_{i_1, i_{0}}, \ldots, \xi_{i_{n+1-k}, i_{n-k}} ) = \widetilde{ \widetilde{\theta}}_{k,l(k) , \mathbb{R}_{>0}} (\xi_{F,1}, \ldots, \xi_{F,n}).
\end{equation}

 Next, recall that  
 \begin{equation}\label{Eqn=ZetaRecall}
 \zeta_{F,  l} = \frac{ \xi_{a,b}  }{\xi_{F,l}},
 \end{equation}
 where $(a,b)$  are described in \eqref{Eqn=ZetaFRhoL} and are in any case elements of $I_{F, l+1}$. Hence, by Lemma \ref{Lem=SufficientDifferences} the numerator of \eqref{Eqn=ZetaRecall} is a linear combination of vectors  $\xi_{F, j}$, $j = l+1, \ldots, n$. Therefore there exists an upper triangular matrix
 \[
R^{(F)} = (R^{(F)}_{l,j})_{1 \leq l \leq n-1, 2 \leq j \leq n} \in M_{n-1}(\mathbb{R})
 \]
 such that for $l = 1, \ldots, n-1$ we have
\[
\zeta_{F, l} = \frac{\sum_{j=l+1}^n R^{(F)}_{l,j} \xi_{F, j}}{\xi_{F, l}}.
\]    
In particular, the fact that $\zeta_{F,  n-1} = \xi_{F, n} \slash \xi_{F, n-1}$ yields  $R^{(F)}_{n-1, n} = 1$.    

Therefore, 
\begin{multline}\label{Eqn=QXiExpression}
Q(\zeta_{F, 1} , \ldots, \zeta_{F, n-1})
=
Q( \frac{\sum_{j=2}^n R^{(F)}_{1,j} \xi_{F, j}}{\xi_{F, 1}} ,  \frac{\sum_{j=3}^n R^{(F)}_{2,j} \xi_{F, j}}{\xi_{F, 2}} , \ldots,  \frac{\sum_{j=n-1}^n R^{(F)}_{n-2,j} \xi_{F, j}}{\xi_{F, n-2}}, \frac{ \xi_{F, n}  }{  \xi_{F, n-1} } ) \\
=: \widetilde{Q}(\xi_{F,1}, \ldots, \xi_{F,n}).
\end{multline}
We have thus expressed each of the functions \eqref{Eqn=ReExpress} in terms of $\xi_{F,1}, \ldots, \xi_{F,n}$. 

Next we prove the statement of the lemma. 
It remains to show that the function $\widetilde{Q} \Theta$ with 
\begin{equation}\label{eqn: big_theta}
  \Theta =  \prod_{k = 1}^{n-1} \widetilde{\widetilde{\theta}}_{k, l(k), \mathbb{R}_{>0}}
\end{equation}
is contained in $\mathcal{H}_n$. 
By construction the support of $\widetilde{\theta}_{ n+1-k,l(k), \mathbb{R}_{>0}}$ with $k = 1, \ldots, n-1$ does not intersect the hyperplane
\[
\{  \xi \in \mathbb{R}^{ n+1-k} \backslash \{ 0 \} \mid  \xi_{l(k)} = 0 \}.
\]
 Hence by  
 \eqref{Eqn=OneTopLeft},  the support of $\widetilde{\widetilde{\theta}}_{k,l(k) , \mathbb{R}_{>0}}$ does not intersect the set
\[
L_ k := \{  \xi \in \mathbb{R}^{n}  \backslash \{ 0 \}   \mid  \xi_{k} = 0 \}.
\]
It follows that the support of the homogeneous function $\Theta$ defined in~\eqref{eqn: big_theta}
intersects none of the sets  $L_k$, $k=1, \ldots, n-1$. 

Next note that $\widetilde{\widetilde{\theta}}_{k,l(k) , \mathbb{R}_{>0}}$ is smooth in every point of  $\mathbb{R}^n$ that is not contained in  $\mathbb{R}^{k-1} \times \{ 0 \}$, where $0 \in \mathbb{R}^{n-k+1}$. Then certainly it is smooth on the support of $\Theta$.

As the support of the homogeneous function $\Theta$ does not intersect the lines $L_k$ for~${k =1, \ldots, n-1}$, it follows that the function $\widetilde{Q}$ defined through \eqref{Eqn=QXiExpression} is bounded and smooth on the support of~$\Theta$. Moreover, $\widetilde{Q}(\xi_1, \ldots, \xi_{n-1}, 0) = 0$ by the assumption that $Q(s_1, \ldots, s_{n-1}, 0 ) = 0$ for all~${s_1, \ldots, s_{n-1} \in \mathbb{R}}$. 

 Altogether we conclude that $\widetilde{Q} \Theta$  is in $\mathcal{H}_n$. 
\end{proof}

\subsection{Reduction formulae}\label{Sect=Reduction} 

Consider the set of variables whose coordinates are all different:
\[
D_{n}  := \left\{ (\lambda_0, \ldots, \lambda_n) \in \mathbb{R}^{n+1} \mid \forall \:  0 \leq i \not = j \leq n: \lambda_i \not = \lambda_j   \right\} \subseteq \mathbb{R}^{n+1}.
\]
By restricting our domain of definition to this set we can assure that the functions $\zeta_{F, l}$ are well-defined for any $F \in \mathcal{F}_{n,k}$, $1 \leq l \leq k$.
\begin{proposition}\label{Prop=KeyDecompositionBrandNew}
For $1 \leq k \leq n-1$ there exist $k$-variable polynomials $Q_{F,k,\alpha}$,  
indexed by~${F \in \mathcal{F}_{n,k}}$ and $\alpha \in \NNOne^{I_{F, k+1}}$, such that 
\begin{equation}\label{Eqn=FactorisatonProperty}
Q_{F, k, \alpha}(\zeta_1, \ldots, \zeta_{l-1}, 0, \zeta_{l+1}, \ldots, \zeta_k) = 0, \quad 1 \leq l \leq k, \:  \zeta_i \in \mathbb{R}, 
\end{equation}
and such that {on $D_n$ we have}
\begin{equation}\label{Eqn=CoreExpansion}
f^{[n]}=  \sum_{ F \in \mathcal{F}_{n,k} }   \sum_{\substack{ \alpha \in \NNOne^{I_{F, k+1}}, \\ \vert \alpha \vert = n +1  }   } \left( \prod_{l=1}^k  \theta_{F, l }(\lambda)  \right)    Q_{F, k , \alpha}(\zeta_{F,1}, \ldots, \zeta_{F,k})   f[\lambda_{I_{F,k+1}}^{(\alpha)}].
\end{equation}
\end{proposition}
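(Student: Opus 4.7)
The plan is to prove \eqref{Eqn=CoreExpansion} by induction on $k$, where at each step we insert a partition of unity from Section~\ref{subsec: partition_of_unity} into each term of the previous decomposition and then apply the algebraic reduction formula of Corollary~\ref{Cor=AlgebraicDecomposition} to the divided difference appearing in that term. The combinatorial key is to match the sign $\sigma_{k+1} \in \{+,-\}$ of the choice sequence with the two sums of Corollary~\ref{Cor=AlgebraicDecomposition}, and to exploit the fact that the polynomials $p_{\alpha,l}(\xi) = \binom{\alpha+l-1}{l}\xi^\alpha(1-\xi)^l$ produced by that corollary vanish at $\xi = 0$ whenever $\alpha \ge 1$.

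\textbf{Base case ($k=1$).} Write $f^{[n]} = \sum_{i_1 \in \{1,\dotsc,n\}} \theta_{I_{F,1}, i_1}\, f^{[n]}$ on $D_n$, using Definition~\ref{Dfn=POI} and the fact that the $\theta_{I_{F,1}, i_1}$ form a partition of unity. On each piece, apply Corollary~\ref{Cor=AlgebraicDecomposition} to $f^{[n]}$ with the distinguished indices chosen as (in the notation of Corollary~\ref{Cor=AlgebraicDecomposition}, writing $c$ for the index named $k$ there to avoid collision with the induction variable) $i = F_1 = i_1$, $j = F_1^-$, and $c = F_1^+$. The first sum in that corollary removes $\lambda_{F_1^-}$ and corresponds to $\sigma_1 = -$; the second removes $\lambda_{F_1}$ and corresponds to $\sigma_1 = +$. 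With this choice of $c$, a direct comparison with \eqref{Eqn=ZetaFRhoL} shows that the arguments $\tfrac{\lambda_i - \lambda_c}{\lambda_i - \lambda_j}$ and $\tfrac{\lambda_c - \lambda_j}{\lambda_i - \lambda_j}$ of $p_{\alpha_j, l}$ and $p_{\alpha_i, l}$ coincide on the nose with $\zeta_{F,1}$ for the appropriate sign. The resulting $Q_{F,1,\alpha}$ is one of these polynomials, and since $\alpha_i, \alpha_j \ge 1$ we obtain $Q_{F,1,\alpha}(0) = 0$, giving \eqref{Eqn=FactorisatonProperty}.

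\textbf{Inductive step.} Suppose \eqref{Eqn=CoreExpansion} holds at level $k$ for some $1 \le k \le n-2$. For each $F \in \mathcal{F}_{n,k}$ and each divided difference $f[\lambda_{I_{F,k+1}}^{(\alpha)}]$ appearing in the decomposition (which has $n+1-k \ge 3$ distinct inputs, so Corollary~\ref{Cor=AlgebraicDecomposition} applies), insert the partition of unity $1 = \sum_{i_{k+1}} \theta_{I_{F,k+1}, i_{k+1}}$ and apply Corollary~\ref{Cor=AlgebraicDecomposition} to that divided difference with $i = i_{k+1}$, $j$ the predecessor of $i_{k+1}$ in $I_{F,k+1}$, and $c$ the cyclic successor. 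Writing $F' = (F, i_{k+1}, \sigma_{k+1}) \in \mathcal{F}_{n,k+1}$, the two sums match $\sigma_{k+1} = -$ and $\sigma_{k+1} = +$ respectively, and each contributes an extra polynomial factor of the form $p_{\beta, l}(\zeta_{F', k+1})$ with $\beta \ge 1$. Define
\[
Q_{F', k+1, \alpha'}(\zeta_1, \dotsc, \zeta_{k+1}) := Q_{F, k, \alpha}(\zeta_1, \dotsc, \zeta_k)\, p_{\beta, l}(\zeta_{k+1}).
\]
Since the first $k$ entries of $F'$ coincide with $F$, we have $\zeta_{F', l} = \zeta_{F, l}$ for $l \le k$, and the inductive hypothesis yields \eqref{Eqn=FactorisatonProperty} for such $l$. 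The new factor $p_{\beta, l}(\zeta_{k+1})$ secures \eqref{Eqn=FactorisatonProperty} at $l = k+1$. Moreover the multi-index $\alpha'$ lies in $\NNOne^{I_{F', k+2}}$ because Corollary~\ref{Cor=AlgebraicDecomposition} produces only entries of the form $\alpha_i - l \ge 1$ or $\alpha_c + \alpha_j + l \ge 1$ (resp.\ $\alpha_j - l \ge 1$ or $\alpha_c + \alpha_i + l \ge 1$) when applied to inputs with positive $\alpha_i, \alpha_j$.

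\textbf{Main obstacle.} The principal difficulty is a notational and combinatorial matching: ensuring that the two summands of Corollary~\ref{Cor=AlgebraicDecomposition} correspond precisely with the sign choices $\sigma_{k+1} \in \{+, -\}$ of Definition~\ref{Dfn=FSets}, and that the algebraic ratios appearing as arguments of $p_{\alpha_i, l}$ and $p_{\alpha_j, l}$ agree exactly with $\zeta_{F', k+1}$ rather than being reciprocals or rescalings. This is achieved by the specific choice $c = F_{k+1}^+$ for the third index in Corollary~\ref{Cor=AlgebraicDecomposition}; any other choice would give ratios that cannot be expressed as polynomials in $\zeta_{F', k+1}$ alone. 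The cyclic case (when $F_{k+1}$ is the largest element of $I_{F, k+1}$, so $F_{k+1}^+$ is the smallest) requires no special treatment, since Corollary~\ref{Cor=AlgebraicDecomposition} only needs $\lambda_i \ne \lambda_j$, which holds throughout $D_n$.
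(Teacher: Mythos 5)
Your proposal is correct and follows essentially the same route as the paper: induction on $k$, inserting the partition of unity $\sum_i \theta_{I_{F,k+1},i}$, applying Corollary~\ref{Cor=AlgebraicDecomposition} with the third index chosen as the cyclic successor $F_{k+1}^+$ so that the arguments of $p_{\alpha,l}$ match $\zeta_{F',k+1}$, and using $p_{\alpha,l}(0)=0$ for $\alpha\ge 1$ to get \eqref{Eqn=FactorisatonProperty}. The only cosmetic difference is that the paper collects all pairs $(\alpha,l)$ producing the same output multi-index $\beta$ into a single polynomial $Q_{F',k+1,\beta}$ (a sum of terms of your product form), which does not affect the vanishing property.
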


\begin{proof}
The proof is essentially a repeated application of the reduction formulae from Section \ref{Sect=ReductionFormula} and proceeds by induction on $k$. All functions that occur in this proof shall be considered as functions on $D_n$. 

The induction starts at $k= 0$, in which case the statement of the proposition trivially reduces to $f^{[n]}(\lambda) = f[\lambda_{0}, \ldots, \lambda_n]$, which holds by definition.

We shall now prove the proposition for $k+1$, assuming that it holds for $0 \leq k \leq n-2$. Consider the expansion \eqref{Eqn=CoreExpansion}. Fix for now an index $F \in \mathcal{F}_{n,k}$ and write $F = (i_1, \sigma_1, \ldots, i_k, \sigma_k)$ 
as in Definition \ref{Dfn=FSets}. Moreover, fix the multi-index $\alpha \in \NNOne^{I_{F, k+1}}$ with $\vert \alpha \vert = n+1$. 
Now for  $i \in I_{F, k+1}$ and $\sigma = \pm$ we introduce the following notation.  We set
\[ 
F(i, \sigma) := (i_1, \sigma_1, \ldots, i_k, \sigma_k, i,\sigma ) \in \mathcal{F}_{n,k +1}.
\]
Then we set 
\[
i := F(i, \sigma)_{k+1}, \quad i^- := F(i, \sigma)^-_{k+1}, \quad i^+ := F(i, \sigma)^+_{k+1}. 
\]
We also define 
\[
\widetilde{I}_{F,k+1} := I_{F,k+1} \backslash \{i^-, i , i^+ \}, \quad  \widetilde{\alpha} := (\alpha_i, i \in \widetilde{I}_{F,k+1}).
\]
Then note that  
\[
\zeta_{F(i,\sigma), k+1}(\lambda) = 
\left\{
\begin{array}{ll}
\frac{\lambda_{i^-} - \lambda_{i^+}  }{ \lambda_{i^-} -  \lambda_{i} }, & \textrm{ if } \sigma= +, \\
\frac{\lambda_{i} - \lambda_{i^+}  }{ \lambda_{i} -  \lambda_{i^- } },  & \textrm{ if } \sigma = -. \\
\end{array}
\right.
\] 
Let $j$ be the smallest element of $I_{F, k+1}$.  Now Corollary~\ref{Cor=AlgebraicDecomposition} gives
\begin{equation}\label{Eqn=VeryComplicatedInductionStep}
\begin{split} 
  f[ \lambda_{I_{F, k+1}}^{ (\alpha )} ] 
 = & \sum_{i \in I_{F,k+1} \backslash \{ j \}}  \theta_{I_{F,k+1}, i}(\lambda)  f[ \lambda_{I_{F, k+1}}^{ (\alpha )} ]   \\
 = &\sum_{i \in I_{F,k+1} \backslash \{ j \} }  \theta_{I_{F,k+1}, i}(\lambda) \left( \sum_{l=0}^{\alpha_{i^-} - 1} p_{\alpha_{i}, l}\left( \frac{\lambda_{i^-} - \lambda_{i^+} }{ \lambda_{i^-} - \lambda_{i}  } \right)  f[\lambda_{i^-}^{(\alpha_{i^-} -l)},   \lambda_{i^+ }^{(\alpha_{ i } +  \alpha_{ i_+} +l)},   \lambda_{ \widetilde{I}_{F,k+1}   }^{( \widetilde{\alpha}     ) } ]  \right. \\
 & \qquad\qquad\qquad\qquad\qquad\left. +  \sum_{l=0}^{\alpha_{i} - 1} p_{\alpha_{i^- }, l}\left(  \frac{\lambda_{i} - \lambda_{i^+} }{ \lambda_{ i} - \lambda_{i^- }  }     \right)  f[ \lambda_{i}^{(\alpha_{i} - l)}, \lambda_{ i^+ }^{(\alpha_{i^- } + \alpha_{i^+ } + l )},   \lambda_{ \widetilde{I}_{F, k+1}  }^{(  \widetilde{\alpha} )}     ]  \right) \\
 = & \sum_{ i \in I_{F, k+1} \backslash \{ j \}  }  \sum_{\sigma \in \{ +,-\} } \theta_{I_{F,k+1}, i}(\lambda) \sum_{l=0}^{ N(\alpha, F, i, \sigma )   } 
 p_{ \alpha, F, i, \sigma, l}( \zeta_{F(i,\sigma), k+1}(\lambda) ) f[ \lambda_{ I_{ F(i,\sigma), k+2 }}^{ {(\beta(\alpha,  F, i,\sigma, l  )}  ) }  ],
\end{split}
\end{equation}
where (using the notation of~\eqref{eqn: decomp_polynomials}) we have
\[
 p_{ \alpha, F, i, \sigma, l}   = \left\{
\begin{array}{ll}
 p_{\alpha_i , l}      &    \textrm{ if }  \sigma = +,  \\
 p_{\alpha_{i^-} , l}      &    \textrm{ if }  \sigma = -,
\end{array}
  \right.  \qquad 
N(\alpha, F, i, \sigma) = \left\{
\begin{array}{ll}
 \alpha_{i^-} -1    &    \textrm{ if }  \sigma = +,  \\
\alpha_{ i } -1     &    \textrm{ if }  \sigma = -,
\end{array}
\right.
\]
and 
\[
 \beta(\alpha,  F, i,\sigma, l  )= ( \beta_j (\alpha,F,i,\sigma, l); \;j \in I_{F(i, \sigma), k+2}) 
\]
is the multi-index of order $n+1$ given by 
\[
 \beta_j (\alpha,F,i,\sigma, l)  = \left\{
\begin{array}{ll}
\alpha_{j},   & \textrm{ if }  j \in \widetilde{I}_{F, k+1},  \\
\alpha_{i^-}     - l,   &    \textrm{ if }  \sigma  = +, j = i^-,   \\
\alpha_{i}  + \alpha_{i^+}  + l,  &   \textrm{ if }  \sigma  = +, j = i^+,   \\
\alpha_{i}   - l,   &    \textrm{ if }  \sigma = -, j  = i,  \\
\alpha_{i^-}  + \alpha_{i^+ }  + l,  &   \textrm{ if }  \sigma  = -, j = i^+. 
\end{array}
  \right.  
\]
Note that $p_{ \alpha, F, i, \sigma, l}$ and $N(\alpha, F, i, \sigma)$ indeed depend on $F$ as $i^-$ depends on both $i$ and  $F$. Moreover, note that for both choices $\sigma = \pm$  we have  $ p_{ \alpha, F, i, \sigma, l} (0) = 0$, as $\alpha_{i}$ and $\alpha_{i^-}$ are nonzero.

Now we sum the expression \eqref{Eqn=VeryComplicatedInductionStep} over all $F \in \mathcal{F}_{n,k}$. The sum over $F$, $i$, and $\sigma$ may be represented by a single summation over $F' = F(i,\sigma)$, i.e.\ summing over all $F'\in \mathcal{F}_{n,k+1}$. Then for 
\[
F':=  F(i, \sigma)  \in \mathcal{F}_{n, k+1} \quad \textrm{ and} \quad \alpha \in \NNOne^{I_{F,k+1}}\textrm{ with }  \vert \alpha \vert = n+1
\]
we set 
\[
N(F', \alpha) := N(\alpha, F, i,\sigma),
\]
and for $1 \leq l \leq N(F', \alpha)$ we set  
\[
\beta(F',\alpha, l) := \beta(\alpha,F, i, \sigma,l). 
\]
Also set $p_{\alpha, F', l} := p_{\alpha, F, i, \sigma, l}$.  
Recall the short hand notation $\theta_{F,k+1} := \theta_{I_{F,k+1}, F_{k+1}}$.
We find {by induction and collecting terms}
\[
\begin{split}
f^{[n]} =  & \sum_{ F' \in \mathcal{F}_{n,k+1} }  \theta_{F', k+1}(\lambda)  
\left( \prod_{l=1}^k  \theta_{F', l }(\lambda)   \right) \\
& \quad \times \quad 
\sum_{\substack{ \alpha \in \NNOne^{I_{F', k+1}}, \\ \vert \alpha \vert = n +1  }   }   \sum_{l=0}^{ N(F', \alpha)   }   Q_{F', k , \alpha}(\zeta_{F',1}, \ldots, \zeta_{F',k})   
 p_{ \alpha, F',   l}( \zeta_{F', k+1}(\lambda) )
 f[ \lambda_{ I_{  F', k+2 }}^{ (\beta( F',\alpha, l  )  ) }  ] \\
 = & \sum_{ F' \in \mathcal{F}_{n,k+1} }    \sum_{\substack{ { \beta } \in \NNOne^{I_{F', k+2}}, \\ \vert { \beta }\vert = n +1  }   } 
\left( \prod_{l=1}^{k+1}  \theta_{F', l }(\lambda)   \right) \\
& \quad \times \quad   \left( 
\sum_{\substack{ \alpha \in \NNOne^{I_{F', k+1}},   \vert \alpha \vert = n +1,\\  l = 0, \ldots,   N(F', \alpha), \\
\textrm{ s.t. }  \beta(F',\alpha, l) =  \beta }   }     Q_{F', k , \alpha}(\zeta_{F',1}, \ldots, \zeta_{F',k})   
 p_{ \alpha, F',  l}( \zeta_{F', k+1}(\lambda) ) \right) 
 f[ \lambda_{ I_{ F', k+2} }^{ (   \beta    ) }  ]. 
\end{split}
\]
The latter expression has the desired form. 
\end{proof}

\begin{proposition}\label{Prop=KeyDecompositionStrong_theta} For every $F \in \mathcal{F}_{n,n-1}$ and  $\alpha = (\alpha_-, \alpha_+) \in \NNOne^2$,  $\vert \alpha \vert = n+1$  there exists a function $H_{F, \alpha} \in \mathcal{H}_n$ such that 
such that for every $f \in C^n(\mathbb{R})$ we have,  
\begin{equation}\label{Eqn=HomogeneousFunctMainDec}
  f^{[n]} = \sum_{ \substack{F \in \mathcal{F}_{n, n-1},\\ \alpha = (\alpha_-, \alpha_+) \in \NNOne^2,  
  \vert \alpha \vert = n+1.  } } 
   H_{F, \alpha}( \xi_{F, 1}, \ldots, \xi_{F, n})  
    f[  \lambda_{F_{n}^-}^{( \alpha_-)}, \lambda_{F_{n}}^{(\alpha_+)}  ]. 
\end{equation}
\end{proposition}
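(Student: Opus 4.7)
The plan is to deduce the proposition by specialising Proposition~\ref{Prop=KeyDecompositionBrandNew} to $k = n-1$ and then applying Lemma~\ref{Lem=ThetaQIsH} to each summand. The two-step structure in the earlier sections has been designed precisely so that the present statement becomes a direct repackaging; the real work has been done in those results.

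Concretely, Proposition~\ref{Prop=KeyDecompositionBrandNew} at $k = n-1$ yields on $D_n$ the identity
$$ f^{[n]} = \sum_{F \in \mathcal{F}_{n,n-1}} \sum_{\substack{\alpha \in \NNOne^{I_{F,n}} \\ |\alpha| = n+1}} \left(\prod_{l=1}^{n-1} \theta_{F,l}(\lambda)\right) Q_{F,n-1,\alpha}(\zeta_{F,1},\ldots,\zeta_{F,n-1})\, f[\lambda_{I_{F,n}}^{(\alpha)}]. $$
By Definition~\ref{Dfn=FSets}, the set $I_{F,n}$ consists of exactly the two indices $F_n^-$ and $F_n$, so any multi-index $\alpha \in \NNOne^{I_{F,n}}$ is a pair $(\alpha_-,\alpha_+) \in \NNOne^2$ with $|\alpha| = n+1$, and by the notation of Definition~\ref{def: truncated_int_polyn_momenta} we have $f[\lambda_{I_{F,n}}^{(\alpha)}] = f[\lambda_{F_n^-}^{(\alpha_-)}, \lambda_{F_n}^{(\alpha_+)}]$.

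Next, for each fixed $F$ and $\alpha$, the factorisation property~\eqref{Eqn=FactorisatonProperty} of Proposition~\ref{Prop=KeyDecompositionBrandNew} forces in particular $Q_{F,n-1,\alpha}(\zeta_1,\ldots,\zeta_{n-2},0) = 0$ for all $\zeta_1,\ldots,\zeta_{n-2} \in \mathbb{R}$, so $Q_{F,n-1,\alpha}$ vanishes on $\mathbb{R}^{n-2} \times \{0\}$. This is exactly the hypothesis needed to apply Lemma~\ref{Lem=ThetaQIsH} with $Q = Q_{F,n-1,\alpha}$, which produces a function $H_{F,\alpha} \in \mathcal{H}_n$ with
$$ \left(\prod_{l=1}^{n-1} \theta_{F,l}(\lambda)\right) Q_{F,n-1,\alpha}(\zeta_{F,1},\ldots,\zeta_{F,n-1}) = H_{F,\alpha}(\xi_{F,1},\ldots,\xi_{F,n}), $$
the left-hand side being interpreted as zero whenever some $\xi_{F,l}$ vanishes, as explained in the remark preceding Lemma~\ref{Lem=ThetaQIsH}. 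Substituting back gives~\eqref{Eqn=HomogeneousFunctMainDec} on $D_n$, and this suffices for the intended application to Schur multipliers since $\mathbb{R}^{n+1} \setminus D_n$ has Lebesgue measure zero, by Remark~\ref{Rmk=ZeroSet}.

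The only place that requires genuine verification is that the vanishing condition provided by~\eqref{Eqn=FactorisatonProperty} is precisely the vanishing condition demanded by Lemma~\ref{Lem=ThetaQIsH} on the \emph{last} coordinate of the polynomial; but \eqref{Eqn=FactorisatonProperty} states vanishing on any coordinate, so in particular this holds. I therefore expect no real obstacle in the current proposition itself, all of the difficulty having been absorbed into Proposition~\ref{Prop=KeyDecompositionBrandNew} (the iterative reduction) and Lemma~\ref{Lem=ThetaQIsH} (the re-expression of $\theta_{F,l}$ and $\zeta_{F,l}$ in terms of the $\xi_{F,l}$ and verification that the resulting function lies in $\mathcal{H}_n$).
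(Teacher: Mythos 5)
Your proposal is correct and follows exactly the paper's argument: specialise Proposition~\ref{Prop=KeyDecompositionBrandNew} to $k=n-1$, identify $I_{F,n}=\{F_n^-,F_n\}$ so that $\alpha$ becomes a pair, and apply Lemma~\ref{Lem=ThetaQIsH} using the $l=n-1$ case of the factorisation property \eqref{Eqn=FactorisatonProperty}. Your explicit verification of the vanishing hypothesis and the remark about $D_n$ having full measure are welcome additions but do not change the route.
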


\begin{proof} 
We apply Proposition~\ref{Prop=KeyDecompositionBrandNew} for $k = n-1$. This yields that on $D_n$ we have 
\begin{equation}\label{Eqn=FullStepNew}
\begin{split}
f^{[n]}(\lambda) =  \sum_{ \substack{F \in \mathcal{F}_{n, n-1},\\ \alpha \in \NNOne^{I_{F, n}}, \vert \alpha \vert = n+1} }   \left( \prod_{l=1}^{n-1}  \theta_{F, l }(\lambda)  \right)     Q_{F, n-1 , \alpha}(\zeta_{F,1}, \ldots, \zeta_{F,n-1})   f[\lambda_{I_{F, n}}^{(\alpha)}].
 \end{split}
 \end{equation}
Now note that $I_{F, n}$ contains only two elements and by definition is given by $\{ F_n^-, F_n\}$. Hence, we can represent the summands as $\alpha = (\alpha_-, \alpha_+)$ with $\alpha_- + \alpha_+ = n+1$. 
The statement now follows from Lemma \ref{Lem=ThetaQIsH}.   
     \end{proof}
 
\begin{remark}\label{Rmk=Matching}
We recall that $\xi_{F, n}(\lambda) = \lambda_{F_n} - \lambda_{F_n^-}$. Hence the last variable of the function~$H_{F, \alpha}$ occurring in \eqref{Eqn=HomogeneousFunctMainDec} depends on the same coordinates as the divided difference with repeated variables~$f[  \lambda_{F_{n}^-}^{( \alpha_-)}, \lambda_{F_{n}}^{(\alpha_+)}  ]$. This is one of the conceptual parts of our proof that leads to optimality of the constants.     
\end{remark}

\section{Main result: Upper bound and proof of Theorem A}\label{sect: upper_bound_main_result}

In this section we prove   upper bounds on the norm of Schur multipliers of divided differences,  using the decompositions from Sections \ref{Sect=Homogeneous} and \ref{Sect=DivDiffDecomposition}. In the linear case this was first proved in~\cite{CMPS},  after which alternative proofs were found in~\cite{CPSZ,CJSZ,CGPT}. 

\vspace{0.3cm}

 \noindent {\bf Convention.} Throughout this section fix $n \in \mathbb{N}, n \geq 2$.

\begin{definition}
For $0 \leq i <  j \leq n$ and $1 < p_1, \ldots, p_n < \infty$ we define
\[
p_{(i ; j)} = \left( \sum_{s=i+1}^{j} \frac{1}{p_s}  \right)^{-1}.
\]
Note that if $1 < p := ( \sum_{s=1}^n p_s^{-1} )^{-1} < \infty$ then also $1 < p_{(i ; j)} <\infty$ for any choice of $i$ and $j$. 
 Recall furthermore the notation 
 \[
 p^\sharp = \max(p, p^\ast) = \max(p, \frac{p}{p-1}).
 \]
\end{definition} 

The next lemma is a standard consequence of the elementary reduction equality for Schur multipliers that can be found in \cite[Lemma 3.2.(iv)]{PSS-Inventiones}. We need to apply this reduction inductively and to smoothen the proof we first streamline our notation.    Let $F \in \mathcal{F}_{n, n-1}$.   Recall that for $1 \leq k \leq n$ we have defined the function  
\begin{equation}\label{Eqn=XiLNew}
\xi_{F, k}(\lambda) =   \lambda_{F_k} -  \lambda_{ F_k^-},
\end{equation}
using the notation of Definition~\ref{Dfn=FSets}. Note that
\[
I_{F, n} :=  \{F_{n}^-, F_{n}  \}
\]
consists of two elements. Let  $\alpha =  (\alpha_-, \alpha_+) \in \NNOne^{2}$ with $\alpha_- + \alpha_+  = n+1$ and consider as before
\begin{equation}\label{Eqn=PhiNew}
f^{[n]}( \lambda_{ I_{F, n} }^{( \alpha )}  ) = f[ \lambda_{F_{n}^-}^{( \alpha_-)}, \lambda_{F_{n}}^{(\alpha_+)} ].
\end{equation}
The following convention leads to a more elegant statement of Lemma \ref{Lem=SchrodingerEstimate} below as well as its proof. This convention shall be used only in  Lemma \ref{Lem=SchrodingerEstimate}.

\begin{remark}\label{Rmk=Notation}
Let $F \in \mathcal{F}_{n,n-1}$ and let $1 \leq k \leq n$. In the statement and proof below we shall encounter the function for $t_k, \ldots, t_n \in \mathbb{R} \backslash \{ 0 \}$, 
\[
 \vert \xi_{F,k} \vert^{it_k} \ldots \vert \xi_{F,  n} \vert^{it_n}  f[ \lambda_{F_{n}^-}^{( \alpha_-)}, \lambda_{F_{n}}^{(\alpha_+)} ].
\]
In the definitions so far this is a function on $\mathbb{R}^{n+1}$ that is constant in the variables indexed by the coordinates that are not in $I_{F, k}$. In Lemma \ref{Lem=SchrodingerEstimate} we shall consider this as a function on~${\mathbb{R}^{I_{F, k}} \simeq \mathbb{R}^{n+2-k}}$ and this way it is the symbol of a $n+1-k$-linear Schur multiplier. Note that there are several lines in the proof where we apply this principle to both
\[
\vert \xi_{F,  k} \vert^{it_k} \ldots \vert \xi_{F, n} \vert^{it_n} f[ \lambda_{F_{n}^-}^{( \alpha_-)}, \lambda_{F_{n}}^{(\alpha_+)} ] \quad \textrm{and} \quad 
\vert \xi_{F,  k+1} \vert^{it_{k+1}} \ldots \vert \xi_{F,  n} \vert^{it_n} f[ \lambda_{F_{n}^-}^{( \alpha_-)}, \lambda_{F_{n}}^{(\alpha_+)} ],
\]
which are then symbols on respectively $\mathbb{R}^{I_{F, k}}$  and $\mathbb{R}^{I_{F, k-1}}$, yielding Schur multipliers that are respectively $n+1-k$-linear and $n-k$-linear. 
\end{remark}

\begin{lemma}\label{Lem=SchrodingerEstimate}
Let $f \in C^n(\mathbb{R})$ be such that $\Vert f^{(n)} \Vert_\infty < \infty$.
Let $F \in \mathcal{F}_{n, n-1}$ and let $\alpha \in \mathbb{N}_{\geq 1}^{2}$. Take $1 \leq k \leq n$ and let  
\[
I_{F, k}   = \{ j_0 < \ldots < j_{n+1-k} \}.
\]
We have for any  $1 < p_1, \ldots, p_{n} < \infty$ such that  $1 <  p := (\sum_{s=1}^n p_s^{-1}  )^{-1}  < \infty$, any tuple ${\epsilon = (\epsilon_k, \ldots, \epsilon_n) \in \{0,1\}^{n+1-k}}$, and any  $t_k, \ldots, t_n \in \mathbb{R} \backslash \{ 0 \}$ that 
\begin{equation}\label{Eqn=NestedEstimate}
\begin{split}
& \Vert \left( \prod_{i=k}^n {\rm sign}(\xi_{F,i})^{\epsilon_i} \right) \vert \xi_{F, k} \vert^{it_k} \ldots \vert \xi_{F, n} \vert^{it_n} f[ \lambda_{F_{n}^-}^{( \alpha_-)}, \lambda_{F_{n}}^{(\alpha_+)} ] ] \Vert_{ \frakm_{ p_{(j_{0}; j_{1} )},   \ldots,  p_{(j_{n-k}; j_{n+1-k} ) } }}    \\
 & \qquad 
  \lesssim_n  \qquad   \Vert f^{(n)} \Vert_\infty  \prod_{l=k}^n |t_l | \left( \prod_{l=k}^{n} p^\sharp_{ (F^-_l  ; F_l ) } \right).
\end{split}
\end{equation}  
\end{lemma}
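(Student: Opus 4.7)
My plan is a downward induction on $k$, starting from the base case $k = n$. Here $I_{F, n} = \{F_n^-, F_n\}$ has only two elements, so the displayed symbol defines a \emph{linear} Schur multiplier on $S_{p_{(F_n^-; F_n)}}$. Theorem~\ref{Thm=HMSConditions} directly bounds its HMS seminorm by $|t_n|\,\|f^{(n)}\|_\infty$, and an application of Theorem~\ref{Thm=HMS} together with the elementary estimate $p p^\ast \lesssim p^\sharp$ closes the base case.

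For the inductive step, assuming the bound at level $k+1$, I would factor the level-$k$ symbol as $\phi_k = \psi_k \cdot \phi_{k+1}$, where $\psi_k = \mathrm{sign}(\xi_{F,k})^{\epsilon_k}\,|\xi_{F,k}|^{it_k}$ depends only on the pair $(\lambda_{F_k^-}, \lambda_{F_k})$, and $\phi_{k+1}$ is the level-$(k+1)$ symbol, which depends only on variables indexed by $I_{F,k+1}$. Since $I_{F,k+1}$ is obtained from $I_{F,k}$ by removing either $F_k$ (when $\sigma_k = +$) or $F_k^-$ (when $\sigma_k = -$), exactly one of $\lambda_{F_k^-}, \lambda_{F_k}$ is absent from $\phi_{k+1}$. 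This is precisely the setting of the Schur multiplier reduction \cite[Lemma~3.2(iii)]{PSS-Inventiones}: writing $I_{F,k} = \{j_0 < \cdots < j_{n+1-k}\}$ with $F_k = j_{i^\ast}$, integration over the absent variable in the kernel of $T_{\phi_k}(A_1, \ldots, A_{n+1-k})$ merges two consecutive kernels into an operator product involving $T_{\psi_k}(A_{i^\ast})$, yielding an identity of the form
\[
T_{\phi_k}(A_1, \ldots, A_{n+1-k}) \;=\; T_{\phi_{k+1}}(\ldots, B_{i^\ast}, \ldots),
\]
where $B_{i^\ast}$ is either $T_{\psi_k}(A_{i^\ast})\,A_{i^\ast+1}$ or $A_{i^\ast-1}\,T_{\psi_k}(A_{i^\ast})$, placed in the slot of $T_{\phi_{k+1}}$ corresponding to the merged consecutive pair of $I_{F,k+1}$.

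Next I would bound $\|T_{\psi_k} : S_{p_{(F_k^-; F_k)}} \to S_{p_{(F_k^-; F_k)}}\| \lesssim p^\sharp_{(F_k^-; F_k)}\,|t_k|$ via Theorem~\ref{Thm=HMS} and Theorem~\ref{Thm=HMSConditions}, use H\"older's inequality to estimate $B_{i^\ast}$ in the Schatten class associated with the merged consecutive pair, and apply the inductive hypothesis to $T_{\phi_{k+1}}$. Multiplying these three estimates and observing that the level-$(k+1)$ bound picks up exactly one additional factor $p^\sharp_{(F_k^-; F_k)}\,|t_k|$ completes the induction.

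The main obstacle I anticipate is the combinatorial bookkeeping: verifying that the Schatten exponent arising from H\"older's inequality (for instance $p_{(F_k^-; F_k^+)}$ when $\sigma_k = +$) coincides with the exponent indexed by the new merged consecutive pair of $I_{F,k+1}$ that appears in the inductive hypothesis, and that the $p^\sharp$-factors assemble correctly into $\prod_{l=k}^{n} p^\sharp_{(F_l^-; F_l)}$. Both facts are essentially forced by the design of the choice sequences $\mathcal{F}_{n,k}$ and the definitions of $F_l^\pm$, but the notation is dense enough that care is needed to make the matching fully explicit.
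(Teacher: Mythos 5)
Your proposal is correct and follows essentially the same route as the paper: downward induction from $k=n$, base case via Theorems~\ref{Thm=HMS} and~\ref{Thm=HMSConditions}, and inductive step via the reduction of \cite[Lemma 3.2]{PSS-Inventiones} followed by H\"older and the HMS bound for the linear factor ${\rm sign}(\xi)^{\epsilon_k}\vert\xi\vert^{it_k}$. The only detail you gloss over is the boundary situation where $F_k$ is the largest element of $I_{F,k}$ (for $\sigma_k=+$) or $F_k=j_1$ (for $\sigma_k=-$), in which there is no adjacent slot to merge into and $T_{\psi_k}(x_{F_k})$ instead multiplies the output of the level-$(k+1)$ multiplier externally; the paper treats these as two additional cases, but the resulting estimate is identical.
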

\begin{proof}
We inductively prove the lemma for decreasing $k$ starting from $k=n$ and going to $k=1$. If $k = n$ then the statement of the lemma becomes
\[
\Vert  {\rm sign}(\xi_{F,n})^{\epsilon_n} \vert \xi_{F, n} \vert^{it_n}  f[ \lambda_{F_{n}^-}^{( \alpha_-)}, \lambda_{F_{n}}^{(\alpha_+)} ]      \Vert_{\mathfrak{m}_{p_{( F^-_n    ;  F_n     
  )}       } }    \lesssim  \Vert f^{(n)} \Vert_\infty  \vert t_n \vert  p^\sharp_{ ( F^-_n    ;  F_n     
  )  }.
\]
This estimate holds true by the H\"ormander-Mikhlin-Schur multiplier theorem, see Theorem \ref{Thm=HMS} and Theorem  \ref{Thm=HMSConditions}. 

Now take $1 \leq k \leq n-1$ and assume that the lemma is proved for $k+1$.  
Let 
\[
x_1 \in \cS_{p_{ (j_{0}; j_{1} )} },\; \ldots, \;x_{n+1-k} \in \cS_{ p_{(j_{n-k}; j_{n+1-k} )} }.
\]
We now have the following four decompositions that each follow from \cite[Lemma 3.2 (iii) and (iv)]{PSS-Inventiones}. We will use the   notational convention of Remark \ref{Rmk=Notation}.  Set again the auxiliary function 
\[
\xi(\lambda) = \lambda_1 - \lambda_2, \quad \lambda = (\lambda_1, \lambda_2) \in \mathbb{R}^2.
\]
We obtain the following.
\begin{enumerate}
\item   If $\sigma_k = +$ and $F_k   \not = j_{n+1-k}$ then
\[
\begin{split}
& T_{ \vert \xi_{F, k} \vert^{it_k,\epsilon_k} \ldots \vert \xi_{F,n} \vert^{it_n,\epsilon_n}  f[ \lambda_{F_{n}^-}^{( \alpha_-)}, \lambda_{F_{n}}^{(\alpha_+)} ]   }( x_1, \ldots, x_{n+1-k} ) \\
= &
T_{ \vert \xi_{F, k+1} \vert^{it_{k+1},\epsilon_{k+1}} \ldots \vert \xi_{F, n} \vert^{it_n,\epsilon_n}  f[ \lambda_{F_{n}^-}^{( \alpha_-)}, \lambda_{F_{n}}^{(\alpha_+)} ]   }( x_1, \ldots, x_{F^-_k },  T_{ \vert \xi \vert^{it_k,\epsilon_k} }(x_{ F_k }) x_{ F_k^+  }    , \ldots, x_{n+1-k} ).
\end{split}
\]
\item  If $\sigma_k =  +$ and $F_k     = j_{n+1-k}$ then
\[
\begin{split}
 & T_{ \vert \xi_{F, k} \vert^{it_k,\epsilon_k} \ldots \vert \xi_{F, n} \vert^{it_n,\epsilon_n} f[ \lambda_{F_{n}^-}^{( \alpha_-)}, \lambda_{F_{n}}^{(\alpha_+)} ]    }( x_1, \ldots, x_{n+1-k} )  \\
= &
T_{ \vert \xi_{F,  k+1} \vert^{it_{k+1},\epsilon_{k+1}} \ldots \vert \xi_{F, n} \vert^{it_n,\epsilon_n} f[ \lambda_{F_{n}^-}^{( \alpha_-)}, \lambda_{F_{n}}^{(\alpha_+)} ]   }( x_1, \ldots, x_{F^-_k} )  T_{ \vert \xi \vert^{it_k,\epsilon_k} }(x_{ F_k }).
\end{split}
\]
\item  If  $\sigma_k = -$ and $F_k  \not = j_1$ then  
\[
\begin{split}
& T_{ \vert \xi_{F, k} \vert^{it_k,\epsilon_k} \ldots \vert \xi_{F, n} \vert^{it_n,\epsilon_n} f[ \lambda_{F_{n}^-}^{( \alpha_-)}, \lambda_{F_{n}}^{(\alpha_+)} ]  }( x_1, \ldots, x_{n+1-k} ) \\
= &
T_{ \vert \xi_{F, k+1} \vert^{it_{k+1},\epsilon_{k+1}} \ldots \vert \xi_{F,  n} \vert^{it_n,\epsilon_n} f[ \lambda_{F_{n}^-}^{( \alpha_-)}, \lambda_{F_{n}}^{(\alpha_+)} ]  }( x_1, \ldots, x_{F^-_k}  T_{ \vert \xi \vert^{it_k,\epsilon_k} }(x_{ F_k  }),  x_{ F_k^+ }    , \ldots, x_{n+1-k} ).
\end{split}
\]
\item  If  $\sigma_k = -$ and $F_k   = j_1$ then 
\[
\begin{split}
& T_{ \vert \xi_{F,k} \vert^{it_k,\epsilon_k} \ldots \vert \xi_{F,n} \vert^{it_n,\epsilon_n} f[ \lambda_{F_{n}^-}^{( \alpha_-)}, \lambda_{F_{n}}^{(\alpha_+)} ]   }( x_1, \ldots, x_{n+1-k} ) \\
= &  T_{ \vert \xi \vert^{it_k,\epsilon_k} }(x_{ 1} ) 
T_{ \vert \xi_{F,  k+1} \vert^{it_{k+1},\epsilon_{k+1}} \ldots \vert \xi_{F,  n} \vert^{it_n,\epsilon_n} f[ \lambda_{F_{n}^-}^{( \alpha_-)}, \lambda_{F_{n}}^{(\alpha_+)} ]  }(   x_2 , \ldots, x_{n+1-k} ).
\end{split}
\]
\end{enumerate}
In each of these four cases the lemma follows from induction and the H\"ormander-Mikhlin-Schur theorem, see Theorem \ref{Thm=HMS} and Theorem  \ref{Thm=HMSConditions}. Indeed, this yields the following estimates. 
\begin{enumerate}
\item We obtain
\[
\begin{split}
& \Vert   \left( \prod_{i=k}^n {\rm sign}(\xi_{F,i})^{\epsilon_i} \right)   \vert \xi_{F, k} \vert^{it_k} \ldots \vert \xi_{F,  n} \vert^{it_n} f[ \lambda_{F_{n}^-}^{( \alpha_-)}, \lambda_{F_{n}}^{(\alpha_+)} ]  \Vert_{\mathfrak{m}_{ p_{(j_{0}; j_{1} )},    \ldots,    p_{(j_{n-k}; j_{n+1-k} )} } }    \\
\leq  &
\Vert    \left( \prod_{i=k+1}^n {\rm sign}(\xi_{F,i})^{\epsilon_i} \right)  \vert \xi_{F,  k+1} \vert^{it_{k+1}} \ldots \vert \xi_{F,  n} \vert^{it_n} f[ \lambda_{F_{n}^-}^{( \alpha_-)}, \lambda_{F_{n}}^{(\alpha_+)} ]  \Vert_{ \mathfrak{m}_{ p_{(j_{0}; j_{1} )}, \ldots,   p_{(F_k^-   ; F_k^+)},    \ldots,   p_{ (j_{n-k}; j_{n+1-k} ) } }  }    \\
& \qquad \times \qquad 
   \Vert {\rm sign}(\xi)^{\epsilon_k} \vert \xi \vert^{it_k} \Vert_{\mathfrak{m}_{ p_{(  F_k^-  ; F_k   )} } }  \\ 
 \lesssim  & \Vert f^{(n)} \Vert_\infty |\prod_{l=k+1}^n t_l | \left( \prod_{l=k+1}^{n} p^\sharp_{ ( F_l^-   ; F_l )}  \right)   |  t_k |      p^\sharp_{( F_k^-  ; F_k  )}.
\end{split}
\]
\item We obtain 
\[
\begin{split}
& \Vert   \left( \prod_{i=k}^n {\rm sign}(\xi_{F,i})^{\epsilon_i} \right)    \vert \xi_{F,  k} \vert^{it_k} \ldots \vert \xi_{F, n} \vert^{it_n}  f[ \lambda_{F_{n}^-}^{( \alpha_-)}, \lambda_{F_{n}}^{(\alpha_+)} ]  \Vert_{ \mathfrak{m}_{p_{(j_{0}; j_{1} )},   \ldots,  p_{(j_{n-k}; j_{n+1-k} ) }}}     \\
\leq  &
\Vert   \left( \prod_{i=k+1}^n {\rm sign}(\xi_{F,i})^{\epsilon_i} \right)  \vert \xi_{F, k+1} \vert^{it_{k+1}} \ldots \vert \xi_{F, n} \vert^{it_n} f[ \lambda_{F_{n}^-}^{( \alpha_-)}, \lambda_{F_{n}}^{(\alpha_+)} ]  \Vert_{\mathfrak{m}_{ p_{(j_{0}; j_{1} )},  \ldots,   p_{(j_{n-k-1}; j_{n-k} )} }}    \\
& \qquad \times \qquad  \Vert  {\rm sign}(\xi)^{\epsilon_k}  \vert \xi \vert^{it_k} \Vert_{ \mathfrak{m}_{ p_{(j_{n-k}; j_{n+1-k} )} }}     \\ 
 \lesssim  &  \Vert f^{(n)} \Vert_\infty | \prod_{l=k+1}^n t_l |   \left( \prod_{l=k+1}^{n} p^\sharp_{ (F^-_l   ;  F_l) }  \right)   |   t_k  |     p^\sharp_{(  F_k^-   ; F_k    )}.
\end{split}
\]
\item We obtain, with $F_k^{--}$ the largest index smaller than $F_k^-$ that is contained in $I_{F, k}$,
\[
\begin{split}
& \Vert   \left( \prod_{i=k}^n {\rm sign}(\xi_{F,i})^{\epsilon_i} \right)    \vert \xi_{F,  k} \vert^{it_k} \ldots \vert \xi_{F,  n} \vert^{it_n} f[ \lambda_{F_{n}^-}^{( \alpha_-)}, \lambda_{F_{n}}^{(\alpha_+)} ]  \Vert_{  \mathfrak{m}_{ p_{(j_{0}; j_{1} )}, \ldots, p_{(j_{n-k}; j_{n+1-k} ) }}}     \\
\leq  & \Vert  \left( \prod_{i=k+1}^n {\rm sign}(\xi_{F,i})^{\epsilon_i} \right) \vert \xi_{F,  k+1} \vert^{i t_{k+1}} \ldots \vert \xi_{F,  n} \vert^{it_n} f[ \lambda_{F_{n}^-}^{( \alpha_-)}, \lambda_{F_{n}}^{(\alpha_+)} ]  \Vert_{ \mathfrak{m}_{ p_{(j_{0}; j_{1} )} , \ldots, p_{(F_k^{--} ; F_k)}, \ldots,    p_{(j_{n-k}; j_{n+1-k} ) }}}  \\
& \qquad \times \qquad  \Vert  {\rm sign}(\xi)^{\epsilon_k} \vert \xi \vert^{it_k} \Vert_{   \mathfrak{m}_{p_{(  F_k^- ; F_k   )} }  }  \\ 
 \lesssim  & \Vert f^{(n)} \Vert_\infty \  |  \prod_{l=k+1}^n t_l | \left( \prod_{l=k+1}^{n} p^\sharp_{ ( F^-_l   ;  F_l  )}  \right)    | t_k |     p^\sharp_{ ( F^-_k  ; F_k  )}.
\end{split}
\]
\item We obtain
\[
\begin{split}
& \Vert  \left( \prod_{i=k}^n {\rm sign}(\xi_{F,i})^{\epsilon_i} \right)    \vert \xi_{F, k} \vert^{it_k} \ldots \vert \xi_{F, n} \vert^{it_n}  f[ \lambda_{F_{n}^-}^{( \alpha_-)}, \lambda_{F_{n}}^{(\alpha_+)} ]   \Vert_{  \mathfrak{m}_{ p_{(j_{0}; j_{1} )} , \ldots ,   p_{(j_{n-k}; j_{n+1-k} ) }}}     \\
\leq  &
\Vert \vert  \left( \prod_{i=k+1}^n {\rm sign}(\xi_{F,i})^{\epsilon_i} \right) \xi_{F, k+1} \vert^{it_{k+1}} \ldots \vert \xi_{F, n} \vert^{it_n}  f[ \lambda_{F_{n}^-}^{( \alpha_-)}, \lambda_{F_{n}}^{(\alpha_+)} ]   \Vert_{ \mathfrak{m}_{p_{(j_{1}; j_{2} )} ,  \ldots ,    p_{(j_{n-k}; j_{n+1-k} ) }}}  \\
& \qquad \times \qquad \Vert  {\rm sign}(\xi)^{\epsilon_k} \vert \xi \vert^{it_k} \Vert_{ \mathfrak{m}_{ p_{(j_{0}; j_{1} )} }}   \\ 
 \lesssim  &  \Vert f^{(n)} \Vert_\infty | \prod_{l=k+1}^n t_l | \left( \prod_{l=k+1}^{n} p^\sharp_{ ( F^-_l  ; F_l  )} \right)    | t_k |      p^\sharp_{ ( F^-_k  ; F_k  )}.
\end{split}
\]
\end{enumerate}

In each of the four cases the final estimate is the required estimate of the lemma.

\end{proof}

We will now prove the main result of this paper. 

\begin{theorem}\label{Thm=MainTheorem}
There exists a constant $C_n > 0$ such that for every $f \in C^n(\mathbb{R})$ with $\Vert f^{(n)} \Vert < \infty$ and for every $1 < p, p_1, \ldots, p_n < \infty$ with $p = (\sum_{i=1}^n p_i^{-1})^{-1}$ we have  
\begin{equation}\label{Eqn=MainResult}
\Vert  f^{[n]} \Vert_{  \mathfrak{m}_{p_1, \ldots,  p_n}}   \leq C_n \Vert f^{(n)}\Vert_\infty  \sum_{F \in \mathcal{F}_{n,n-1}}   \prod_{l=1}^{n} p^\sharp_{( F^-_l   ;  F_l   )}. 
\end{equation}
In particular, for {$D_n = C_n \vert \mathcal{F}_{n,n-1} \vert$} we have for any $f \in C^n(\mathbb{R})$ with $\Vert f^{(n)} \Vert < \infty$ and  $1 < p < \infty$ 
\begin{equation}\label{Eqn=SimplifiedMainEstimate}
\Vert  f^{[n]} \Vert_{ \mathfrak{m}_{ np,  \ldots,  np}}   \leq D_n \Vert f^{(n)}\Vert_\infty p^\ast p^n.
\end{equation}
\end{theorem}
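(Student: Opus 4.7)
The strategy is to combine Proposition \ref{Prop=KeyDecompositionStrong_theta}, Proposition \ref{Prop=FourierTypeDecomposition}, and Lemma \ref{Lem=SchrodingerEstimate} in sequence. First, I would invoke Proposition \ref{Prop=KeyDecompositionStrong_theta} to decompose
\[
f^{[n]} = \sum_{F \in \mathcal{F}_{n,n-1}} \sum_{\substack{\alpha = (\alpha_-, \alpha_+) \in \NNOne^2 \\ \vert\alpha\vert = n+1}} H_{F,\alpha}(\xi_{F,1}, \ldots, \xi_{F,n}) \, f[\lambda_{F_n^-}^{(\alpha_-)}, \lambda_{F_n}^{(\alpha_+)}]
\]
as a pointwise identity on the full-measure set $D_n$, which lifts to an equality of Schur multiplier symbols via Remark \ref{Rmk=ZeroSet}. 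Since each $H_{F,\alpha} \in \mathcal{H}_n$, I would then apply Proposition \ref{Prop=FourierTypeDecomposition} to expand
\[
H_{F,\alpha}(\xi_{F,1},\ldots,\xi_{F,n}) = \sum_{\epsilon \in \{0,1\}^n} \Bigl(\prod_{i=1}^n {\rm sign}(\xi_{F,i})^{\epsilon_i}\Bigr) \int_{\mathbb{R}^{n-1}} g_{F,\alpha,\epsilon}(s) \prod_{k=1}^n \vert \xi_{F,k} \vert^{it_k(s)} \, ds_1 \ldots ds_{n-1},
\]
where each $g_{F,\alpha,\epsilon}$ is Schwartz and the exponents are $t_1(s) = -s_1$, $t_k(s) = s_{k-1} - s_k$ for $1 < k < n$, and $t_n(s) = s_{n-1}$.

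With this expansion, the Schur multiplier norm of each summand is controlled by applying Lemma \ref{Lem=SchrodingerEstimate} at its base case $k = 1$; since $I_{F,1} = \{0, \ldots, n\}$ gives $p_{(j_{l-1}; j_l)} = p_l$, the lemma yields for each fixed $s$
\[
\Bigl\Vert \Bigl(\prod_{i=1}^n {\rm sign}(\xi_{F,i})^{\epsilon_i}\Bigr) \prod_{k=1}^n \vert\xi_{F,k}\vert^{it_k(s)} f[\lambda_{F_n^-}^{(\alpha_-)}, \lambda_{F_n}^{(\alpha_+)}] \Bigr\Vert_{\mathfrak{m}_{p_1,\ldots,p_n}} \lesssim_n \Vert f^{(n)}\Vert_\infty \prod_{l=1}^n \vert t_l(s)\vert \prod_{l=1}^n p^\sharp_{(F_l^-; F_l)}.
\]
Since $\prod_l \vert t_l(s)\vert$ grows only polynomially in $s$ while $g_{F,\alpha,\epsilon}$ is Schwartz, Minkowski's inequality for integrals and the triangle inequality over $F$, $\alpha$, and $\epsilon$ give (\ref{Eqn=MainResult}), with the finite $s$-integrals absorbed into $C_n$.

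For the simplified bound (\ref{Eqn=SimplifiedMainEstimate}) with $p_i = np$ one has $p_{(F_l^-; F_l)} = np / (F_l - F_l^-)$. The key combinatorial observation is that for $l \leq n - 1$, the set $I_{F,l}$ has at least three distinct elements of $\{0,\ldots,n\}$, so two consecutive elements cannot span the full range; hence $F_l - F_l^- \leq n - 1$, giving $p_{(F_l^-; F_l)} \geq np/(n-1)$ and $(p_{(F_l^-; F_l)})^\ast \leq np/(np - n + 1) \leq n$ for $p \geq 1$. Therefore $p^\sharp_{(F_l^-; F_l)} \leq np$ for $l < n$. At $l = n$ one only has $p_{(F_n^-; F_n)} \geq p$, giving $p^\sharp_{(F_n^-; F_n)} \leq \max(np, p^\ast)$, so the full product is bounded by $(np)^{n-1} \max(np, p^\ast)$. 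A dichotomy on $p \geq 2$ (where $p^\ast \leq 2$, so the bound is $\lesssim_n p^n \leq p^\ast p^n$) versus $1 < p < 2$ (where $np \leq 2n$, so the bound is $\lesssim_n p^\ast \leq p^\ast p^n$) then yields $\prod_l p^\sharp_{(F_l^-; F_l)} \lesssim_n p^\ast p^n$, and summing over the finitely many $F \in \mathcal{F}_{n,n-1}$ absorbs this cardinality into $D_n$.

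The principal technical hurdle is in the application of Lemma \ref{Lem=SchrodingerEstimate}: one must check that the nested index structure of $I_{F,l}$ and, crucially, the matching property of Remark \ref{Rmk=Matching} (whereby the innermost exponential $\vert\xi_{F,n}\vert^{it_n}$ acts on the same two variables as $f[\lambda_{F_n^-}^{(\alpha_-)}, \lambda_{F_n}^{(\alpha_+)}]$) together permit exactly $n$ consecutive applications of the H\"ormander-Mikhlin-Schur theorem via \cite[Lemma~3.2(iii)-(iv)]{PSS-Inventiones}, producing one factor $p^\sharp_{(F_l^-; F_l)}$ per step and no extraneous multiplicative loss---this is precisely what the construction of Section \ref{Sect=DivDiffDecomposition} was designed to achieve.
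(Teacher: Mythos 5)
Your proposal is correct and follows essentially the same route as the paper: decompose via Proposition \ref{Prop=KeyDecompositionStrong_theta}, expand each $H_{F,\alpha}$ with Proposition \ref{Prop=FourierTypeDecomposition}, apply Lemma \ref{Lem=SchrodingerEstimate} at its base case $k=1$ (where $p_{(j_{l-1};j_l)}=p_l$), and integrate against the Schwartz weight. Your combinatorial argument for \eqref{Eqn=SimplifiedMainEstimate} --- that $(F_l^-,F_l)=(0,n)$ forces $l=n$ since $|I_{F,l}|\geq 3$ for $l<n$, so only the last factor can contribute a $p^\ast$ --- is exactly the paper's observation, made slightly more explicit by your dichotomy on $p\gtrless 2$.
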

\begin{proof}
Let $f \in C^n(\mathbb{R})$. We apply Proposition \ref{Prop=KeyDecompositionStrong_theta} and obtain the decomposition
\[
  f^{[n]} = \sum_{ \substack{F \in \mathcal{F}_{n, n-1},\\ \alpha = (\alpha_-, \alpha_+) \in \NNOne^2, \\ 
  \vert \alpha \vert = n+1.  } } 
   H_{F, \alpha}( \xi_{F, 1}, \ldots, \xi_{F, n})  
    f[  \lambda_{F_{n}^-}^{( \alpha_-)}, \lambda_{F_{n}}^{(\alpha_+)}  ], 
\]
where $H_{F, \alpha} \in \mathcal{H}_n$ (see Definition~\ref{Dfn=Hn}). Therefore it suffices to show that for every $F \in \mathcal{F}_{n, n-1}$ and every 2-index~${\alpha = (\alpha_-, \alpha_+) \in \NNOne^2}$ with  $\vert \alpha \vert = n+1$ and  any $H :=  H_{F, \alpha } \in \mathcal{H}_n$    we have 
\begin{equation}\label{Eqn=CoreOfKeyEstimate}
\Vert   H( \xi_{F, 1}, \ldots, \xi_{F, n} )   f[ \lambda_{F_{n}^-}^{( \alpha_-)}, \lambda_{F_{n}}^{(\alpha_+)} ]  \Vert_{ \mathfrak{m}_{  p_1,    \ldots,   p_n} }   \lesssim_n   \Vert f^{(n)}\Vert_\infty     \prod_{l=1}^{n} p^\sharp_{( F^-_l  ;  F_l ) }. 
\end{equation} 

 \noindent {\it Proof of the key estimate \eqref{Eqn=CoreOfKeyEstimate}}.  
 By Proposition \ref{Prop=FourierTypeDecomposition} there exist Schwartz functions $g_{ \epsilon}$ indexed by~${\epsilon \in \{ 0, 1 \}^n}$ such that for $\xi = (\xi_1, \ldots, \xi_n) \in \mathbb{R}^{n}$, $\xi_1, \ldots, \xi_n \not = 0$  we have
\[
\begin{split}
     H(\xi) 
= & 
\sum_{\epsilon  \in \{0, 1\} } 
\left(   \prod_{i=1}^n {\rm sign}(\xi_i)^{\epsilon_i} \right)  \int_{\mathbb{R}^n}    g_{ \epsilon_1, \ldots, \epsilon_n}(s_1, \ldots,  s_{n-1})  \\
& \qquad \times \qquad  \vert \xi_1 \vert^{ -is_1} \vert \xi_2 \vert^{ i(s_1 - s_2)}  \ldots   \vert \xi_{n-1} \vert^{i(s_{n-2}-s_{n-1})}  \vert \xi_n \vert^{ is_{n-1}}  ds_1 \ldots ds_{n-1}. 
\end{split}
\]
We may therefore apply Lemma \ref{Lem=SchrodingerEstimate} to the function $H( \xi_{F,1}, \ldots, \xi_{F,n} )  f[\lambda_{I_\rho}^{(\alpha)}]$ and find that 
\begin{equation}\label{Eqn=HPhiEstimate}
\begin{split}
        & \Vert H( \xi_{F,1}, \ldots, \xi_{F,n} )    f[ \lambda_{F_{n}^-}^{( \alpha_-)}, \lambda_{F_{n}}^{(\alpha_+)} ]  \Vert_{\mathfrak{m}_{p_1, \ldots, p_n}} \\
\lesssim_n &  \Vert f^{(n)}\Vert_\infty   \left( \prod_{l=1}^{n} p^\sharp_{ ( F^-_l   ; F_l )} \right) \\
& \qquad \times \qquad \sum_{\epsilon \in \{0, 1\}^n } 
 \int_{\mathbb{R}^n}    g_{ \epsilon}(s_1, \ldots,  s_{n-1}) 
 \vert s_1 \vert \vert s_1 - s_2 \vert \ldots \vert s_{n-2} - s_{n-1} \vert \vert  s_{n-1} \vert     ds_1 \ldots  ds_{n-1}. 
\end{split}
\end{equation}
As $g_{ \epsilon}$ is a Schwartz function the integral in \eqref{Eqn=HPhiEstimate} is finite, which proves  \eqref{Eqn=CoreOfKeyEstimate} and concludes the proof of the main estimate.

\vspace{0.3cm}
 
 \noindent {\it Proof of the  final statement of Theorem \ref{Thm=MainTheorem}}.
 For the final claim suppose that    $p_1 = \ldots = p_n = np$.    For $0\leq i < j \leq n$ we then have $p_{(i;j)} = p \frac{n}{j-i}$. In particular,  for any $1 \leq l \leq n$ we have
\[
p_{( F_l^-     ; F_l ) }  \leq np,  
\]
and so the asymptotic upper estimate $O(p^n)$ for $p \rightarrow \infty$ follows from \eqref{Eqn=MainResult}.  Moreover, note that for~$(i,j) \not = (0,n)$ 
we have $p_{(i;j)} = p \frac{n}{j-i} \geq \frac{n}{n-1} > 1$.  Thus if $p \searrow 1$  the terms in  
\eqref{Eqn=MainResult} of the form~$p_{(i;j)}^\sharp$ with   $0\leq i < j \leq n$ and $(i,j) \not = (0,n)$ remain bounded and do not contribute to the asymptotics.  If in a term in  \eqref{Eqn=MainResult} we have that   $F^-_l  = 0$ and  $ F_l   = n$,   then $0$ and $n$ are neighbouring elements in $I_{F,l}$, which can only happen if $I_{F,l} = \{ 0, n \}$. In particular, $I_{F,l}$ has only two elements and this implies that $l = n$. 
Thus for every summand \eqref{Eqn=MainResult}, at most the term indexed by $l = n$ contributes an order $p^{\sharp}$ to the asymptotic behavior of the estimate as $p \searrow 1$. Hence we conclude the  the asymptotic upper estimate $O(p^\ast)$ for $p \searrow 1$. 
\end{proof}

\begin{remark}
The asymptotic estimate \eqref{Eqn=SimplifiedMainEstimate} for $p \searrow 1$ can be used to prove the following extrapolation result. For every $f \in C^{n}(\mathbb{R})$ we have 
\begin{equation}\label{Eqn=Marcinkiewicz}
\Vert M_{f^{[n]}}: \overbrace{ S_n \times \ldots \times S_n}^{n \textrm{ terms}} \rightarrow M_{1, \infty} \Vert < \infty, 
\end{equation}
where $ M_{1, \infty}$ is the Marcinkiewicz space of all compact operators $x \in B(H)$ whose decreasing rearrangement $(\mu_t(x))_{t \in [0, \infty) }$ satisfies 
\[
\Vert x \Vert_{M_{1, \infty}} = \sup_{t \in [0, \infty)} \log(1+t)^{-1} \int_0^t \mu_s(x) ds < \infty.
\]
The proof of this standard fact can be found in the bilinear case $n=2$ in \cite[Theorem 7.4]{CaspersReimann} (or for $n=1$ in \cite[Corollary 5.16]{CMPS}). Its generalisation to the $n$-linear case is straightforward and omitted here.    
\end{remark}

\begin{remark}
The question whether in \eqref{Eqn=Marcinkiewicz} the operator ideal $M_{1, \infty}$ can be replaced by the ideal of compact operators $x \in B(H)$ with $\Vert x \Vert_{1, \infty} := \sup_{t \in [0, \infty)} t \mu_t(x) < \infty$ remains open. This question has a positive answer in in the following cases: (1) in the linear case of $n=1$, see \cite{CPSZ}, (2) the case of the generalized absolute value function, see \eqref{Eqn=AbsValue} below,  as was shown in \cite{CSZ-Israel, CPSZ-JOT}. 
\end{remark}

\section{Lower bounds and proof of Theorem B}\label{Sect=LowerBounds}

The aim of this section is to obtain lower bounds for the norms of multilinear Schur multipliers of higher order divided differences acting on Schatten classes. Key to our proof are higher order divided differences of a generalized absolute value function. This function and variations of it have been used before in~\cite{CLPST, CaspersReimann, CSZ-Israel} to provide examples and counterexamples of (un)bounded multilinear Schur multipliers. The proof method here for $p \searrow 1$ is a multilinear version of the bilinear result from~\cite[Theorem B, Theorem 8.4]{CaspersReimann} and the linear result from~\cite{Davies}. The proof for $p \nearrow \infty$ for $n \geq 3$ requires the introduction one extra concept compared to the bilinear case \cite{CaspersReimann}:  Cotlar's identity.  

\subsection{Cotlar's  identity} We recall the discrete version of Cotlar's identity, as well as the equivalent fact that a certain function space 
(see~\eqref{Eqn=HATorus}) forms an algebra over $\mathbb{C}$.  This idea has been known for a long time, for the discrete case we refer to \cite[p.\ 421]{AIB}.

We view the torus $\mathbb{T}$ as the complex unit circle and let $z$ denote the identity map on $\mathbb{T}$. 
Moreover, we let the space of polynomials on $\mathbb{T}$ and its subspace of polynomials of mean zero be denoted by
\[
\mathcal{A}(\mathbb{T}) := {\rm span} \{ z^k \mid k \in \mathbb{Z} \}, \quad \mathcal{A}(\mathbb{T})^{\circ} := {\rm span} \{ z^k \mid k \in \mathbb{Z}\setminus\{0\} \}.
\]
The discrete Hilbert transform is given by
\[
H: \mathcal{A}(\mathbb{T}) \rightarrow \mathcal{A}(\mathbb{T}),\; z^k \mapsto {   - i \sign(k)}  z^k,
\] 
where by convention $\sign(0) = 0$. 

We write $\Re (a+ib) = a$ and $\Im (a+ib) = b$ for the real {and imaginary} part of a complex number or  function. Similarly, we let $\Re \mathcal{A}(\mathbb{T})$ (resp.\ $\Re \mathcal{A}(\mathbb{T})^\circ$)    denote  the real-valued functions in $\mathcal{A}(\mathbb{T})$ ({resp.\ $\mathcal{A}(\mathbb{T})^\circ$}). We shall repeatedly make use of the fundamental fact (see \cite{AIB}) that $H$ preserves the real-valued functions and  that  
\[
\Vert H: L^p(\mathbb{T}) \rightarrow  L^p(\mathbb{T}) \Vert \approx p^\ast p \approx \Vert H: \Re L^p(\mathbb{T})^\circ \rightarrow  \Re L^p(\mathbb{T})^\circ \Vert. 
\]
Let 
\begin{equation}\label{Eqn=HATorus}
\mathcal{HA}(\mathbb{T}) = \{ F \in  \mathcal{A}(\mathbb{T})^\circ \mid   \Im(F) =   H  \Re(F)  \}.
\end{equation}
 The following lemma is an equivalent formulation of Cotlar's identity\footnote{It can  be verified directly by taking $f$ and $g$ equal to $\cos_k = \frac{1}{2}(z^k + z^{-k})$ or $\sin_k = -\frac{i}{2}(z^k - z^{-k})$ with $k \in \mathbb{N}_{\geq 1}$ and using standard trigonometric fomulae.  } 
 \begin{equation}\label{eqn: cotlar}
     {(Hf)(Hg)-fg=H(fH(g)+H(f)g),\quad f,g\in\Re\mathcal{A}(\mathbb{T})^{\circ},}
 \end{equation}
 a proof of which can be found in \cite[p.\ 421]{AIB}.

\begin{lemma}\label{Lem=Multiply}
 $\mathcal{HA}(\mathbb{T})$ is a (non-unital) algebra over $\mathbb{C}$.   
\end{lemma}
\begin{proof}
{By linearity of $H$ and $H^2 = - {\rm Id}$ on $\mathcal{A}(\mathbb{T})^\circ$, it holds that $\mathcal{HA}(\mathbb{T})$ is a vector space over $\mathbb{C}$.}
It remains to show that~$\mathcal{HA}(\mathbb{T})$ is closed under  products. 
Let $F, G \in   \mathcal{HA}(\mathbb{T})$. Then there exist $f,g \in \Re \mathcal{A}(\mathbb{T})^\circ$ such that  
\[
F =  f + i H(f), \quad G = g + i H(g).  
\]
Then 
\[
FG = fg - H(f) H(g) + i (f H(g) + H(f) g ). 
\]
The property $\Im(FG)  =  H( \Re(FG))$ is then equivalent to Cotlar's identity~\eqref{eqn: cotlar}. That $FG$ has mean zero follows as the Fourier coefficients of $F$ and $G$ can only be nonzero for   positive (nonzero) powers of $z$ and hence the same holds for $FG$. 
\end{proof}

The following proposition shows that $H(f H(f)^{k-1})$ and $H(f)^k$ differ by a term that is at most of order $\mathcal{O}(p^{k-1})$.

\begin{proposition}\label{Prop=DifferenceOpnminusone}
For $k \in \mathbb{N}_{\geq 2}$ there exists a constant $C_k >0$ and $k$-linear maps 
\[
R_k: \mathcal{A}(\mathbb{T}) \times \ldots \times \mathcal{A}(\mathbb{T}) \rightarrow \mathcal{A}(\mathbb{T})
\]
such that for every $f \in \Re \mathcal{A}(\mathbb{T})^\circ$ we have
\[
  H(f)^k    =  { k } H(f H(f)^{k-1})  +  R_k(f, \ldots, f), 
\]
and for every $p \geq 2$ we have 
\[
\Vert R_k(f, \ldots, f) \Vert_p \leq C_k p^{k-1} \Vert f \Vert_{kp}^{k}.
\]
\end{proposition}
\begin{proof}
Let $f \in \Re \mathcal{A}(\mathbb{T})^{ \circ}$ so that $F = f + i Hf \in \mathcal{HA}(\mathbb{T})$.  We have 
\[
\begin{split}
\Re (F^k) = & \sum_{\substack{ l=0 \\ l \textrm{ even} }}^k  (-1)^{\frac{l}{2}} f^{k-l} H(f)^{l}  \binom{k}{l}, \\
\Im( F^k) = & \sum_{\substack{ l=0 \\ l \textrm{ odd} }}^k  (-1)^{\frac{l-1}{2}} f^{k-l} H(f)^{l}  \binom{k}{l}. 
\end{split}
\]
By  Lemma \ref{Lem=Multiply} it follows that $F^k \in  \mathcal{HA}(\mathbb{T})$. Hence $\Im (F^k) = H \Re (F^k)$, and applying $H$ to this equality gives  $\Re (F^k) = - H \Im( F^k )$. We argue that  the latter two equalities yield the statement we need to prove.  

In case $k$ is even,  $\Re( F^k) = - H \Im (F^k)$ gives the following equality, where the terms $l=k$ and~$l=k-1$ are on the left hand side of the equality and all other terms are on the right hand side.
\begin{multline*}
  H(f)^{k} -  { k}
 H (f H(f)^{k-1}) \\
 = 
-  \sum_{\substack{ l=0 \\ l \textrm{ even} }}^{k-2}  (-1)^{\frac{l+k}{2}} f^{k-l} H(f)^{l}    \binom{k}{l} -  \sum_{\substack{ l=0 \\ l \textrm{ odd} }}^{k-2}  (-1)^{\frac{l+k-1}{2}} H(f^{k-l} H(f)^{l} ) \binom{k}{l} =: R_k(f, \ldots, f ). 
\end{multline*}

In each summand in $R_k(f, \ldots, f)$, there are at most $k-1$ applications of $H$. Hence there is some constant $C_k >0$ such that for every  $p \geq 2$ we have
\[
\Vert R_k(f) \Vert_p \leq C_k p^{k-1}  \Vert f \Vert_{kp}^{k}.
\]

In case $k$ is odd, $\Im (F^k) = H \Re (F^k)$  similarly gives
\begin{multline*}
  H(f)^{k} - { k}
 H (f H(f)^{k-1}) \\
 = 
  \sum_{\substack{ l=0 \\ l \textrm{ odd} }}^{k-2}  (-1)^{\frac{l+k}{2}} f^{k-l} H(f)^{l}  \binom{k}{l}  +  \sum_{\substack{ l=0 \\ l \textrm{ even} }}^{k-2}  (-1)^{\frac{l+k-1}{2}} H(f^{k-l} H(f)^{l} ) \binom{k}{l} =: R_k(f, \ldots, f ). 
\end{multline*}

Again, as in each summand of $R_k(f, \ldots, f)$ there are at most $k-1$ applications of $H$, we conclude the proposition.   
\end{proof}

\subsection{Generalized absolute value functions}
For $n \in \NNOne$ we define the generalized absolute value function  
\begin{equation}\label{Eqn=AbsValue}
a_n(s) = \vert s \vert s^{n-1},\quad  s \in \mathbb{R}.
\end{equation}
Note that $a_n \in  C^{n-1}(\mathbb{R}) \cap C^n(\mathbb{R} \backslash \{ 0 \} )$ and moreover
\[
a_n^{(n)}(s) = {\rm sign}(s) n!, \quad s \in \mathbb{R} \backslash \{ 0 \}. 
\]
 Therefore by the integral expresssion \cite[Lemma 5.1]{PSS-Inventiones} for divided differences we see that for~$\sigma \in \{-1, 1 \}$ and $s = (s_0, \ldots, s_n) \in \mathbb{R}^{n+1} \backslash \{ 0 \}$ with $\sigma s_0, \ldots, \sigma s_n \geq 0$ we have 
\begin{equation}\label{Eqn=SignDerivative}
a_n^{[n]}(s_0, \ldots, s_n) = \sigma n!.
\end{equation}
We now prove the following asymptotic estimates for the $n$th order divided difference of $a_n$.  Recall that ${\rm sign}(0) = 0$ by convention. We let $\chi_{\geq 0}$ denote the indicator function of the set of non-negative numbers. 

The $\epsilon_i$'s in the next lemma force the limit that we compute to be somewhat nicer, as it avoids any diagonal terms. This makes the presentation of our proof cleaner. 

\begin{lemma}\label{Lem=QLimit}
Let $q \in (0,1)$ and $n \geq 1$. For $i_0, \ldots, i_n \in \NNZero$ we have
 \begin{equation}\label{Eqn=FirstLimit}
\lim_{k \rightarrow \infty}  \lim_{l \rightarrow \infty} a_n^{[n]}(q^{k i_0}, q^{l i_1},  \ldots, q^{l i_{n-1}} , -q^{k i_n}) = n! \: \sign(i_n - i_0). 
\end{equation}
Fix numbers $0 = \epsilon _n = \epsilon_0  < \epsilon_1 < \ldots < \epsilon_{n-1}  < 1$.  
For  $i_0, \ldots, i_n \in \NNZero$ we have
\begin{equation}\label{Eqn=SecondLimitInThm}
\lim_{k \rightarrow \infty}    a_n^{[n]}(q^{k (i_0 + \epsilon_0)}, -q^{k (i_1 + \epsilon_1)}, -q^{k (i_2 + \epsilon_2)}, -q^{k (i_3 + \epsilon_3)},    \ldots, - q^{k (i_{n} + \epsilon_n)}) =  \alpha(i_0, \ldots, i_n) +  \beta(i_0, \ldots, i_n),
\end{equation}
where  
\[
\begin{split}
\alpha(i_0, \ldots, i_n) = & n! \: {\rm sign}(i_n - i_0) \prod_{l=1}^{n-1}  \chi_{\geq  0}(i_l - i_0), \\
\beta(i_0, \ldots, i_n) = &{-} n! \:   \sum_{k=1}^{{n-1}}  \chi_{< 0}( i_k - i_0 )    \prod_{l=1}^{k-1}    \chi_{\geq 0}(i_l - i_0). 
\end{split}
\]
\end{lemma}
\begin{proof}
We first prove~\eqref{Eqn=FirstLimit}. As $a_n^{[n]}$ is continuous except at $0$, it suffices to show that
\begin{equation}\label{Eqn=LimitInduction}
\lim_{k \rightarrow \infty}  a_n^{[n]}(q^{k i_0}, 0, \ldots, 0,    -q^{k i_n})
= n! \: \sign(i_n - i_0).
\end{equation}
We have by \eqref{eqn: insert_one_xi}  and \eqref{Eqn=SignDerivative} that
\[
\begin{split}
a_n^{[n]}(q^{k i_0}, 0, \ldots, 0,  -q^{k i_n}) = & \frac{q^{k i_0}}{ q^{k i_0} + q^{k i_n}} f^{[n]}(q^{k i_0}, 0, \ldots, 0) + \frac{q^{k i_n}}{q^{k i_0} + q^{k i_n}} f^{[n]}(0, \ldots, 0 , -q^{k i_n}) \\
= &   \frac{q^{k i_0} n! }{q^{k i_0} + q^{k i_n}} - \frac{q^{k i_n} n!}{q^{k i_0} + q^{ k i_n}} = 
\frac{(q^{k i_0} - q^{k i_n}) n! }{q^{k i_0} + q^{k i_n}}.  
\end{split}
\] 
Taking the limit $k \rightarrow \infty$ gives the desired formula \eqref{Eqn=LimitInduction}. 

Next we prove~\eqref{Eqn=SecondLimitInThm}. We have by  \eqref{eqn: insert_one_xi} for $l = 1, \ldots, n-1$ that
\begin{equation}\label{Eqn=SecondLimitInThmPrf}
\begin{split}
  & a_n^{[n]}(q^{k (i_0+ \epsilon_0)}, 0, \ldots, 0, -q^{k (i_l + \epsilon_l) },   \ldots, -q^{k (i_n+ \epsilon_n)  }) \\
  = &
  \frac{q^{k (i_0 + \epsilon_0) }}{ q^{k (i_0 + \epsilon_0)}{  +} q^{k (i_l + \epsilon_l)}}    a_n^{[n]}(q^{k (i_0 + \epsilon_0)}, 0, \ldots, 0, -q^{k (i_{l+1} + \epsilon_{l+1}) },   \ldots, -q^{k (i_{n}  + \epsilon_n ) }) \\
    & +\frac{q^{k (i_l + \epsilon_l)   }}{ q^{k (i_l + \epsilon_l)  } {  +} q^{k (i_0 + \epsilon_0) }}  
    a_n^{[n]}(0, 0, \ldots, 0, -q^{k (i_l + \epsilon_l) },   \ldots, -q^{k (i_n + \epsilon_n)  }).
    \end{split}
\end{equation}
Our choice of $\epsilon_i$'s guarantees that {for $(a,b)\neq(0,n)$}, $i_a + \epsilon_a$ is never equal to  $i_b + \epsilon_b$ and moreover  $i_a + \epsilon_a < i_b + \epsilon_b$ if and only if $i_a \leq i_b$.  
So for $l =1, \ldots, n-1$, 
\begin{equation}\label{Eqn=TakingLimits}
\begin{split}
   & \lim_{k \rightarrow \infty}  a_n^{[n]}(q^{k (i_0 + \epsilon_0)}, 0, \ldots, 0, -q^{k (i_{l} +    \epsilon_l)},   \ldots, -q^{k (i_{n}  + \epsilon_n)} ) \\
=  & \chi_{\geq 0}(i_l - i_0)
\lim_{k \rightarrow \infty}  a_n^{[n]}(q^{k (i_0 + \epsilon_0)}, 0, \ldots, 0, -q^{k ( i_{l+1}  + \epsilon_{l+1})},   \ldots, -q^{k (i_{n} +  \epsilon_n)  }) \\ 
& { -}  \chi_{< 0}(i_l - i_0) n!.  
\end{split}
\end{equation}
Applying \eqref{Eqn=TakingLimits} inductively for $l =1, \ldots, n-1$ and then using \eqref{Eqn=LimitInduction} when $l = n$ gives the result. 
\end{proof}

\subsection{Discrete Schur multipliers}  We will require the notion of discrete Schur multipliers and a discretization theorem. Let $n \in \NNOne$ and let  $F \subseteq \mathbb{Z}$. For $s \in F$ let $P_s$ be the orthogonal projection of $\ell^2(F)$ onto $\mathbb{C} \delta_s$. Then for  $\phi \in  \ell^\infty(F^{\times n+1})$ we shall consider the bounded $n$-linear map 
\[
\begin{split}
T_\phi: S_2(\ell^2(F) ) \times \ldots \times S_2(\ell^2(F) ) \rightarrow &  S_2(\ell^2(F) ), \\
(x_1, \ldots, x_n) \mapsto & \sum_{s_0, \ldots, s_{n} \in F} \phi(s_0, \ldots, s_n)  P_{s_0 }x_1 P_{s_1} x_2 P_{s_2} \ldots P_{s_{n-1}} x_n P_{s_n}.  
\end{split}
\]
In case $F = \mathbb{Z}$ we write $\ell^2(\mathbb{Z}) = \ell^2$. Now let $1 < p, p_1, \ldots, p_n < \infty$. In case $T_\phi$ extends boundedly from $S_2(\ell^2(F) ) \cap S_{p_i}(\ell^2(F) )$ to $S_{p_i}(\ell^2(F) )$ where $i = 1, \ldots, n$ then we shall consider $T_\phi$ as  a multilinear map 
\[
T_\phi: S_{p_1}(\ell^2(F) ) \times \ldots \times S_{p_n}(\ell^2(F) ) \rightarrow   S_{p}(\ell^2(F) ). 
\]
It will always be clear from the context whether $T_\phi$ refers to a discrete or continuous Schur multiplier and on which space it acts.  

As a special case of a discrete linear Schur multiplier, we consider 
 the   upper and lower {\it triangular truncations}
\[
T^{\pm}_F: S_p(\ell^2(F))\rightarrow S_p(\ell^2(F)),\; x \mapsto \sum_{\substack{ s,t \in F \\ \pm s < \pm t}  } P_s x P_t. 
\]
We naturally see $S_p(\ell^2(F))$ as a subspace of $S_p(\ell^2)$ and simply write $T^{\pm} = T^{\pm}_{F}$ 
when~$F \subseteq \mathbb{Z}$ is clear from context.

The proof of the following lemma is a straightforward modification of the bilinear case proved in \cite[Lemma 8.3]{CaspersReimann} and therefore we omit it. Note that the most important part of the proof of this lemma is the application of  \cite[Theorem  2.2]{CKV} which is stated already for general multilinear Schur multipliers. 

\begin{proposition}\label{Prop=DeLeeuw}
Let  $q \in (0,1)$. 
Fix numbers $0 = \epsilon_n = \epsilon_0  < \epsilon_1 < \ldots < \epsilon_{n-1}  < 1$.    For $k,l \in \NNZero$ we define 
\[
\begin{split}
 \overbrace{ \NNZero \times \ldots \times \NNZero }^{n+1} \rightarrow  & \mathbb{R}^{n+1}\\
\phi_{k,l}^1: (i_0, i_1, \ldots, i_{n-1}, i_n) \mapsto &  (q^{k i_0}, q^{l i_1}, \ldots, q^{l i_{n-1}}, -q^{k i_n}), \\
\phi_{k}^2: (i_0, i_1, \ldots, i_{n-1}, i_n) \mapsto &  (q^{k (i_0 + \epsilon_0) }, -q^{k (i_1 + \epsilon_1)},  -q^{k (i_2 + \epsilon_2)}, -q^{k (i_3 + \epsilon_3)},  \ldots,   -q^{k (i_n + \epsilon_n)}). \\
\end{split}
\]
Then for every $k,l \in \NNZero$, $j =1,2$, and $1 < p <  \infty$ we have
\[
  \Vert  T_{a_n^{[n]} \circ \phi_{k,l}^j }: S_{np}(\ell^2) \times \ldots  \times  S_{np}(\ell^2) \rightarrow S_p(\ell^2) \Vert 
\leq \Vert  T_{a_n^{[n]}  }: S_{np}  \times \ldots  \times  S_{np}  \rightarrow S_p  \Vert.
\]
\end{proposition}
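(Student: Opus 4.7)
The statement is a multilinear de Leeuw-type transference: the discrete Schur multiplier $T_{a_n^{[n]}\circ\phi_{k,l}^j}$ is to be realised as a compression of the continuous Schur multiplier $T_{a_n^{[n]}}$ to subspaces spanned by normalised indicators of small intervals around the sample points $\phi_{k,l}^j(\NNZero^{n+1}) \subset \mathbb{R}^{n+1}\setminus\{0\}$. The general multilinear transference theorem~\cite[Theorem~2.2]{CKV} then yields the norm inequality without loss. The bilinear case is worked out in detail in~\cite[Lemma~8.3]{CaspersReimann}, and the $n$-linear adaptation is a routine (if notation-heavy) extension.

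\textbf{Key steps.} First I would encode $\phi_{k,l}^j$ as a tuple of injective maps $\psi^{(j)}_0,\ldots,\psi^{(j)}_n:\NNZero\to\mathbb{R}\setminus\{0\}$: for $j=1$, set $\psi_0^{(1)}(i)=q^{ki}$, $\psi_s^{(1)}(i)=q^{li}$ for $1\le s\le n-1$, and $\psi_n^{(1)}(i)=-q^{ki}$; for $j=2$, set $\psi_0^{(2)}(i)=q^{ki}$, $\psi_1^{(2)}(i)=-q^{ki}$, $\psi_2^{(2)}(i)=q^{ki}$, and $\psi_s^{(2)}(i)=q^{li}$ for $3\le s\le n$. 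By construction, $a_n^{[n]}\circ\phi_{k,l}^j$ is the pull-back of the continuous symbol $a_n^{[n]}$ along $(\psi^{(j)}_0,\ldots,\psi^{(j)}_n)$. Next I would choose, for each $s$ and each $i\in\NNZero$, disjoint intervals $A_{s,i}\subseteq[-1,1]\setminus\{0\}$ containing $\psi^{(j)}_s(i)$, and introduce the isometries $V_s:\ell^2(\NNZero)\to L^2(\mathbb{R})$ sending $\delta_i$ to $|A_{s,i}|^{-1/2}\chi_{A_{s,i}}$. Realising the discrete multiplier as the norm-limit, as the intervals $A_{s,i}$ shrink around $\psi^{(j)}_s(i)$, of the appropriate compressions of the continuous multiplier by these isometries, and invoking \cite[Theorem~2.2]{CKV} on each compression, yields the desired inequality.

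\textbf{Main obstacle.} The principal technical point is handling the lack of continuity of $a_n^{[n]}$ (which is discontinuous on parts of the diagonal) when passing to the shrinking-intervals limit. Fortunately, \cite[Theorem~2.2]{CKV} is formulated for bounded Borel symbols and requires no continuity, and $a_n^{[n]}$ is bounded on $[-1,1]^{n+1}$ by \eqref{Eqn=SignDerivative} together with the standard integral representation of divided differences, so the compressions remain uniformly bounded throughout the limiting procedure. Beyond this, the only additional effort compared to the bilinear case is the $n$-linear bookkeeping extending the explicit construction of \cite[Lemma~8.3]{CaspersReimann}, for which the multilinear form of \cite[Theorem~2.2]{CKV} already supplies the required framework.
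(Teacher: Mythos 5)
Your proposal matches the paper's intended argument: the paper omits the proof, stating only that it is a straightforward modification of the bilinear case in \cite[Lemma 8.3]{CaspersReimann} whose essential ingredient is the multilinear transference theorem \cite[Theorem 2.2]{CKV}, which is exactly the compression-by-interval-indicators scheme you describe. No discrepancy to report.
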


Now Theorems \ref{Thm=LowerBoundAt1} and \ref{Thm=LowerBoundAtInfty}  give lower bounds for the asymptotics of multilinear Schur multipliers of divided differences for $p \searrow 1$ and $p \rightarrow \infty$ respectively.

\begin{theorem}\label{Thm=LowerBoundAt1} 
There exists a constant $C > 0$ such that for every $n \in \NNOne$ and every  $1 < p < \infty$ we have
\[
C n! p^\ast \leq \Vert T_{a_n^{[n]}}: S_{np} \times \ldots  \times  S_{np} \rightarrow S_p \Vert.
\]
\end{theorem}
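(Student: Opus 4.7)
The plan is to realise a lower bound on $\Vert T_{a_n^{[n]}}: S_{np} \times \cdots \times S_{np} \to S_p\Vert$ by transferring to the discrete setting via Proposition~\ref{Prop=DeLeeuw} and extracting, in the iterated limit $l\to\infty$ followed by $k\to\infty$, a lower bound inherited from triangular truncations. Proposition~\ref{Prop=DeLeeuw} applied with $j=1$ asserts that for every $k,l \in \NNZero$,
\[
\Vert T_{a_n^{[n]} \circ \phi_{k,l}^1}: S_{np}(\ell^2) \times \cdots \times S_{np}(\ell^2) \to S_p(\ell^2)\Vert \leq \Vert T_{a_n^{[n]}}: S_{np} \times \cdots \times S_{np} \to S_p\Vert.
\]
I would restrict all test operators $A_1,\dotsc,A_n$ to finite-rank operators supported on a common finite subset $F\subset \NNZero$, so that the multilinear Schur multiplier is evaluated by a finite sum and the iterated limit commutes with it. Applying Lemma~\ref{Lem=QLimit}\eqref{Eqn=FirstLimit} entrywise then yields
\[
\lim_{k\to\infty}\lim_{l\to\infty}\bigl(T_{a_n^{[n]} \circ \phi_{k,l}^1}(A_1,\dotsc,A_n)\bigr)(i_0,i_n) = n!\,\mathrm{sign}(i_n-i_0)\,(A_1 A_2\cdots A_n)(i_0,i_n)
\]
for all $i_0,i_n \in F$, under the convention $\mathrm{sign}(0)=0$. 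In finite dimensions this pointwise convergence upgrades to convergence in $S_p$, so passing to the limit in the transferred inequality gives
\[
n!\,\Vert T_\psi(A_1\cdots A_n)\Vert_{S_p} \leq \Vert T_{a_n^{[n]}}: S_{np} \times \cdots \times S_{np}\to S_p\Vert \cdot \prod_{i=1}^n \Vert A_i\Vert_{S_{np}},
\]
where $T_\psi$ is the linear Schur multiplier with symbol $\psi(i,j)=\mathrm{sign}(j-i)$. The key structural feature is that the limiting symbol depends only on $(i_0,i_n)$, so the multilinear operator collapses to a linear Schur multiplier acting on the product $A_1\cdots A_n$.

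With the problem in factorised form I would apply the following H\"older factorisation. For any finite-rank $x\in S_p$ with polar decomposition $x=u|x|$, set $A_1:=u|x|^{1/n}$ and $A_2=\cdots=A_n:=|x|^{1/n}$; then $A_1 A_2 \cdots A_n = x$ and a direct computation using $u^\ast u |x|^{1/n} = |x|^{1/n}$ yields $\Vert A_i\Vert_{S_{np}} = \Vert x\Vert_{S_p}^{1/n}$ for each $i$, so that $\prod_i \Vert A_i\Vert_{S_{np}} = \Vert x\Vert_{S_p}$. Inserting this into the bound above reduces Theorem~\ref{Thm=LowerBoundAt1} to the linear estimate $\Vert T_\psi\Vert_{S_p \to S_p}\gtrsim p^\ast$. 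This in turn follows from $T_\psi = T^+-T^-$ together with $T^++T^- = \mathrm{id}-P$, where $P$ is the diagonal projection and therefore a contraction on $S_p$. The triangle inequality yields $\Vert T_\psi\Vert_{S_p\to S_p}\geq 2\Vert T^+\Vert_{S_p\to S_p} - 2$, and the Gohberg--Krein lower bound $\Vert T^+\Vert_{S_p\to S_p}\gtrsim p^\ast p$ recalled in the introduction gives $\Vert T_\psi\Vert_{S_p\to S_p}\gtrsim p^\ast p \gtrsim p^\ast$ for all $1<p<\infty$. Density of finite-rank operators in $S_p$ supplies the required finite-rank extremiser $x$.

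The main technical subtlety is the passage to the limit $\lim_{k\to\infty}\lim_{l\to\infty}$ inside the Schatten norm. Restricting all operators to a common finite index set $F$ makes the multilinear kernel a finite sum and reduces $S_p(\ell^2(F))$ to a finite-dimensional space on which all Schatten norms are equivalent to the Hilbert--Schmidt norm; the entrywise convergence coming from Lemma~\ref{Lem=QLimit} then automatically upgrades to $S_p$-norm convergence, and the uniform bound from Proposition~\ref{Prop=DeLeeuw} survives the limit. With that in place, the remaining ingredients, namely the H\"older factorisation via polar decomposition and the triangle-inequality bound $\Vert T_\psi\Vert\geq 2\Vert T^+\Vert-2$, are routine.
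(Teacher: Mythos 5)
Your argument follows the paper's proof essentially step for step: transference to the discrete setting via Proposition~\ref{Prop=DeLeeuw}, the iterated limit from Lemma~\ref{Lem=QLimit} collapsing the multilinear multiplier to $n!\,(T^+-T^-)$ acting on the product $x_1\cdots x_n$, and the H\"older factorisation needed to pass from the supremum over $x_1,\dotsc,x_n$ to the operator norm of $T^+-T^-$ on $S_p$ (the paper uses this factorisation implicitly; your explicit polar-decomposition version and the finite-support/density bookkeeping are both fine).

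The one step that does not go through as written is the final lower bound on $\Vert T^+-T^-\Vert_{S_p\to S_p}$. From $\Vert T^+-T^-\Vert\ge 2\Vert T^+\Vert-\Vert \mathrm{id}-P\Vert\ge 2\Vert T^+\Vert-2$ together with $\Vert T^+\Vert\ge Cpp^\ast$ you cannot conclude $\Vert T^+-T^-\Vert\gtrsim p^\ast$ uniformly in $p$: the function $pp^\ast$ attains its minimum value $4$ at $p=2$, so $2Cpp^\ast-2$ is negative throughout a neighbourhood of $p=2$ whenever $C<1/4$, and the Gohberg--Krein constant is not known (nor claimed) to exceed $1/4$. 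The paper avoids this by citing \cite[Theorem IV.8.2 and IV.7.4]{GohbergKrein} directly for the estimate $Cp^\ast\le\Vert T^+-T^-\Vert_{S_p\to S_p}$. Your version is easily patched: use your estimate only for $p$ outside a compact neighbourhood of $2$, and on that neighbourhood use the trivial bound $\Vert T^+-T^-\Vert\ge 1$ (test on a single off-diagonal matrix unit) together with the boundedness of $p^\ast$ there. As written, however, the inference ``$2\Vert T^+\Vert-2\gtrsim p^\ast p$'' is a genuine, if minor, gap.
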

\begin{proof}
Let $q \in (0,1)$ and recall the map $\phi_{k,l} := \phi_{k,l}^1$ from Proposition \ref{Prop=DeLeeuw}. By Lemma \ref{Lem=QLimit} we have  
\[
\lim_{k \rightarrow \infty} \lim_{l \rightarrow \infty} (a_n^{[n]} \circ \phi_{k,l})(i_0, \ldots, i_n) =  
\lim_{k \rightarrow \infty} \lim_{l \rightarrow \infty}  a_n^{[n]}(q^{k i_0}, q^{l i_1},  \ldots, q^{l i_{n-1}} , -q^{k i_n}) = n! \: {\rm sign}(i_n - i_0). 
\]
Take $x_1, \ldots, x_n \in S_{np}(\ell^2(F))$ with $F \subseteq \mathbb{N}$ finite.  We see that 
\begin{equation}\label{Eqn=HToLimKNew}
\begin{split}
n! (T^+  - T^-)(x_1 \ldots x_n) = & \sum_{s_0, \ldots, s_n \in F} n! \: {\rm sign}(s_n - s_0)  P_{s_0} x_1 P_{s_1} \ldots P_{s_{n-1}} x_n P_{s_{n}} \\
=  & \lim_{k \rightarrow \infty} \lim_{l \rightarrow \infty}  \sum_{s_0, \ldots, s_n \in F} (a_n^{[n]} \circ \phi_{k,l})(  s_0, \ldots, s_n ) P_{s_0} x_1 P_{s_1} \ldots P_{s_{n-1}} x_n P_{s_{n}} \\  
= & \lim_{k \rightarrow \infty} \lim_{l \rightarrow \infty}  T_{a_n^{[n]} \circ \phi_{k,l}}(x_1, \ldots, x_n),
\end{split}
\end{equation}
where the convergence holds in norm. Now by Proposition \ref{Prop=DeLeeuw} the bound of  $T_{a_n^{[n]} \circ \phi_{k,l}}$ is uniform in~$k,l$. As $\cup_F S_p(\ell^2(F))$ is dense in $S_p(\ell^2)$,   where the union is over all finite $F\subset\mathbb{Z}$, the equality~\eqref{Eqn=HToLimKNew} holds for any $x_1, \ldots, x_n \in S_{np}(\ell^2)$. Hence for $x_1, \ldots, x_n \in S_{np}(\ell^2)$ with norm one we see that  
\[
\begin{split}
n!  \Vert (T^+  - T^-)(x_1 \ldots x_n) \Vert_p \leq & \limsup_{k \rightarrow \infty } \limsup_{l \rightarrow \infty } \Vert T_{a_n^{[n]} \circ \phi_{k,l}}(x_1, \ldots, x_n) \Vert_p \\
\leq & \limsup_{k \rightarrow \infty}\limsup_{l \rightarrow \infty}
\Vert T_{a_n^{[n]} \circ \phi_{k,l}}: S_{np}(\ell^2) \times \ldots  \times  S_{np}(\ell^2) \rightarrow S_p(\ell^2) \Vert.
\end{split}
\]
Taking the supremum over all $x_i$ and using  Proposition \ref{Prop=DeLeeuw} we find that,  
\begin{equation}\label{Eqn=EstimateHWithA}
\begin{split}
n! \Vert (T^+  - T^-): S_{p}(\ell^2)   \rightarrow S_p(\ell^2) \Vert
\leq & \limsup_{k \rightarrow \infty}
\Vert T_{a_n^{[n]} \circ \phi_{k,l}}: S_{np}(\ell^2) \times \ldots  \times  S_{np}(\ell^2) \rightarrow S_p(\ell^2) \Vert \\
\leq & 
\Vert T_{a_n^{[n]}}: S_{np} \times \ldots  \times  S_{np} \rightarrow S_p \Vert.
\end{split}
\end{equation}
By \cite[Theorem IV.8.2 and IV.7.4]{GohbergKrein} there exists a constant $C>0$ such that for any~${1 < p < \infty}$ we have
\[
C p^\ast  \leq  
 \Vert (T^+  - T^-): S_{p}   \rightarrow S_p \Vert,
\]
which together with \eqref{Eqn=EstimateHWithA} concludes the proof.  
\end{proof}

\begin{theorem}\label{Thm=LowerBoundAtInfty} 
For every $n \in \mathbb{N}_{\geq 1}$ there exists a constant $C_n >0$ such that for every~${1 < p < \infty}$ we have
\[
C_n p^n\leq \Vert T_{a_n^{[n]}}: S_{np} \times \ldots  \times  S_{np} \rightarrow S_p \Vert.
\]
\end{theorem}
\begin{proof} 
As in the proof of Theorem  \ref{Thm=LowerBoundAt1} we find by Proposition \ref{Prop=DeLeeuw} and Lemma \ref{Lem=QLimit} that
\begin{equation}\label{Eqn=LimitLeeuw} 
\begin{split}
\Vert T_{\alpha +  \beta}: S_{np}(\ell^2) \times \ldots  \times  S_{np}(\ell^2) \rightarrow S_p(\ell^2) \Vert \leq &
\limsup_{k \rightarrow \infty} \Vert T_{a_n^{[n]} \circ \phi_k^2}: S_{np}(\ell^2) \times \ldots  \times  S_{np}(\ell^2) \rightarrow S_p(\ell^2) \Vert \\
\leq & 
\Vert T_{a_n^{[n]}}: S_{np} \times \ldots  \times  S_{np} \rightarrow S_p \Vert.
\end{split}
\end{equation}
Now $\alpha$ and $\beta$ are of Toeplitz form, meaning that we may write 
\[
\begin{split}
\alpha(i_0, \ldots, i_n)  = \widetilde{\alpha}(i_1 - i_0, \ldots, i_n - i_{n-1}), \\
\beta(i_0, \ldots, i_n)  =  \widetilde{\beta}(i_1 - i_0, \ldots, i_n - i_{n-1}),
\end{split}
\]
where
\[
\begin{split}
\widetilde{\alpha}(t_1, \ldots, t_n) = & n! \: {\rm sign}(t_1 + \ldots + t_n) \prod_{l=1}^{n-1}  \chi_{\geq  0}(t_1 + \ldots + t_l ), \\
\widetilde{\beta}(  t_1, \ldots, t_n) = & { -}n! \:   \sum_{k=1}^{{n-1}}  \chi_{< 0}( t_1 + \ldots + t_k )    \prod_{l=1}^{k-1}    \chi_{\geq 0}( t_1 + \ldots + t_l ).
\end{split}
\]
Applying multilinear transference from Schur to Fourier multipliers (see \cite[Theorem B]{CKV}) gives 
\begin{equation}\label{Eqn=TransferFourierSchur}
  \Vert M_{\widetilde{\alpha} + \widetilde{\beta}}: L^{np}(\mathbb{T}) \times \ldots  \times  L^{np}(\mathbb{T}) \rightarrow L^{p}(\mathbb{T}) \Vert
\leq 
\Vert T_{\alpha + \beta}: S_{np}(\ell^2) \times \ldots  \times  S_{np}(\ell^2) \rightarrow S_p(\ell^2) \Vert,
\end{equation}
where the Fourier multiplier $M_\phi$ with symbol $\phi$ is defined as the $n$-linear map determined by
\[
M_\phi(z^{k_1}, \ldots, z^{k_n}) = \phi(k_1, \ldots, k_n) z^{k_1+ \ldots + k_n}.
\]

In the rest of the proof we use the symbol $\approx_{ \mathcal{O}(p^{n-1}) }$ for an equality that holds up to a constant~$C_p$ that only depends on $p \geq 2$ and is at most of order $\mathcal{O}(p^{n-1})$.  We notice that all expressions below are norms of expressions that are $n$-linear in $f \in \Re \mathcal{A}(\mathbb{T})^\circ$, and in particular that we can normalise~$f$ and assume $\Vert f \Vert_{np} = 1$.   We also emphasise that we consider only $p \geq 2$ and thus do not take into account the asymptotics as $p \searrow 1$.

We have that $M_{\widetilde{\beta}}$ is a composition of at most $n-1$ Hilbert transforms and triangular truncations and hence its norm is of order at most $\mathcal{O}(p^{n-1})$ for $p \rightarrow \infty$. Therefore 
\begin{equation}\label{Eqn=AlphaBeta}
\Vert M_{\widetilde{\alpha}  }: L^{np}(\mathbb{T}) \times \ldots  \times  L^{np}(\mathbb{T}) \rightarrow L^{p}(\mathbb{T}) \Vert  \approx_{ \mathcal{O}(p^{n-1}) }
\Vert M_{\widetilde{\alpha} + \widetilde{\beta}   }: L^{np}(\mathbb{T}) \times \ldots  \times  L^{np}(\mathbb{T}) \rightarrow L^{p}(\mathbb{T}) \Vert. 
\end{equation}
We let $f \in \Re \mathcal{A}(\mathbb{T})^\circ$. We have  {$H =  -i (T - 2 {\rm Id} )$} with $T =  M_{\chi_{\geq 0}}$  and therefore
\begin{equation}\label{Eqn=OrderOne}
\begin{split}
 \Vert H(f T( f T(f  \ldots T(f T(f)) \ldots  ) ) ) \Vert_p \approx_{\mathcal{O}(p^{n-1})} & \Vert H(f H( f T(f   \ldots T(f T(f)) \ldots ) ) )  \Vert_p \\
\approx_{\mathcal{O}(p^{n-1})}  &  \Vert H(f H( f H(f   \ldots T(f T(f)) \ldots  ) ) ) \Vert_p \\
\approx_{\mathcal{O}(p^{n-1})}  & \ldots \\
  \approx_{\mathcal{O}(p^{n-1})}  & \Vert H(f H( f H(f   \ldots H(f H(f)) \ldots ) ) ) \Vert_p. 
  \end{split}
\end{equation}
Now by Proposition \ref{Prop=DifferenceOpnminusone} we have   $H( f H(f)^{k-1}) \approx_{\mathcal{O}(p^{k-1})} H(f)^{k}$ and applying this repeatedly yields 
\begin{equation}\label{Eqn=OrderTwo}
\begin{split}
\Vert H(f H( f H(f   \ldots H(f H(f)) \ldots ) ) )  \Vert_p   \approx_{\mathcal{O}(p^{n-1})} & \Vert H(f H( f H(f   \ldots H(f)H(f) \ldots  ) ) ) \Vert_p \\
    \approx_{\mathcal{O}(p^{n-1})}  & \ldots \\
    \approx_{\mathcal{O}(p^{n-1})} &  \Vert H(f) \ldots H(f) \Vert_p \\
     = \qquad \quad  & \Vert H(f)^n \Vert_p = \Vert H(f) \Vert_{np}^{n}.
  \end{split}
\end{equation}
Combining \eqref{Eqn=OrderOne} and \eqref{Eqn=OrderTwo} yields 
\[
\Vert  M_{ \widetilde{\alpha}  }(f, \ldots, f)  \Vert_p =  \Vert  H(f T( f T(f ( \ldots T(f) \ldots) ) ) ) \Vert_{p}    
 \approx_{\mathcal{O}(p^{n-1})}   \Vert H(f) \Vert_{np}^{n}.
\]
Taking the supremum over all $f \in \Re \mathcal{A}(\mathbb{T})^\circ$ with $\Vert f \Vert_{np} = 1$ 
gives
\[
\Vert M_{\widetilde{\alpha}  }: L^{np}(\mathbb{T}) \times \ldots  \times  L^{np}(\mathbb{T}) \rightarrow L^{p}(\mathbb{T}) \Vert  \approx_{\mathcal{O}(p^{n-1})}    p^n. 
\]
Together with the estimates \eqref{Eqn=LimitLeeuw}, \eqref{Eqn=TransferFourierSchur}, \eqref{Eqn=AlphaBeta} we conclude the proof.

\end{proof}

\end{document}